\title{Finiteness properties of affine difference algebraic groups}
\author{Michael Wibmer\thanks{The author was supported by the NSF grants DMS-1760212, DMS-1760413, DMS-1760448 and the Lise Meitner grant M-2582-N32 of the Austrian Science Fund FWF.}}
\newtheorem{theo}{Theorem}[section]
\newtheorem{lemma}[theo]{Lemma}
\newtheorem{prop}[theo]{Proposition}
\newtheorem{cor}[theo]{Corollary}
\newtheorem{defi}[theo]{Definition}
\newtheorem{rem}[theo]{Remark}
\newtheorem{conj}[theo]{Conjecture}
\newtheorem*{theoremA}{Theorem A}
\newtheorem*{conjB}{Conjecture B}
\theoremstyle{definition}
\newtheorem{ex}[theo]{Example}
\newcommand{\f}{\phi}
\newcommand{\ida}{\mathfrak{a}}
\newcommand{\idb}{\mathfrak{b}}
\newcommand{\p}{\mathfrak{p}}
\newcommand{\q}{\mathfrak{q}}
\newcommand{\m}{\mathfrak{m}}
\newcommand{\spec}{\operatorname{Spec}}
\newcommand{\G}{\mathcal{G}}
\newcommand{\Gl}{\operatorname{GL}}
\newcommand{\Hom}{\operatorname{Hom}}
\newcommand{\trdeg}{\operatorname{trdeg}}
\newcommand{\strdeg}{\sigma\text{-}\operatorname{trdeg}}
\newcommand{\V}{\mathbb{V}}
\newcommand{\id}{\operatorname{id}}
\newcommand{\sdim}{\sigma\text{-}\dim}
\newcommand{\ld}{\operatorname{ld}}
\newcommand{\ksalg}{k\text{-}\sigma\text{-}\mathsf{Alg}}
\newcommand{\alg}{\mathsf{Alg}}
\newcommand{\ord}{\operatorname{ord}}
\newcommand{\set}{\mathsf{Sets}}
\newcommand{\X}{\mathcal{X}}
\newcommand{\Y}{\mathcal{Y}}
\newcommand{\s}{\sigma}
\newcommand{\de}{\delta}
\newcommand{\groups}{\mathsf{Groups}}
\newcommand{\nn}{\mathbb{N}}
\newcommand{\C}{\mathbb{C}}
\newcommand{\I}{\mathbb{I}}
\newcommand{\hs}{{{}^\sigma\!}}
\newcommand{\VV}{\mathcal{V}}
\newcommand{\ks}{$k$-$\s$}
\newcommand{\Gm}{\mathbb{G}_m}
\newcommand{\iar}{\hookrightarrow}
\newcommand{\Ga}{\mathbb{G}_a}
\def\Sl{\operatorname{SL}}
\def\H{\mathcal{H}}
\def\wtilde{\widetilde}
\begin{document}

\maketitle

\begin{abstract} \let\thefootnote\relax\footnotetext{{\em Mathematics Subject Classification Codes:} 
		12H10, 
		16T05, 
		14L15, 
		14L17. 
		{\em Key words and phrases}:
		difference algebraic groups, difference algebraic geometry, difference Hopf algebras, differential Galois theory, difference algebraic relations.}
	We establish several finiteness properties of groups defined by algebraic difference equations.
	One of our main results is that a subgroup of the general linear group defined by possibly infinitely many algebraic difference equations in the matrix entries can indeed be defined by finitely many such equations. As an application, we show that the difference ideal of all difference algebraic relations among the solutions of a linear differential equation is finitely generated.
\end{abstract}




\section*{Introduction}

 Similar to the case of affine algebraic groups, affine difference algebraic groups can be realized as subgroups of general linear groups. However, the defining equations here are not simply polynomials in the matrix entries but difference polynomials, i.e., the defining equations involve a formal symbol $\s$ that has to be interpreted as a ring endomorphism. For example, if $G$ is the difference algebraic subgroup of $\Gl_n$ defined by the algebraic difference equations $X\s(X)^{\operatorname{T}}=\s(X)^{\operatorname{T}}X=I_n$ and $\s\colon\mathbb{C}\to\mathbb{C}$ is the complex conjugation map, then $G(\mathbb{C},\sigma)$ is the group of all complex unitary $n\times n$-matrices. 

Alternatively, affine difference algebraic groups can be described as affine group schemes with a certain additional structure (the difference structure). As schemes, they are typically not of finite type, but they enjoy a certain finiteness property with respect to the difference structure; they are ``of finite $\s$-type''.

A difference field is a field equipped with an endomorphism $\s$, a classical example is $\C(x)$ with $\s(f(x))=f(x+1)$. From an algebraic point of view, an affine difference algebraic group $G$ over a difference field $k$  corresponds to a finitely $\s$-generated \ks-Hopf algebra, i.e., a Hopf algebra $k\{G\}$ over $k$ together with a ring endomorphism $\s\colon k\{G\}\to k\{G\}$ that extends $\s\colon k\to k$. The structure maps of the Hopf algebra are required to commute with $\s$, and $k\{G\}$ is required to be finitely $\s$-generated over $k$, i.e., there exists a finite set $B\subseteq k\{G\}$ such that $B,\s(B),\s^2(B),\ldots$ generates $k\{G\}$ as a $k$-algebra.
A difference ideal $\ida$ of $k\{G\}$ (i.e., an ideal of $k\{G\}$ with $\s(\ida)\subseteq\ida$) is called finitely $\s$-generated if there exists a finite subset $F$ of $\ida$ such that $F,\s(F),\s^2(F),\ldots$ generates $\ida$ as an ideal.
For clarity, we state here our main finiteness results in the language of Hopf algebras.

\begin{theoremA}
	Let $k\{G\}$ be a finitely $\s$-generated \ks-Hopf algebra. Then,
	\begin{enumerate}
		\item every difference ideal of $k\{G\}$ that is a Hopf ideal is finitely $\s$-generated,
		\item every \ks-Hopf subalgebra of $k\{G\}$ is finitely $\s$-generated,
		\item there are only finitely many minimal prime difference ideals in $k\{G\}$.
	\end{enumerate}	
\end{theoremA}

In fact, we prove a very useful stronger version of (i). For a difference algebraic subgroup $G$ of $\Gl_n$, this stronger version takes the following form: For $i\gg 0$ the ideal $\I(G)[i]$ of all difference polynomials in the matrix entries of order at most $i$ that vanish on $G$, is generated by $\I(G)[i-1]$ and $\s(\I(G)[i-1])$.


A difference ideal in a finitely $\s$-generated \ks-algebra need not be finitely $\s$-generated. A well-known counterexample is the difference ideal generated by $y\s(y),y\s^2(y),\ldots$. 
The classical basis theorem in difference algebra (\cite[Theorem 2.5.11]{Levin:difference}) only applies to so-called perfect difference ideals: Every ascending chain of perfect difference ideals in a finitely $\s$-generated \ks-algebra is finite. Recall that a difference ideal $\ida$ is perfect if $f\sigma(f)\in\ida$ implies $f$ in $\ida$. An interesting generalization of the classical basis theorem was suggested by E. Hrushovski in his seminal work \cite{Hrushovski:elementarytheoryoffrobenius}:

\begin{conjB}
	Every ascending chain of radical well-mixed difference ideals in a finitely $\s$-generated \ks-algebra is finite.
\end{conjB}

Recall that a difference ideal $\ida$ is well-mixed if $fg\in \ida$ implies $f\s(g)\in\ida$. In \cite{Levin:OnTheAscendingChainCondition} it is shown that the above conjecture fails if the assumption that the ideals are radical is dropped. On the positive side, E. Hrushovski proved the conjecture under certain addition assumptions (\cite[Lemma 4.35]{Hrushovski:elementarytheoryoffrobenius}). Moreover, the conjecture (appropriately reformulated) has been proved for monomial and binomial difference ideals (\cite{Wang:MonomialDifferenceIdeals},\cite{Wang:FiniteBasisForRadicalWellmixedDifferenceIdeals}).

We show that the Conjecture B is equivalent to: Every finitely $\s$-generated \ks-algebra has only finitely many minimal prime difference ideals. Thus, point (iii) of Theorem A can be seen as providing another positive special case of Conjecture B.

\medskip

Difference algebraic groups are the discrete analog of differential algebraic groups, i.e., groups defined by algebraic differential equations. Differential algebraic groups have always played an important role in differential algebra (see, e.g., \cite{Cassidy:differentialalgebraicgroups}, \cite{Sit:DifferentialAlgebraicSubgroupsofSL2}, \cite{Cassidy:TheDifferentialRationalRepresentationAlgebraOnALinearDifferentialAlgebraicGroup}, \cite{Cassidy:UnipotentDifferentialAlgebraicGroups}, \cite{Kolchin:differentialalgebraicgroups}, \cite{Cassidy:TheClassificationOfTheSemisimpleDifferentialAlgebraicGroups} \cite{Buium:DifferntialAlgebraicGroupsOfFiniteDimension}, \cite{Buium:GeometryOfDifferentialPolynomialFunctionsIAgebraicGroups}) and are also presently an active area (see, e.g., \cite{Pillay:SomeFoundationalQuestionsConcerningDifferentialAlgebraicGroups}, \cite{KowalskiPillay:ProalgebraicAndDifferentialAlgebraicGroupStructuresOnAffineSpaces}, \cite{Ovchinnikov:DifferentialTannakianCategories}, \cite{CassidySinger:AJordanHoelderTheoremForDifferentialAlgebraicGroups}, \cite{MinchenkiOvchinnikov:ZariskiClosuresOfReductiveLinearDifferentialAlgebraiGroups}, \cite{Kamensky:TannakianFormalismOverFieldsWithOperators} \cite{MinchenkoOvchinnikov:ExtensionsOfDifferentialRepresentationsOfSl2andTori}, \cite{Freitag:IndecomposabilityForDifferentialAlgebraicGroups}, \cite{Minchenko:OnCentralExtensionsOfSimpleDifferentialAlgebraicGroups},\cite{Anand:ThePicardVessiotTheoryConstrainedCohomologyandLinearDifferentialAlgebriacGroups},\cite{MinchenkoOvchinnikov:TrivialityOfDifferentialGaloisCohomologyOfLinearDifferentialAlgebraicGroups},\cite{ChatzidakisPillay:GeneralizedPicardVessiotExtensionsAndDifferentialGaloisCohomology}.) See also \cite{Malgrange:DifferentialAlgebraicGroups} for a more geometric approach to differential algebraic groups and \cite{Buium:DifferentialSubgroupsOfSimpleAlgebraicGroupsOverPadicFields} for an arithmetic analog of difference/differential algebraic groups. Moreover, Galois theories for differential or difference equations where the Galois groups are differential algebraic groups (\cite{Pillay:DifferentialGaloisTheory1}, \cite{Landesman:GeneralizedDifferentialGaloisTheory}, \cite{CassidySinger:GaloisTheoryofParameterizedDifferentialEquations}, \cite{HardouinSinger:DifferentialGaloisTheoryofLinearDifferenceEquations}) have recently given a new impetus to the study of differential algebraic groups (see, e.g. \cite{MinchenkoOvchinnikovSinger:ReductiveLinearDifferentialAlgebraicGroupsAndTheGaloisGroups}, \cite{MinchenkoOvchinnikovSinger:UnipotentDifferentialAlgebraicGroupsAsParameterizedDifferentialGaloisGroups}, \cite{GorchinskyOvchinnikov:IsomonodromicDifferentialEquationsAndDifferentialCategories},\cite{HardouinMinchenkoOvchinnikov:CalculatingDifferentialGaloisGroupsOfParameterizedDifferentialEqautionsWithApplications}).

In contrast to the situation in differential algebra, difference algebraic groups have long played no role at all in difference algebra. The author can only speculate why. One reason might be that the traditional definition of a difference variety (see \cite[Chapter 4]{Cohn:difference}) is not practical for studying groups. 

Around the turn of the century a considerable interest in the model theory of difference fields emerged. (See e.g.,  \cite{Macintyre:GenricAutomorphismsOfFields}, \cite{ChatzidakisHrushovski:ModelTheoryOfDifferenceFields}, \cite{ChatzidakiHrushovskiPeterzil:ModelTheoryofDifferenceFieldsIIPeriodicIdelas}.) Groups definable in ACFA, the model companion of the theory of difference fields, played a crucial role in remarkable applications of model theory to number theory, especially regarding the Manin-Mumford conjecture. See \cite{Hrushovski:TheManinMumfordConjectureAndTheModelTheorOfDifferenceFields},
\cite{Bouscaren:TheorieModesEtManinMumford}, \cite{Chatzidakis:DifferencefieldsModelTheoryAndApplicationsToNumberTheory},
\cite{Chatzidakis:GroupsDefinableInACFA}, \cite{Scanlon:APositiveCharacteristicManinMumfordTheorem}, \cite{Scanlon:LocalAndreOrtConjectureForTheUniversalAbelianVariety}, \cite{Scanlon:DifferenceAlgebraicSubgroupsOfCommutativeAlgebraicGroups}, \cite{KowalskiPillay:OnAlgebraicSigmaGroups}.
Groups definable in ACFA have further been studied in \cite{KowalskiPillay:ANoteonGroupsDefinableInDifferenceFields} and
\cite{ChatzidakisHrushovski:OnSubgroupsOfSemiabelianVarietiesDefinedByDifferenceEquations}.
%

 Categories of representations of affine difference algebraic groups have been studied in \cite{Kamensky:TannakianFormalismOverFieldsWithOperators} and \cite{OvchinnikovWibmer:TannakianCategoriesWithSemigroupActions}.  A cohomology theory for such representations and more general actions has been developed in \cite{ChalupnikKowalski:DifferenceModulesAndDifferenceCohomology}. Furthermore, torsors for affine difference algebraic groups are studied in \cite{BachmayrWibmer:TorsorsForDifferenceAlgebraicGroups}. 

\medskip

Understanding the relations among the solutions of a given linear differential equation is a classical and important problem. Often the interesting relations among the solutions are not simply algebraic relations but involve transformations of the variable or the parameters of the differential equation.
For example, consider Bessel's differential equation

$$
x^2y''+xy'+(x^2-\alpha^2)y=0.
$$
The solution $J_\alpha(x)$, the Bessel function of the first kind, satisfies the well-known linear recurrence relation

$$
xJ_{\alpha+2}(x)-2(\alpha+1)J_{\alpha+1}(x)+xJ_\alpha(x)=0.
$$
This is a \emph{difference algebraic relation} with respect to the transformation \mbox{$\alpha\mapsto\alpha+1$} on the parameter $\alpha$. Many classical functional identities can be interpreted as difference algebraic relations among solutions of linear differential equations.  For example, the relation 
$$
\cos(2x)=2\cos^2(x)-1
$$
is a \emph{difference algebraic relation} with respect to the transformation \mbox{$x\mapsto 2x$} on the variable $x$, satisfied by the solution $\cos(x)$ of $y''+y=0$.

A Galois theory for linear differential equations that is able to handle this kind of difference algebraic relations has been introduced in \cite{DiVizioHardouinWibmer:DifferenceGaloisTheoryOfLinearDifferentialEquations}. There is also a similar Galois theory for linear difference equations (\cite{OvchinnikovWibmer:SGaloisTheoryOfLinearDifferenceEquations}). The Galois groups in these Galois theories are affine difference algebraic groups and they measure the difference algebraic relations among the solutions. 

Based on this Galois theory, we will show, as an application of point (i) of Theorem A, that, for a given linear differential equation, there exists a \emph{finite} set $F$ of difference algebraic relations among the solutions such that any other difference algebraic relation is a consequence of the difference algebraic relations in $F$. Or, formulated more algebraically, we show that the difference ideal of all difference algebraic relations among the solutions of a linear differential equation is finitely generated.

As illustrated in \cite{DiVizioHardouinWibmer:DifferenceAlgebraicRel} and \cite{OvchinnikovWibmer:SGaloisTheoryOfLinearDifferenceEquations}, the Galois theories described above make it possible to use structure results about affine difference algebraic groups to analyze and classify the possible difference algebraic relations among the solutions of certain linear differential and difference equations. In this respect, the understanding of the Zariski dense difference algebraic subgroups of a given affine algebraic group is highly relevant. For example, some understanding of the Zariski dense difference closed subgroups of $\operatorname{SL}_2$, originating from \cite{ChatzidakiHrushovskiPeterzil:ModelTheoryofDifferenceFieldsIIPeriodicIdelas}, is a key ingredient in proving that any two linearly independent solutions of the Airy equation are difference algebraically independent (\cite[Corollary 6.10]{DiVizioHardouinWibmer:DifferenceAlgebraicRel}). 


Despite the recent works on difference algebraic groups and their applications to other areas outlined above, the theory of difference algebraic groups is still in its infancy and far from a level comparable to the state of the art of the theory of differential algebraic groups. The purpose of this article is therefore also to lay the groundwork for a further comprehensive study of affine difference algebraic groups\footnote{Some steps in this direction are already contained in the authors habilitation thesis (\cite{Wibmer:Habil}), which encompasses the first six sections of this article.}. In particular, we introduce three basic numerical invariants for affine difference algebraic groups: the difference dimension, the order and the limit degree. These are in fact the three standard numerical invariants in difference algebra. However, in the standard textbooks \cite{Cohn:difference} and \cite{Levin:difference} these are defined in terms of extensions of difference fields and it is not obvious how to generalize the definitions to difference Hopf algebras. Moreover, we establish a dimension theorem for affine difference algebraic groups: If $H_1$ and $H_2$ are difference algebraic subgroups of an affine difference algebraic group $G$, then $\sdim(H_1\cap H_2)\geq \sdim(H_1)+\sdim(H_2)-\sdim(G)$. Interestingly, a similar statement fails for arbitrary difference varieties (\cite[Chapter 8, Section 8]{Cohn:difference}). In regards to further developments in the theory of affine difference algebraic groups, we note that (ii) of Theorem~A is crucial for establishing the existence of quotients (by normal subgroups) in the category of affine difference algebraic groups.

\medskip

We conclude this introduction with an outline of the article: In Section \ref{Section:Preliminaries} we recall the necessary definitions from difference algebra and difference algebraic geometry. Then, in Section~\ref{sec: Difference algebraic groups: Examples and linearity } we introduce affine difference algebraic groups, present several examples and show that every affine difference algebraic group is isomorphic to a difference algebraic subgroup of some general linear group. Section \ref{sec: Zariski closures of difference algebraic groups and the growth group} is the technical heart of this article. We show that the growth of the Zariski closures of a difference algebraic subgroup $G$ of an affine algebraic group is governed by an algebraic group that we term the \emph{growth group}. The difference dimension of $G$ equals the dimension of the growth group and if the latter is zero, the size of the growth group equals the limit degree of $G$. 
 In Section \ref{sec: two finiteness theorems} we prove (i) (including the strong version) and (ii) of Theorem~A.
As an application of the strong version of (i) we prove the dimension theorem for affine difference algebraic groups.
In Section \ref{sec: limit degree} we introduce the limit degree of an affine difference algebraic group and relate our work to \cite{KowalskiPillay:OnAlgebraicSigmaGroups} by showing that the category of affine difference algebraic groups of limit degree one is equivalent to the category of affine algebraic $\s$-groups studied in \cite{KowalskiPillay:OnAlgebraicSigmaGroups}.
Then in Section \ref{section: components}, we discuss a conjecture equivalent to Conjecture B and we prove (iii) of Theorem A. These questions are naturally related to the study of the connected components of an affine difference algebraic group. Finally, in the last section, we present the application of (i) of Theorem A to linear differential equations.

\section{Preliminaries} \label{Section:Preliminaries}

In this section we recall some basic definitions from difference algebra and we define fundamental geometric concepts like difference varieties.

All rings are assumed to be commutative and unital. $\nn=\{0,1,2,\ldots\}$ and $\s^0$ is understood to be the identity map. By an ``algebraic group'' we mean an affine group scheme of finite type over a field. In particular, in positive characteristic, an algebraic group need not be reduced. A morphism $\G\to\H$ of algebraic groups is a \emph{quotient map} if it is faithfully flat (\cite[Def.~ 5.5]{Milne:AlgebraicGroupsTheTheoryOfGroupSchemesOfFiniteTypeOverAField}). Equivalently, the dual map $k[\H]\to k[\G]$ is injective. Quotient maps are the appropriate scheme theoretic analog of surjective morphisms of smooth algebraic groups. (See \cite[Chapter~5]{Milne:AlgebraicGroupsTheTheoryOfGroupSchemesOfFiniteTypeOverAField} for more background.)

\subsection{Difference algebra}

Standard references for difference algebra are \cite{Levin:difference} and \cite{Cohn:difference}.
A \emph{difference ring} (or \emph{$\s$-ring} for short) is a ring $R$  together with a ring endomorphism $\s\colon R\to R$. If $\s$ is an automorphism, then $R$ is called \emph{inversive}. It is customary to use the same symbol $\s$ for diverse endomorphisms.
A \emph{morphism of $\s$-rings} $R$ and $S$ is a morphism $\psi\colon R\to S$ of rings such that
\[
\xymatrix{
	R \ar_{\s}[d] \ar^-{\psi}[r] & S  \ar^{\s}[d] \\
	R \ar^-{\psi}[r] & S \\
}
\]
commutes.
%
If $R$ and $S$ are $\s$-rings such that $R$ is a subring of $S$ and the inclusion map is a morphism of $\s$-rings, then $R$ is a \emph{$\s$-subring} of $S$. 
A \emph{difference field}  (or \emph{$\s$-field} for short) is a difference ring whose underlying ring is a field.
%

Let $k$ be a difference ring. A morphism of difference rings $k\to R$ is also called a \emph{\mbox{\ks-algebra}}. \index{\ks-algebra}A \emph{morphism of \ks-algebras} is a morphism of $k$-algebras that is also a morphism of $\s$-rings.
The category of \ks-algebras is denoted by
$\ksalg$ and $\Hom(R,S)$ denotes the set of morphisms of \ks-algebras from $R$ to $S$.

For \ks-algebras $R$ and $S$ the $k$-algebra $R\otimes_k S$ is naturally a $\s$-ring by $\s(r\otimes s)=\s(r)\otimes\s(s)$ for $r\in R$ and $s\in S$. Indeed, $R\otimes_k S$ is the coproduct of $R$ and $S$ in the category of \ks-algebras.

A \emph{\ks-subalgebra} of a \ks-algebra $R$ is a $k$-subalgebra of $R$ that is also $\s$-subring of $R$. For a subset $F\subseteq R$, the smallest \ks-subalgebra of $R$ that contains $F$ is denoted by $k\{F\}.$
It is called the \emph{\ks-subalgebra of $R$ $\s$-generated by $F$} (over $k$). As a $k$-algebra, $k\{F\}$ is generated by all elements of the form $\s^i(f)$ with $i\in\nn$ and $f\in F$. A \ks-algebra $R$ is called \emph{finitely $\s$-generated} (over $k$) if there exists a finite subset $F\subseteq R$ such that $R=k\{F\}$.

The \emph{ring of $\s$-polynomials over a $\s$-ring $k$ in the $\s$-variables $y_1,\ldots,y_n$} is the polynomial ring
$$k\{y\}=k\{y_1,\ldots,y_n\}=k[y_1,\ldots,y_n,\s(y_1),\ldots,\s(y_n),\s^2(y_1),\ldots,\s^2(y_n),\ldots]$$
in the variables $y_1,\ldots,y_n,\s(y_1),\ldots,\s(y_n),\ldots$ over $k$. It is naturally a \ks-algebra with $\s\colon k\{y\}\to k\{y\}$ extended from $\s\colon k\to k$ as suggested by the naming of the variables.
If $R$ is a \ks-algebra, $f\in k\{y\}$ a $\s$-polynomial and $x=(x_1,\ldots,x_n)\in R^n$, the element $f(x)\in R$ is obtained by substituting $\s^i(x_j)$ for $\s^i(y_j)$. If $f(x)=0$, then $x$ is a \emph{solution} of $f$.

%

An ideal $\ida$ of a $\s$-ring $R$ is called a \emph{difference ideal} \index{difference ideal} (or \emph{$\s$-ideal} \index{$\s$-ideal} for short) if $\s(\ida)\subseteq \ida$. In this case $R/\ida$ naturally inherits the structure of a $\s$-ring such that the canonical map $R\to R/\ida$ is a morphism of $\s$-rings.
%
%
Let $F$ be a subset of $\s$-ring $R$. The smallest $\s$-ideal of $R$ containing $F$ is denoted by
$[F]$
and called the \emph{$\s$-ideal $\s$-generated by $F$} \index{$\s$-ideal $\s$-generated by $F$}.
As an ideal $[F]$ is generated by all elements of $R$ that are of the form $\s^i(f)$ with $i\in\nn$ and $f\in F$.
A $\s$-ideal $\ida$ of a $\s$-ring $R$ is \emph{finitely $\s$-generated} if $\ida=[F]$ for a finite subset $F\subseteq\ida$.

%
%

\subsection{Difference algebraic geometry} 

The various frameworks for algebraic geometry (e.g., Weil-style algebraic geometry, schemes, etc.) all have difference analogs. While the standard references \cite{Levin:difference} and \cite{Cohn:difference}
follow the style of Weil, in recent years, most authors follow a more utilitarian perspective and choose the geometric framework that is best suited for their problem at hand.
It is well recognized that a functorial-schematic approach to algebraic groups has its benefits (\cite{Grothendieck:SGA3_1},  \cite{DemazureGabriel:GroupesAlgebriques}, \cite{Waterhouse:IntroductiontoAffineGroupSchemes}, \cite{Jantzen:RepresentationsOfAlgebraicGroups}, \cite{Milne:AlgebraicGroupsTheTheoryOfGroupSchemesOfFiniteTypeOverAField}) and this is the strategy we will follow here. In other words, our approach to difference algebraic geometry is a natural adaptation of the approach of the above references from algebra to difference algebra, i.e., from algebraic equations to difference algebraic equations. Our approach is identical or equivalent to the approaches in \cite{BachmayrWibmer:TorsorsForDifferenceAlgebraicGroups}, \cite{Kamensky:TannakianFormalismOverFieldsWithOperators}, \cite{OvchinnikovWibmer:TannakianCategoriesWithSemigroupActions}, \cite{MoosaScanlon:GeneralizedHasseSchmidtVarietiesAndTheirJetSpaces}, \cite{DiVizioHardouinWibmer:DifferenceGaloisTheoryOfLinearDifferentialEquations} and \cite{ChalupnikKowalski:DifferenceModulesAndDifferenceCohomology}. However, it differs (in a more than formal way) from the approaches in \cite{Hrushovski:elementarytheoryoffrobenius}, \cite{Levin:difference} and \cite{ChatzidakisHrushovski:ModelTheoryOfDifferenceFields}.
%
%
%
%
%

Throughout the article we work over a fixed $\s$-ring $k$. We are mainly interested in the case that $k$ is a difference field. 

Essentially, a \emph{difference variety} (over $k$) is the set of solutions of a system of algebraic difference equations (over $k$). Let us make this more precise:
Let $R$ be a $k$-$\s$-algebra and $F\subseteq k\{y\}=k\{y_1,\ldots,y_n\}$ a set of $\s$-polynomials over $k$. Then we may consider the $R$-rational solutions of $F$, that is
$$\V_R(F)=\{ x\in R^n|\ f(x)=0 \text{ for all } f\in F\}.$$
Note that $R\rightsquigarrow\V_R(F)$ is naturally a functor from $k$-$\s$-$\alg$ to $\set$, the category of sets. We denote this functor by $\V(F)$.

\begin{defi} \label{defi: svariety}
	Let $k$ be a $\s$-ring. A \emph{difference variety} (or \emph{$\s$-variety}\index{$\s$-variety} for short) over $k$ is a functor from $k$-$\s$-$\alg$ to $\set$, that is of the form $\V(F)$, for some $n\geq 1$ and $F\subseteq k\{y_1,\ldots,y_n\}$. A morphism of $\s$-varieties is a morphism of functors.
\end{defi}
It would be more accurate to add the word ``affine'' into Definition \ref{defi: svariety}. However, to avoid endless iterations of the word ``affine'' we choose not to do so.
%

Let $F,G\subseteq k\{y\}$ and $X=\V(F)$, $Y=\V(G)$. If $F\subseteq G$, then $Y(R)\subseteq X(R)$ for every $k$-$\s$-algebra $R$ and $Y$ is a subfunctor of $X$. In this situation, $Y$ is called  a \emph{$\s$-closed $\s$-subvariety}\index{$\s$-closed $\s$-subvariety} of $X$ and write $Y\subseteq X$.


Let $X=\V(F)$ be a $\s$-variety, where $F\subseteq k\{y\}=k\{y_1,\ldots,y_n\}$. Then
\begin{equation} \label{eqn: I(X)}
\I(X)=\{f\in k\{y\}| \ f(x)=0 \ \text{ for all $k$-$\s$-algebras $R$ and all } x\in X(R) \}
\end{equation}
is a $\s$-ideal of $k\{y\}$. The $k$-$\s$-algebra
$$k\{X\}=k\{y\}/\I(X)$$
is called the \emph{coordinate ring}\index{coordinate ring} of $X$. As we may choose $R=k\{y\}/[F]$ in (\ref{eqn: I(X)}), we see that $\I(X)=[F]\subseteq k\{y\}$.

Let $R$ be a $k$-$\s$-algebra and let $\overline{y}=(\overline{y_1},\ldots,\overline{y_n})\in k\{X\}^n$ denote the images of $y_1,\ldots,y_n$ in $k\{X\}$. The bijection
$$\Hom(k\{X\},R)\to X(R)$$
thats maps a morphism $\psi\colon k\{X\}\to R$ of $k$-$\s$-algebras to $\psi(\overline{y})$ is functorial in $R$. Thus the functor $X$ is represented by $k\{X\}$. Conversely, since every finitely $\s$-generated \mbox{$k$-$\s$-algebra} can be written in the form $k\{y\}/[F]$, we see that a functor from $k$-$\s$-$\alg$ to $\set$ that is representable by a finitely $\s$-generated $k$-$\s$-algebra is isomorphic (as a functor) to a \mbox{$\s$-variety}.

In the sequel we will allow ourselves the little abuse of notation to also call a functor isomorphic to a $\s$-variety a $\s$-variety.
In particular, we will often identify $X$ with the functor $\Hom(k\{X\},-)$.
Thus a functor $X$ from $k$-$\s$-$\alg$ to $\set$ is a $\s$-variety if and only if it is representable by a finitely $\s$-generated $k$-$\s$-algebra $k\{X\}$. It is then clear from the Yoneda lemma that:

\begin{rem} \label{rem: equivalence of categories for svarieties}
	The category of $\s$-varieties over $k$ is anti-equivalent to the category of finitely $\s$-generated $k$-$\s$-algebras.
\end{rem}

If $\f\colon X\to Y$ is a morphism of $\s$-varieties over $k$, the dual morphism of $k$-$\s$-algebras is denoted by
$\f^*\colon k\{Y\}\to k\{X\}.$
As the tensor product is the coproduct in the category of \ks-algebras it follows that:

\begin{rem} \label{rem: products}
	The category of $\s$-varieties has products. Indeed, if $X$ and $Y$ are \mbox{$\s$-varieties} over $k$, then $k\{X\times Y\}=k\{X\}\otimes_k k\{Y\}$. Moreover, there is a terminal object, namely, the functor represented by the $k$-$\s$-algebra $k$.
\end{rem}

In other words, the functor $R\rightsquigarrow(X\times Y)(R)=X(R)\times Y(R)$ is represented by $k\{X\}\otimes_k k\{Y\}$.

%


Let $Y$ be a $\s$-variety and $f\in k\{Y\}$. Then, for any $k$-$\s$-algebra $R$, we have a well-defined map $f\colon Y(R)\to R$ given by evaluating a representative of $f$ in $k\{y_1,\ldots,y_n\}$ at an element of $Y(R)\subseteq R^n$. In a coordinate free manner $f\colon Y(R)\to R$ can be described as the map that sends $\psi\in Y(R)=\Hom(k\{Y\},R)$ to $\psi(f)\in R$.

Let $X$ be a $\s$-closed $\s$-subvariety of $Y$. Then
\begin{equation} \label{eqn: I(X)2}
\I(X)=\{f\in k\{Y\}| \ f(x)=0 \ \text{ for all $k$-$\s$-algebras $R$ and all } x\in X(R) \}
\end{equation}
is a $\s$-ideal of $k\{Y\}$. We call $\I(X)\subseteq k\{Y\}$ the \emph{defining ideal of $X$}\index{defining ideal of a $\s$-variety} (in $k\{Y\}$).
This notation is consistent with (\ref{eqn: I(X)}) in the sense that $\I(X)$ as defined in (\ref{eqn: I(X)}) is the defining ideal of $X$ in $k\{y\}=k\{y_1,\ldots,y_n\}$. Moreover, $\I(X)\subseteq k\{Y\}$ agrees with the image in $k\{Y\}=k\{y\}/\I(Y)$ of the defining ideal of $X$ in $k\{y\}$. So $k\{X\}=k\{Y\}/\I(X)$.

Conversely, let $F\subseteq k\{Y\}$. Then we can define a $\s$-closed $\s$-subvariety $\V(F)$ of $Y$ by
$$\V(F)(R)=\{x\in Y(R)|\ f(x)=0 \text{ for all } f\in F\}$$
for any $k$-$\s$-algebra $R$.

\begin{lemma} \label{lemma: correspondence sideal ssubvarriety}
	Let $Y$ be a $\s$-variety. Then $\I$ and $\V$ are mutually inverse bijections between the set of $\s$-closed $\s$-subvarieties of $Y$ and the set of $\s$-ideals of $k\{Y\}$.
\end{lemma}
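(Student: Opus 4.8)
The plan is to verify that the two composites $\V\circ\I$ and $\I\circ\V$ are the identity, and both verifications rest on the representability of $\s$-varieties together with the identification $k\{X\}=k\{Y\}/\I(X)$ recorded just above the statement.

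First I would show that $\V(\I(X))=X$ for every $\s$-closed $\s$-subvariety $X$ of $Y$. The inclusion $X\subset\V(\I(X))$ is immediate from the definition of $\I(X)$: for any $k$-$\s$-algebra $R$, every $a\in X(R)$ satisfies $f(a)=0$ for all $f\in\I(X)$, so $a\in\V(\I(X))(R)$. For the reverse inclusion, fix a point $a\in\V(\I(X))(R)\subset Y(R)$. Under the bijection $Y(R)=\Hom(k\{Y\},R)$ the point $a$ corresponds to a morphism $\psi\colon k\{Y\}\to R$ of $k$-$\s$-algebras, and the condition $a\in\V(\I(X))(R)$ says precisely that $\I(X)\subset\ker(\psi)$. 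Hence $\psi$ factors through the quotient $k\{X\}=k\{Y\}/\I(X)$, yielding a morphism $k\{X\}\to R$, i.e.\ a point of $X(R)$ whose image in $Y(R)$ is $a$. Identifying $X$ with its image subfunctor, this gives $a\in X(R)$.

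For the opposite composite I would show that $\I(\V(\ida))=\ida$ for every $\s$-ideal $\ida$ of $k\{Y\}$. The inclusion $\ida\subset\I(\V(\ida))$ holds by construction, since every element of $\ida$ vanishes on $\V(\ida)$ by the very definition of $\V(\ida)$. The reverse inclusion is the crux, and it is obtained by evaluating at the tautological point. Let $R=k\{Y\}/\ida$, which is a $k$-$\s$-algebra because $\ida$ is a $\s$-ideal, and let $\overline{y}\in Y(R)$ be the point corresponding to the canonical projection $\pi\colon k\{Y\}\to R$. Since $\pi(g)=0$ for every $g\in\ida$, the point $\overline{y}$ lies in $\V(\ida)(R)$. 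Now if $f\in\I(\V(\ida))$, then $f$ vanishes on every point of $\V(\ida)$; in particular, using the coordinate-free description of evaluation, $f(\overline{y})=\pi(f)=0$, which means $f\in\ida$.

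I expect the only genuine subtlety to be bookkeeping rather than any difference-algebraic obstruction: one must consistently identify the subfunctor $X$ with its image inside $Y$ and keep careful track of the correspondence between points $a\in Y(R)$ and algebra morphisms $\psi\colon k\{Y\}\to R$. The $\s$-structure itself plays no active role in the argument beyond guaranteeing that $\I(X)$ is a $\s$-ideal and that $\V(\ida)$ is again a $\s$-closed $\s$-subvariety, both of which are established before the statement. In essence the lemma is the functor-of-points correspondence between closed subobjects and (difference) ideals, with the tautological point $\overline{y}\in\V(\ida)(k\{Y\}/\ida)$ doing the decisive work.
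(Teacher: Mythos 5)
Your proof is correct and follows essentially the same route as the paper: the decisive step in both is evaluating at the tautological point over $R=k\{Y\}/\ida$ to get $\I(\V(\ida))=\ida$. The only cosmetic difference is in the other composite, where the paper deduces $\V(\I(X))=X$ by writing $X=\V(\ida)$ and invoking the first identity, while you argue directly via the factorization of $\psi\colon k\{Y\}\to R$ through $k\{X\}=k\{Y\}/\I(X)$; both are immediate given what is established before the lemma.
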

\begin{proof}
	Let $\ida$ be a $\s$-ideal of $k\{Y\}$. Clearly $\ida\subseteq\I(\V(\ida))$. Since we may choose $R=k\{Y\}/\ida$ in (\ref{eqn: I(X)2}) it follows that $\ida=\I(\V(\ida))$.
	
	Let $X$ be $\s$-closed $\s$-subvariety of $Y$. Then $X=\V(\ida)$ for some $\s$-ideal $\ida$ of $k\{Y\}$. So $\V(\I(X))=\V(\I(\V(\ida)))=\V(\ida)=X$.
\end{proof}

Note that if $X$ is a $\s$-closed $\s$-subvariety of $Y$ and $R$ a $k$-$\s$-algebra, then $X(R)\subseteq Y(R)$ corresponds to $\{\psi\in\Hom(k\{Y\},R)|\ \I(X)\subseteq\ker(\psi)\}\subseteq\Hom(k\{Y\},R)$.
%
If $Y$ and $Z$ are $\s$-closed $\s$-subvarieties of a $\s$-variety $X$, then we can define a subfunctor
$Y\cap Z$
of $X$ by $R\rightsquigarrow Y(R)\cap Z(R)$. Then $Y\cap Z$ is a $\s$-closed $\s$-subvariety of $X$, indeed, $\I(Y\cap Z)\subseteq k\{X\}$ is the ideal generated by $\I(Y)$ and $\I(Z)$.
%
%
%
%
%
%

The following lemma defines the difference analog of the scheme theoretic image (\cite[Tag 01R5]{stacks-project}).
\begin{lemma} \label{lemma: f(X)}
	Let $\f\colon X\to Y$ be a morphism of $\s$-varieties. There exists a unique $\s$-closed $\s$-subvariety $\f(X)$ of $Y$ with the following property. The morphism $\f$ factors through $\f(X)$ and if $Z\subseteq Y$ is a $\s$-closed $\s$-subvariety such that $\f$ factors through $Z$ then $\f(X)\subseteq Z$.
\end{lemma}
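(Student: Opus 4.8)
The plan is to realize $\f(X)$ as the $\s$-closed $\s$-subvariety corresponding, under Lemma \ref{lemma: correspondence sideal ssubvarriety}, to the $\s$-ideal $\ker(\f^*)\subset k\{Y\}$, and to read off the universal property from the inclusion-reversing bijection between $\s$-ideals and $\s$-closed $\s$-subvarieties. First I would reformulate the factorization condition ideal-theoretically. Every $\s$-closed $\s$-subvariety $Z\subset Y$ has the form $Z=\V(\ida)$ with $\ida=\I(Z)$ a $\s$-ideal of $k\{Y\}$. Since $Z$ is a subfunctor of $Y$, the morphism $\f$ factors (necessarily uniquely) through $Z$ precisely when $\f_R(X(R))\subset Z(R)$ for every $k$-$\s$-algebra $R$. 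Unwinding the functor-of-points description, an element $x\in X(R)$ is a morphism $\psi\colon k\{X\}\to R$, its image $\f_R(x)$ is $\psi\circ\f^*$, and $\f_R(x)\in Z(R)$ says exactly that $\f^*(\ida)\subset\ker(\psi)$. Hence $\f$ factors through $Z$ if and only if $\f^*(\ida)\subset\ker(\psi)$ for all $R$ and all $\psi$.

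The key observation is that this family of conditions can be tested on the universal point alone. Taking $R=k\{X\}$ and $\psi=\id_{k\{X\}}$ shows the condition forces $\f^*(\ida)=0$, i.e.\ $\ida\subset\ker(\f^*)$; conversely, if $\ida\subset\ker(\f^*)$ then $\f^*(\ida)=0\subset\ker(\psi)$ trivially for every $\psi$. Thus $\f$ factors through $Z$ if and only if $\I(Z)\subset\ker(\f^*)$.

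Now $\ker(\f^*)$ is a $\s$-ideal of $k\{Y\}$, because $\f^*$ is a morphism of $k$-$\s$-algebras and so its kernel is stable under $\s$. I would therefore set $\f(X):=\V(\ker(\f^*))$. By Lemma \ref{lemma: correspondence sideal ssubvarriety} we have $\I(\f(X))=\ker(\f^*)$, so the preceding equivalence (applied to $Z=\f(X)$) shows that $\f$ indeed factors through $\f(X)$. If $Z\subset Y$ is any $\s$-closed $\s$-subvariety through which $\f$ factors, then $\I(Z)\subset\ker(\f^*)=\I(\f(X))$, and since $\I$ and $\V$ are mutually inverse, inclusion-reversing bijections, this yields $\f(X)\subset Z$. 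Uniqueness is then formal: any two $\s$-closed $\s$-subvarieties enjoying the stated universal property each factor $\f$, hence each is contained in the other, so they coincide.

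I do not anticipate a genuine obstacle here; the one step demanding care is the reduction to the universal point $\psi=\id_{k\{X\}}$ in the second paragraph. This is precisely what collapses the quantifier over all $R$ and all $\psi$ into the single ideal inclusion $\I(Z)\subset\ker(\f^*)$, thereby identifying $\ker(\f^*)$ as the largest admissible $\s$-ideal and, correspondingly, $\f(X)$ as the smallest admissible $\s$-closed $\s$-subvariety.
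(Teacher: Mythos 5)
Your proposal is correct and takes essentially the same route as the paper: both define $\f(X)=\V(\ker(\f^*))$ and verify the universal property by translating ``$\f$ factors through $Z$'' into the ideal inclusion $\I(Z)\subset\ker(\f^*)$. The paper phrases this translation directly on coordinate rings (factoring $\f^*$ through $k\{Y\}/\I(Z)$), while you pass through the functor of points and the universal point $\id_{k\{X\}}$, but the content is identical.
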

\begin{proof}
	Let $\ida$ denote the kernel of $\f^*\colon k\{Y\}\to k\{X\}$ and set $\f(X)=\V(\ida)\subseteq Y$. As $\f^*$ factors through $k\{Y\}\to k\{\f(X)\}=k\{Y\}/\ida$, we see that $\f$ factors through $\f(X)$. If $\f$ factors through $Z$, i.e., $\f^*$ factors through $k\{Y\}\to k\{Z\}=k\{Y\}/\I(Z)$, then clearly $\I(Z)\subseteq\ida$. So $\f(X)\subseteq Z$.
\end{proof}

%

A morphism $\f\colon X\to Y$ of $\s$-varieties is called a \emph{$\s$-closed embedding}\index{$\s$-closed embedding} if $\f$ induces an isomorphism between $X$ and a $\s$-closed $\s$-subvariety of $Y$, i.e., $X\to \f(X)$ is an isomorphism.
We write $\f\colon X\hookrightarrow Y$ to express that $\f$ is a $\s$-closed embedding. In analogy to a well known result in algebraic geometry we have:

\begin{lemma} \label{lemma: sclosed embedding}
	A morphism $\f\colon X\to Y$ of $\s$-varieties is a $\s$-closed embedding if and only if $\f^*\colon k\{Y\}\to k\{X\}$ is surjective.
\end{lemma}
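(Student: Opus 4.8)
The plan is to reduce everything to the anti--equivalence of Remark \ref{rem: equivalence of categories for svarieties} together with the explicit description of $\f(X)$ furnished by Lemma \ref{lemma: f(X)}. Set $\ida=\ker(\f^*\colon k\{Y\}\to k\{X\})$. By the proof of Lemma \ref{lemma: f(X)} we have $\f(X)=\V(\ida)$ and hence $k\{\f(X)\}=k\{Y\}/\ida$. The morphism $\f$ factors as $X\xrightarrow{\bar\f}\f(X)\hookrightarrow Y$, where $\bar\f$ is the corestriction of $\f$ to its image. Dually, $\f^*$ factors as
$$k\{Y\}\xrightarrow{\ \pi\ } k\{Y\}/\ida=k\{\f(X)\}\xrightarrow{\ \bar\f^*\ }k\{X\},$$
where $\pi$ is the canonical (surjective) projection and $\bar\f^*$ is the map induced by $\f^*$ on the quotient by its kernel. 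By the first isomorphism theorem $\bar\f^*$ is injective.

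By definition $\f$ is a $\s$-closed embedding precisely when $\bar\f\colon X\to\f(X)$ is an isomorphism of $\s$-varieties, which by the anti--equivalence of Remark \ref{rem: equivalence of categories for svarieties} holds if and only if its dual $\bar\f^*$ is an isomorphism of $k$-$\s$-algebras. Since $\bar\f^*$ is already injective, this is equivalent to $\bar\f^*$ being surjective. For the reverse implication, if $\f^*$ is surjective then, as $\f^*=\bar\f^*\circ\pi$, surjectivity of $\f^*$ forces $\bar\f^*$ to be surjective; being injective it is then an isomorphism, so $\f$ is a $\s$-closed embedding. For the forward implication, if $\f$ is a $\s$-closed embedding then $\bar\f^*$ is an isomorphism, in particular surjective, and since $\pi$ is surjective the composite $\f^*=\bar\f^*\circ\pi$ is surjective.

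The argument is essentially formal, so I do not expect a serious obstacle; the only points requiring care are (i) checking that every map in sight is a morphism of $k$-$\s$-algebras, i.e.\ commutes with $\s$ — this is automatic because $\ida$ is a $\s$-ideal, so the quotient $k\{Y\}/\ida$ and the induced map $\bar\f^*$ live in the category of $k$-$\s$-algebras — and (ii) correctly identifying $\bar\f^*$, the dual of the corestriction $\bar\f\colon X\to\f(X)$, with the map $k\{Y\}/\ida\to k\{X\}$ induced by $\f^*$. This identification is precisely what the factorization of $\f$ through $\f(X)$ established in Lemma \ref{lemma: f(X)} provides, so once that is in place the equivalence falls out immediately.
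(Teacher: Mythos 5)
Your proof is correct and follows essentially the same route as the paper: both factor $\f^*$ through $k\{Y\}/\ida=k\{\f(X)\}$, observe that the induced map is injective, and conclude via the anti--equivalence that $X\to\f(X)$ is an isomorphism precisely when $\f^*$ is surjective. Your write-up merely spells out the two implications more explicitly than the paper's one-line argument.
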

\begin{proof}
	Let $\ida$ denote the kernel of $\f^*\colon k\{Y\}\to k\{X\}$. The dual map to $X\to\f(X)$ is $k\{Y\}/\ida\to k\{X\}$. It is an isomorphism if and only if $\f^*$ is surjective.
\end{proof}

Base change works as expected: Let $k'$ be a \ks-algebra and $X$ a $\s$-variety over $k$. 
We can define a functor
$X_{k'}$
from $k'$-$\s$-$\alg$ to $\set$ by $X_{k'}(R')=X(R')$ for any $k'$-$\s$-algebra $R'$. Then $X_{k'}$ is a $\s$-variety over $k'$. Indeed,
$k'\{X_{k'}\}=k\{X\}\otimes_k k'.$
In terms of equations, this means that a system $F\subseteq k\{y\}=k\{y_1,\ldots,y_n\}$ of algebraic difference equations over $k$ is considered as a system $F\subseteq k'\{y\}$ of algebraic difference equations over $k'$. So if $k\{X\}=k\{y\}/[F]$ then $k'\{X_{k'}\}=k'\{y\}/[F]$.

\subsection{Zariski closures} \label{subsec: Zariski closures}

\emph{From now on we assume that our base $\s$-ring $k$ is a $\s$-field.}

\medskip

Since algebraic equations can be interpreted as algebraic difference equations, it is clear that an affine scheme of finite type over $k$, can be interpreted as a $\s$-variety over $k$. In this subsection we make this more precise and we introduce some notation that will be useful in the coming sections. In particular, we define the Zariski closures of a $\s$-subvariety of a variety (cf. Section 4.3 in \cite{Hrushovski:elementarytheoryoffrobenius} and Sections A.4 and A.5 in \cite{DiVizioHardouinWibmer:DifferenceGaloisTheoryOfLinearDifferentialEquations}).

For a $k$-$\s$-algebra $R$, let
$R^\sharp$ denote the $k$-algebra obtained from $R$ by forgetting the ring endomorphism $\s\colon R\to R$.
Let $\X$ be an affine scheme of finite type over $k$.
We can define a functor $[\s]_k\X$ from $k$-$\s$-$\alg$ to $\set$ by
$$[\s]_k\X(R)=\X(R^\sharp)$$
for any $k$-$\s$-algebra $R$. We will show that $[\s]_k\X$ is a $\s$-variety over $k$. That is, we will construct $k\{[\s]_k\X\}$. 
Let $A$ be a $k$-algebra. For every $i\geq 0$ let
$${}^{\s^i}\! A=A\otimes_k k$$ where the tensor product is formed by using $\s^i\colon k\to k$ on the right hand side. We consider ${}^{\s^i\!}A$ as $k$-algebra via the right factor. We set
$$A[i]=A\otimes_k{}^{\s\!}A\otimes_k \cdots\otimes_k{}^{\s^i\!}A.$$
We have inclusions $A[i]\hookrightarrow A[i+1]$ of $k$-algebras and the limit, i.e, the union
$[\s]_kA $
of the $A[i]$'s is a $k$-$\s$-algebra, where for $(r_0\otimes\lambda_0)\otimes\cdots\otimes(r_i\otimes\lambda_i)\in {}^{\s^0\!}A\otimes_k \cdots\otimes_k{}^{\s^i\!}A=A[i]$ the map $\s\colon [\s]_kA\to [\s]_kA$ is given by
$$\s((r_0\otimes\lambda_0)\otimes\cdots\otimes (r_i\otimes\lambda_i))=(1\otimes 1)\otimes(r_0\otimes\s(\lambda_0))\otimes \cdots \otimes(r_i\otimes\s(\lambda_i))\in A[i+1].$$
The inclusion $A=A[0]\hookrightarrow [\s]_kA$ is characterized by the following universal property:
\begin{lemma} \label{lemma: universal property of sA}
	For every $k$-algebra $A$ there exists a $k$-$\s$-algebra $[\s]_kA$ and a morphism $\psi\colon A\to [\s]_kA$ of $k$-algebras such that for every $k$-$\s$-algebra $R$ and every morphism $\psi'\colon A\to R$ of $k$-algebras there exists a unique morphism $\varphi\colon [\s]_kA\to R$ of $k$-$\s$-algebras making
	\[
	\xymatrix{
		A \ar[rr]^-\psi \ar[rd]_{\psi'} & & [\s]_kA \ar@{..>}[ld]^\varphi \\
		& R &
	}
	\]
	commutative. \qed
\end{lemma}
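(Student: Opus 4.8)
The plan is to verify the stated universal property directly for the explicitly constructed $k$-$\s$-algebra $[\s]_kA=\bigcup_i A[i]$, taking $\psi\colon A=A[0]\hookrightarrow[\s]_kA$ to be the canonical inclusion. The assertion is precisely that $[\s]_k(-)$ is left adjoint to the functor $R\rightsquigarrow R^\sharp$ forgetting $\s$, so the proof naturally splits into uniqueness and existence of the factorization $\varphi$.

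For uniqueness I would first record the key structural fact that $[\s]_kA$ is generated by $\psi(A)=A[0]$ as a $k$-$\s$-algebra. Indeed, rewriting an arbitrary generator $(a_0\otimes\lambda_0)\otimes\cdots\otimes(a_i\otimes\lambda_i)$ of $A[i]$ as $(\lambda_0\cdots\lambda_i)\cdot(a_0\otimes 1)\otimes\cdots\otimes(a_i\otimes 1)$ and observing that $(a_j\otimes 1)$ placed in slot $j$ (with $1\otimes 1$ in the other slots) is exactly $\s^j(a_j)$ in $[\s]_kA$, one sees that every element of $A[i]$ is a $k$-linear combination of products $\prod_{j=0}^i\s^j(a_j)$ with $a_j\in A$. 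Hence any $\varphi$ satisfying $\varphi|_{A[0]}=\psi'$ and commuting with $\s$ is forced to send such a product to $\prod_j\s^j(\psi'(a_j))$; more slickly, the locus on which two candidate morphisms agree is a $k$-$\s$-subalgebra containing $A[0]$, hence all of $[\s]_kA$, giving uniqueness.

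For existence I would build $\varphi$ one level at a time. Given the $k$-algebra map $\psi'\colon A\to R$, for each $j$ define a map ${}^{\s^j\!}A=A\otimes_k k\to R$ by $a\otimes\lambda\mapsto\s^j(\psi'(a))\lambda$. The crucial check is well-definedness against the twist: in ${}^{\s^j\!}A$ one has $(ac)\otimes\lambda=a\otimes\s^j(c)\lambda$ for $c\in k$, and since $\psi'$ is a $k$-algebra map and the structure map $k\to R$ is a morphism of $\s$-rings, both sides map to $\s^j(\psi'(a))\s^j(c)\lambda$; it is then routine that each such map is a homomorphism of $k$-algebras. Multiplying over the tensor factors yields $k$-algebra maps $\varphi_i\colon A[i]\to R$ sending $x_0\otimes\cdots\otimes x_i$ to $\prod_j\varphi_i^{(j)}(x_j)$, compatible with the inclusions $A[i]\hookrightarrow A[i+1]$ because the appended factor $1\otimes 1$ maps to $\s^{i+1}(\psi'(1))=1$. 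They therefore glue to a $k$-algebra map $\varphi\colon[\s]_kA\to R$, and evaluating on $A[0]$ gives $\varphi\circ\psi=\psi'$.

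What remains is to confirm that $\varphi$ is a morphism of $\s$-rings, which I regard as the one genuinely load-bearing computation. Applying the explicit shift formula for $\s$ on $[\s]_kA$ to a generator and using that $\s^{j+1}(\psi'(r_j))\s(\lambda_j)=\s\bigl(\s^j(\psi'(r_j))\lambda_j\bigr)$, which holds because $\s$ is a ring endomorphism of $R$, one obtains $\varphi(\s(x))=\s(\varphi(x))$, the prepended $1\otimes 1$ contributing the harmless factor $\s(\psi'(1))=1$. The main obstacle is thus consistent bookkeeping of the twisted tensor structure, namely matching the $\s^j$-twist in the $j$-th factor with the $j$-fold application of $\s$ in $R$, rather than any deep difficulty; once the twist is tracked correctly, both the well-definedness and the $\s$-equivariance fall out.
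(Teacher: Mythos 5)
Your proof is correct. The paper gives no proof of this lemma at all (it is stated with a tombstone as an immediate consequence of the explicit construction of $[\s]_kA$ in the preceding paragraphs), and your direct verification --- uniqueness because $[\s]_kA$ is $\s$-generated by $\psi(A)=A[0]$, existence by assembling the well-defined twisted maps ${}^{\s^j\!}A\to R$, $a\otimes\lambda\mapsto\s^j(\psi'(a))\lambda$ into maps on the $A[i]$ and checking compatibility with the inclusions and with $\s$ --- is exactly the routine argument the paper leaves to the reader.
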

In other words,
\begin{equation} \label{eqn: adjoin} \Hom([\s]_kA,R)\simeq\Hom(A,R^\sharp)\end{equation}
and $[\s]_k$ is left adjoint to the forgetful functor $(-)^\sharp$.

\begin{ex}
	If $A=k[y_1,\ldots,y_n]$, then $[\s]_k A=k\{y_1,\ldots,y_n\}$. More generally, if $A=k[y_1,\ldots,y_n]/(F)$, then $[\s]_k A=k\{y_1,\ldots,y_n\}/[F]$.
\end{ex}

Let $k[\X]$ denote the coordinate ring of $\X$, i.e., $\X=\spec(k[\X])$. Then (\ref{eqn: adjoin}) with $A=k[\X]$ shows that $[\s]_k\X$ is represented by $[\s]_k k[\X]$. If $F\subseteq k[\X]$ generates $k[\X]$ as a $k$-algebra, then $F\subseteq [\s]_k k[\X]$ generates $[\s]_k k[\X]$ as a $k$-$\s$-algebra. Therefore $[\s]_k\X$ is a $\s$-variety over $k$. Indeed, $k\{[\s]_k\X\}=[\s]_k k[\X]$. In the sequel, if confusion is unlikely, we will often write $\X$ instead of $[\s]_k\X$. In particular, we will write $k\{\X\}$ instead of $k\{[\s]_k\X\}$ and by a $\s$-closed $\s$-subvariety of $\X$, we mean a $\s$-closed $\s$-subvariety of $[\s]_k\X$.

%

To define the Zariski closures of a $\s$-closed $\s$-subvariety of $\X$ we will use notations for schemes similarly to the ones introduced for $k$-algebras above: For $i\geq 0$, we denote with ${}^{\s^i\!}\X $
the scheme over $k$ obtained from $\X$ by base change via $\s^i\colon k\to k$. Similarly, if $\f\colon \X\to \Y$ is a morphism of schemes over $k$, then ${}^{\s^i\!}\f\colon{}^{\s^i\!}X\to{}^{\s^i\!}\Y$ denotes the morphism of schemes over $k$ obtained from $\f$ by base change via $\s^i\colon k\to k$. We also set
$\X[i]=\X\times\hs\X\times\cdots\times{}^{\s^i\!}\X.$

Let $Y$ be a $\s$-closed $\s$-subvariety of $\X$. Then $Y$ is defined by a $\s$-ideal $\I(Y)\subseteq k\{\X\}=\cup_{i\geq 0}k[\X[i]]$. For $i\geq 0$, the closed subscheme
$Y[i]$ of $\X[i]$ defined by $\I(Y)\cap k[\X[i]]\subseteq k[\X[i]]$ is called the \emph{$i$-th order Zariski closure of $Y$ in $\X$}\index{$i$-th order Zariski closure}.

For $i\geq 1$ we have morphisms of schemes over $k$
$$\pi_i\colon\X[i]\to\X[i-1],\ (x_0,\ldots,x_i)\mapsto (x_0,\ldots,x_{i-1})$$ and
$$\s_i\colon\X[i]\to{\hs(\X[i-1])}={{}^{\sigma}\!\X}\times\cdots\times{{}^{\sigma^i}\!\X},\ (x_0,\ldots,x_i)\mapsto (x_1,\ldots,x_i)$$ that form a commutative diagram:
\[
\xymatrix{
	\X[i] \ar_{\s_i}[d] \ar^-{\pi_i}[r] & \X[i-1]  \ar^{\s_{i-1}}[d] \\
	{}^{\s\!}(\X[i-1]) \ar^-{{}^{\s\!}\pi_{i-1}}[r] & {}^{\s\!}(\X[i-2]) \\
}
\]
We have induced morphisms $\pi_i\colon Y[i]\to Y[i-1]$, and since $\I(Y)\subseteq k\{\X\}$ is a $\s$-ideal, we also have induced morphisms $\s_i\colon Y[i]\to {}^{\s\!}(Y[i-1])$.

\section{Difference algebraic groups: Examples and linearity} \label{sec: Difference algebraic groups: Examples and linearity }

%
%
%
%

In this section we define $\s$-algebraic groups and present several examples. We also show that every $\s$-algebraic group is isomorphic to a $\s$-closed subgroup of some general linear group.


We work over a fixed $\s$-field $k$. Since the category of $\s$-varieties has products and a terminal object (Remark \ref{rem: products}) we can make the following definition.

\begin{defi}
	A \emph{$\s$-algebraic group} over $k$ is a group object in the category of $\s$-varieties over $k$. 
\end{defi}
Alternatively, we could define a $\s$-algebraic group as a functor from $k$-$\s$-$\alg$ to $\groups$, the category of groups, that is representable by a finitely $\s$-generated $k$-$\s$-algebra.
A \emph{morphism of $\s$-algebraic groups} is a morphism of $\s$-varieties that respects the group structure. A \emph{\mbox{$\s$-closed} embedding} $\f\colon G\iar H$ of \mbox{$\s$-al}gebraic groups is a morphism of \mbox{$\s$-al}gebraic groups that is a $\s$-closed embedding of $\s$-varieties.

Let $G$ be a $\s$-algebraic group. A \emph{$\s$-closed subgroup $H$ of $G$} \index{$\s$-closed subgroup} is a $\s$-closed $\s$-subvariety $H$ of $G$ such that $H(R)$ is a subgroup of $G(R)$ for any $k$-$\s$-algebra $R$. Then $H$ itself is a $\s$-algebraic group. We write $H\leq G$
to express that $H$ is a $\s$-closed subgroup of $G$. 

The duality between affine group schemes and Hopf algebras carries over to the difference world in a straight forward manner.

\begin{defi} \label{defi: ksHopfalgebra}
	A \emph{$k$-$\s$-Hopf algebra} is a Hopf algebra $A$ over $k$ with the structure of a $k$-$\s$-algebra such that the Hopf-algebra structure maps $\Delta\colon A\to A\otimes_k A$, $S\colon A\to A$ and $\varepsilon\colon A\to k$ are morphisms of $k$-$\s$-algebras.
\end{defi}
A \emph{$k$-$\s$-Hopf subalgebra} of a $k$-$\s$-Hopf algebra is a $k$-$\s$-subalgebra that is Hopf subalgebra. A \emph{$\s$-Hopf ideal} of a \ks-Hopf algebra is a Hopf ideal that is also $\s$-ideal.

\begin{rem} \label{rem: equivalence sgroups sHopfalgebras}
	The category of $\s$-algebraic groups over $k$ is anti-equivalent to the category of $k$-$\s$-Hopf algebras that are finitely $\s$-generated over $k$.
\end{rem}
\begin{proof}
	This follows from Remark \ref{rem: equivalence of categories for svarieties} in an analogous fashion to Theorem 1.4 in \cite{Waterhouse:IntroductiontoAffineGroupSchemes}.
\end{proof}
%
 In analogy to Section 2.1 in \cite{Waterhouse:IntroductiontoAffineGroupSchemes}, we obtain the following lemma from Lemma~\ref{lemma: correspondence sideal ssubvarriety}.

\begin{lemma} \label{lemma: sclosed sungroup corresponds to sHopfideal}
	Let $G$ be a $\s$-algebraic group. There is a one-to-one correspondence between the $\s$-closed subgroups of $G$ and the $\s$-Hopf ideals in $k\{G\}$. \qed
\end{lemma}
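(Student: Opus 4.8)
The plan is to reduce the statement to the classical identification of Hopf ideals among all ideals (as in \cite[Section 2.1]{Waterhouse:IntroductiontoAffineGroupSchemes}), transcribed into the difference setting. Write $A=k\{G\}$. By Lemma \ref{lemma: correspondence sideal ssubvarriety}, the $\s$-closed $\s$-subvarieties $H$ of $G$ are \emph{already} in bijection with the $\s$-ideals $\ida=\I(H)$ of $A$, via $H=\V(\ida)$ and $H=\V(\I(H))$. Since a $\s$-Hopf ideal is by definition a Hopf ideal that is also a $\s$-ideal, it therefore suffices to show that, under this bijection, $H$ is a $\s$-closed \emph{subgroup} (i.e.\ $H(R)\leq G(R)$ for every $k$-$\s$-algebra $R$) precisely when the $\s$-ideal $\ida$ is in addition a Hopf ideal.

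Next I would recall how the group operations on $G=\Hom(A,-)$ are induced by the structure maps $\Delta,\varepsilon,S$, which are morphisms of $k$-$\s$-algebras by the definition of a $k$-$\s$-Hopf algebra: for $\psi_1,\psi_2\in G(R)$ the product is $(\psi_1\otimes\psi_2)\circ\Delta$, the identity is $\iota_R\circ\varepsilon$ where $\iota_R\colon k\to R$ is the structure map, and the inverse of $\psi$ is $\psi\circ S$. A point $\psi\in G(R)$ lies in $H(R)$ if and only if $\ida\subset\ker(\psi)$. With this dictionary, each subgroup axiom becomes a condition on $\ida$ that I would verify by testing on a suitable universal $k$-$\s$-algebra. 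For closure under multiplication I would test on $R=(A/\ida)\otimes_k(A/\ida)$, which is a $k$-$\s$-algebra under the diagonal $\s$, using the two $\s$-morphisms $\psi_1,\psi_2$ induced by $a\mapsto\overline{a}\otimes 1$ and $a\mapsto 1\otimes\overline{a}$; requiring $(\psi_1\otimes\psi_2)\circ\Delta$ to kill $\ida$ then reads exactly as $\Delta(\ida)\subset\ida\otimes A+A\otimes\ida$. Testing the identity on $R=k$ (so $\iota_k=\id$) gives $\varepsilon(\ida)=0$, and testing the inverse on $R=A/\ida$ with the quotient map gives $S(\ida)\subset\ida$. Conversely, each of these three conditions forces the corresponding axiom for \emph{every} $R$, because $\Delta,\varepsilon,S$ are $\s$-morphisms and the relevant test morphisms annihilate $\ida$. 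Together the three conditions say exactly that $\ida$ is a Hopf ideal.

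I do not expect a genuine obstacle: the argument is a direct adaptation of the affine group scheme case, and the Yoneda/universal-element bookkeeping is the bulk of it. The only additional points, which are routine, are checking that the test objects such as $(A/\ida)\otimes_k(A/\ida)$ are genuinely $k$-$\s$-algebras under the diagonal endomorphism and that the canonical test morphisms are $\s$-equivariant; both hold because the quotient map $A\to A/\ida$, the coproduct inclusions, and the Hopf structure maps are all morphisms of $k$-$\s$-algebras. Once these compatibilities are noted, the equivalence ``$H$ is a $\s$-closed subgroup $\iff$ $\ida$ is a $\s$-Hopf ideal'' follows, completing the correspondence.
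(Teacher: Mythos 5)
Your proposal is correct and follows exactly the route the paper intends: the paper's entire justification is the one-line remark that the lemma follows from Lemma \ref{lemma: correspondence sideal ssubvarriety} in analogy with Section 2.1 of Waterhouse, and your argument is precisely that reduction, with the universal-element tests (on $(A/\ida)\otimes_k(A/\ida)$, $k$, and $A/\ida$) written out and the $\s$-compatibility of the test objects checked. Nothing is missing.
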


Let us see some examples of $\s$-algebraic groups.

\begin{ex}
	Let $V$ be a finite dimensional $k$-vector space. For every $k$-$\s$-algebra $R$ let $\Gl_V(R)$ denote the group of $R$-linear automorphisms of $V\otimes_k R$. Then $\Gl_V$ is naturally a functor from $k$-$\s$-$\alg$ to $\groups$. Indeed, $\Gl_V$ is a $\s$-algebraic group: By choosing a basis $v_1,\ldots,v_n$ of $V$, we can identify $\Gl_V(R)$ with $\Gl_n(R)$ and $\Gl_V$ is represented by
	$$k\{\Gl_n\}=k\{x_{ij},\tfrac{1}{\det(x)}\}.$$
\end{ex}
More generally:
\begin{ex}
	Every algebraic group over $k$ can be interpreted as a $\s$-algebraic group over $k$. Here, as throughout the text, by an algebraic group over $k$ we mean an affine group scheme of finite type over $k$. Indeed, let $\G$ be an algebraic group over $k$ and as in Section \ref{subsec: Zariski closures}, let $R^\sharp$ denote the $k$-algebra obtained from the $k$-$\s$-algebra $R$ by forgetting \nolinebreak[4] $\s$. Then
	$$R\rightsquigarrow \G(R^\sharp)$$
	is a functor from $k$-$\s$-$\alg$ to $\groups$, i.e., $[\s]_k\G$ is a $\s$-algebraic group.
\end{ex}
We will often write $\G$ instead of $[\s]_k\G$. In particular, by a $\s$-closed subgroup of $\G$ we mean a $\s$-closed subgroup of $[\s]_k\G$ and we write $k\{\G\}$ for $k\{[\s]_k\G\}=[\s]_k k[\G]$.

\begin{ex}
	Let $0\leq \alpha_1<\cdots<\alpha_n$ and $1\leq\beta_1,\ldots,\beta_n$ be integers. We can define a $\s$-closed subgroup $G$ of the multiplicative group $\Gm$ by $$G(R)=\{g\in R^\times|\ \s^{\alpha_1}(g)^{\beta_1}\cdots\s^{\alpha_n}(g)^{\beta_n}=1\}\leq\Gm(R)$$
	for any $k$-$\s$-algebra $R$. Similarly, we can define an endomorphism $\f\colon \Gm\to\Gm$ of the multiplicative group (considered as a $\s$-algebraic group) by $$\f_R\colon \Gm(R)\to \Gm(R),\ g\mapsto \s^{\alpha_1}(g)^{\beta_1}\cdots\s^{\alpha_n}(g)^{\beta_n}$$
	for any \ks-algebra $R$.
\end{ex}

\begin{ex} \label{ex: linear}
	Every homogeneous linear difference equation $$\s^n(y)+\lambda_{n-1}\s^{n-1}(y)+\cdots+\lambda_0y=0$$ over $k$ defines a $\s$-closed subgroup $G$ of the additive group $\Ga$, i.e.,
	$$G(R)=\{g\in R| \ \s^n(g)+\lambda_{n-1}\s^{n-1}(g)+\cdots+\lambda_0g=0\}\leq \Ga(R)$$
	for any $k$-$\s$-algebra $R$. We can define an endomorphism $\f\colon \Ga\to\Ga$ of the additive group (considered as a $\s$-algebraic group) by $$\f_R\colon \Ga(R)\to \Ga(R),\ g\mapsto \s^n(g)+\lambda_{n-1}\s^{n-1}(g)+\cdots+\lambda_0g $$
	for any \ks-algebra $R$.
\end{ex}

\begin{ex} \label{ex: unitary group}
	The equations of the unitary group define a $\s$-closed subgroup of the general linear group:
	$$G(R)=\{g\in\Gl_n(R)|\ g\s(g)^{\operatorname{T}}=\s(g)^{\operatorname{T}}g=I_n\}\leq\Gl_n(R)$$
	for any $k$-$\s$-algebra $R$.
\end{ex}

\begin{ex} \label{ex: constant Sln}
	For $m\geq 1$ we can define a $\s$-closed subgroup $G$ of $\Sl_n$ by
	$$G(R)=\{g\in\Sl_n(R)|\ \s^m(g)=g\}\leq\Sl_n(R)$$
	for any \ks-algebra $R$.
\end{ex}

Examples \ref{ex: unitary group} and \ref{ex: constant Sln} can be generalized as follows:

\begin{ex} \label{ex: general constr}
	Let $m\geq 1$ be an integer, $\G$ an algebraic group over $k$ and $\f\colon \G\to{}^{\s^m\!}\G$ a morphism of algebraic groups over $k$.
	There is a morphism $\s^m\colon \G\to{}^{\s^m\!}\G$ of $\s$-algebraic groups over $k$, which, in terms of equations is given by applying $\s^m$ to the coordinates. In terms of $k$-$\s$-algebras this morphism can be described as ${}^{\s^m\!}(k\{\G\})=k\{\G\}\otimes_k k\to k\{\G\},\ f\otimes\lambda\mapsto\s^m(f)\lambda$. We can define a $\s$-closed subgroup $G$ of $\G$ by
	$$G(R)=\{g\in\G(R)|\ \s^m(g)=\f(g)\}\leq\G(R)$$
	for any $k$-$\s$-algebra $R$. Note that Example \ref{ex: unitary group} ($\G=\Gl_n$, $m=1$, $\f(g)=(g^{-1})^{\operatorname{T}}$) and Example \ref{ex: constant Sln} ($\G=\Sl_n$, $\f(g)=g$) are special cases of this construction.
\end{ex}

\begin{ex} \label{ex: new}
	For a \ks-algebra $R$ let $G(R)=R\times R^\times$. A straight forward computation shows that setting
	$$(n_1,h_1)\cdot (n_2,h_2)=(n_1+h_1\s(h_1)^{-1}n_2,h_1h_2)$$
	for $(n_1,h_1),(n_2,h_2)\in G(R)$ defines a group structure on $G(R)$. Thus $G$ is naturally a \mbox{$\s$-algebraic} group.
\end{ex}

\begin{ex} \label{ex: Frobenius algebraic group}
	Let $k$ be a field of positive characteristic $p$ and let $q$ be a power of $p$. Consider $k$ as a $\s$-field via the Frobenius, i.e., $\s(\lambda)=\lambda^q$ for $\lambda\in k$. An algebraic group $\G$ over $k$ can be considered as a $\s$-algebraic group over $k$: We turn $k[\G]$ into a $k$-$\s$-algebra by setting $\s(f)=f^q$ for $f\in k[\G]$. Then clearly $k[\G]$ is a $k$-$\s$-Hopf algebra.
\end{ex}

A finite group can be interpreted an as algebraic group, usually called a finite constant group. See e.g., \cite[2.3]{Milne:AlgebraicGroupsTheTheoryOfGroupSchemesOfFiniteTypeOverAField} or \cite[Section 2.3]{Waterhouse:IntroductiontoAffineGroupSchemes}. The following example is a difference analog of this construction.

\begin{ex} \label{ex: split finite}
	Let $\mathsf{G}$ be a finite group and $\s\colon \mathsf{G}\to \mathsf{G}$ a group endomorphism.
	We define a functor $G$ from $\ksalg$ to $\groups$ as follows: For a \ks-algebra $R$, let $G(R)$ denote the set of all locally constant maps $f\colon \spec(R)\to\mathsf{G}$ such that
	$$
	\xymatrix{
		\spec(R) \ar_\Sigma[d] \ar^-f[r] & \mathsf{G} \ar^\sigma[d] \\
		\spec(R) \ar^-f[r] & \mathsf{G}		
}
	$$
	commutes, where $\Sigma(\p)=\s^{-1}(\p)$ (for $\s\colon R\to R$). The group structure on $G(R)$ is given by pointwise multiplication. Note that the definition of $G(R)$ is functorial in $R$ because a morphism $R_1\to R_2$ of \ks-algebras induces a continuous map $\spec(R_2)\to \spec(R_1)$ such that 
	$$
	\xymatrix{
	\spec(R_2) \ar[r] \ar_\Sigma[d] & \spec(R_1) \ar^\Sigma[d] \\
	\spec(R_2) \ar[r] & \spec(R_1)	
	}
	$$
	commutes. To show that $G$ is a $\s$-algebraic group, consider the $k$-algebra $k^\mathsf{G}$ of all maps from $\mathsf{G}$ to $k$ (with pointwise addition and multiplication). For $g\in \mathsf{G}$ let $\p_g$ denote the prime ideal of $k^\mathsf{G}$ consisting of all functions that map $g$ to $0$. Note that every prime ideal of $k^\mathsf{G}$ is of the form $\p_g$ for a unique $g\in\mathsf{G}$.
	
	For a $k$-algebra $R$, the map that associates to a morphism $\psi\colon k^\mathsf{G}\to R$ of $k$-algebras, the map $\widetilde{\psi}\colon\spec(R)\to \mathsf{G},\ \p\mapsto g $, where $g$ is the unique $g\in\mathsf{G}$ such that $\psi^{-1}(\p)=\p_g$ is a bijection from the set of $k$-algebra morphisms $k^\mathsf{G}\to R$ to the set of locally constant maps $\spec(R)\to \mathsf{G}$ (\cite[Chapter I, \S 1, 6.9]{DemazureGabriel:GroupesAlgebriques}).
		
	We define the structure of a \ks-algebra on $k^\mathsf{G}$ by $\s(h(g))=\s(h(\s(g))$ for $g\in \mathsf{G}$ and $h\colon \mathsf{G}\to k$. We claim that $\Hom(k^\mathsf{G},R)\simeq G(R)$ for every \ks-algebra $R$.
	First, note that the bijection $\mathsf{G}\to \spec(k^\mathsf{G}),\ g\mapsto \p_g$ is such that 
	$$
	\xymatrix{
	\mathsf{G} \ar[r] \ar_\s[d] & \spec(k^\mathsf{G}) \ar^\Sigma[d] \\
	\mathsf{G} \ar[r] & \spec(k^\mathsf{G})
}
	$$
	commutes. Since 
	$$ 
	\xymatrix{
		\spec(R) \ar[r] \ar_\Sigma[d] & \spec(k^\mathsf{G})  \ar^\Sigma[d] \\
		\spec(R) \ar[r] & \spec(k^\mathsf{G})
	}
	$$
	commutes for every $\psi\in\Hom(k^\mathsf{G},R)$, we see that the map $\Hom(k^\mathsf{G},R)\to G(R),\ \psi\mapsto \widetilde{\psi}$ is well-defined. It remains to show that a morphism $\psi\colon k^\mathsf{G}\to R$ of $k$-algebras is a morphism of $\s$-rings if $\widetilde{\psi}$ lies $G(R)$.
	In other words, we need to show that
	\begin{equation} \label{eq: diag for example}
	\xymatrix{
		 k^\mathsf{G} \ar^\psi[r] \ar_\s[d] & R \ar^\s[d] \\
		 k^\mathsf{G} \ar^\psi[r] & R
		}
	 \end{equation}
	commutes, if the dual diagram
	$$
	\xymatrix{
		\spec(k^\mathsf{G})  & \spec(R) \ar[l]  \\
		\spec(k^\mathsf{G}) \ar^{\Sigma}[u]  & \spec(R) \ar[l] \ar_{\Sigma}[u]
	}
	$$
	commutes.
	As in Section \ref{subsec: Zariski closures}, let ${\hs R}=R\otimes_k k$ denote the $k$-algebra obtained from the $k$-algebra $R$ by base change via $\s\colon k\to k$. Then $\widetilde{\s}\colon{\hs R}\to R,\ r\otimes \lambda\mapsto \s(r)\lambda$ is a morphism of $k$-algebras. We define $\widetilde{\s}\colon{\hs (k^{\mathsf{G}})}\to k^{\mathsf{G}}$ similarly. Then diagram (\ref{eq: diag for example}) commutes if and only if
	the diagram	\begin{equation} \label{eq; diag for example 2}
	\xymatrix{
		{\hs (k^\mathsf{G})} \ar^{\hs \psi}[r] \ar_{\widetilde{\s}}[d] & {\hs R \ar^{\widetilde{\s}}[d]} \\
		k^\mathsf{G} \ar^\psi[r] & R
	}
	\end{equation}
	of $k$-algebras commutes. Since two morphisms of $k$-algebras ${\hs (k^\mathsf{G})}\to R$ agree if they induce the same map $\spec(R)\to\spec({\hs (k^\mathsf{G})})$, it suffices to show that the diagram
		\begin{equation} \label{eq: diag for example 3}
	\xymatrix{
		\spec({\hs (k^\mathsf{G})}) & \spec({\hs R}) \ar[l] \\
		\spec(k^\mathsf{G}) \ar[u]  & \spec(R) \ar[l] \ar[u]
	}
	\end{equation}
	dual to (\ref{eq; diag for example 2}) commutes.
	Let $i\colon R\to {\hs R},\ r\mapsto r\otimes 1$, then $R\xrightarrow{i}{\hs R}\xrightarrow{\widetilde{\s}}R$ equals $\sigma$ and similarly for $k^\mathsf{G}$ in place of $R$. In the diagram
	$$ 
	\xymatrix{
		\spec(k^\mathsf{G}) & \spec(R) \ar[l] \\
		\spec({\hs(k^\mathsf{G})}) \ar[u] & \spec({\hs R}) \ar[l] \ar[u]	\\
		\spec(k^\mathsf{G}) \ar[u]  \ar@/^3pc/^\Sigma[uu] & \spec(R) \ar[l] \ar[u] \ar@/_3pc/_\Sigma[uu]
}
	$$
	the upper square, the outer square and the left and right triangles commute. Moreover, the map $\spec({\hs(k^\mathsf{G})})\to \spec(k^\mathsf{G})$ is bijective. It thus follows that diagram (\ref{eq: diag for example 3}) commutes. So $k^\mathsf{G}$ represents $G$.
	
	The Hopf algebra structure on $k^\mathsf{G}$ agrees with the one described in \cite[Section 2.3]{Waterhouse:IntroductiontoAffineGroupSchemes}.
\end{ex}
%
%

To show that any $\s$-algebraic group is isomorphic to a $\s$-closed subgroup of some general linear group, we need a lemma on Hopf algebras:

\begin{lemma} \label{lemma: Hopf adjunction}
	Let $A$ be a $k$-Hopf algebra, $B$ a \ks-Hopf algebra and $A\to B$ a morphism of $k$-Hopf algebras. Then the morphism $[\s]_kA\to B$ of \ks-algebras induced by Lemma~\ref{lemma: universal property of sA} is a morphism of $k$-Hopf algebras. 
\end{lemma}
\begin{proof}
	The comultiplication of $[\s]_k A$ is $[\s]_k(\Delta)\colon [\s]_kA\to[\s]_k(A\otimes_k A)=[\s]_k A\otimes_k [\s]_k A$ where $\Delta\colon A\to A\otimes_k A$ is the comultiplication of $A$.
	All the subdiagrams with empty interior of 
	$$
	\xymatrix{
		[\s]_kA \ar[r] \ar@/_20pt/[dd] & [\s]_k(A\otimes_k A) \ar@/^25pt/[dd] \\
		A \ar[u] \ar[d] \ar[r] & A\otimes_k A \ar[u] \ar[d] \\
		B \ar[r] & B\otimes_k B	
	}
	$$
	commute. We have to show that the outer diagram commutes. But this follows from the uniqueness statement in Lemma \ref{lemma: universal property of sA}. A similar argument applies for the counit.
\end{proof}
Similar to (affine) algebraic groups, $\s$-algebraic groups can be linearized:
\begin{prop} \label{prop: linearization}
	Let $G$ be a $\s$-algebraic group over $k$. Then there exists a finite dimensional $k$-vector space $V$ and a $\s$-closed embedding
	$G\hookrightarrow \Gl_V$. In particular, $G$ is isomorphic to a $\s$-closed subgroup of $\Gl_{n}$ for some $n\geq 1$.
\end{prop}
\begin{proof}
	Assume that $f_1,\ldots,f_m\in k\{G\}$ generate $k\{G\}$ as a $k$-$\s$-algebra. By \cite[Section 3.3, p. 24]{Waterhouse:IntroductiontoAffineGroupSchemes} there exists a Hopf subalgebra $A$ of $k\{G\}$ that contains $f_1,\ldots,f_m$ and is finitely generated as a $k$-algebra. By \cite[Section 3.4, p. 25]{Waterhouse:IntroductiontoAffineGroupSchemes} there exists an integer $n\geq 1$ and a surjective morphism $k[\Gl_{n}]\to A$ of Hopf algebras. By Lemma  \ref{lemma: universal property of sA} the morphism $k[\Gl_{n}]\to A\hookrightarrow k\{G\}$ of $k$-algebras extends to a morphism
	$k\{\Gl_{n}\}\to  k\{G\}$ of $k$-$\s$-algebras. Indeed, this is a morphism of $k$-$\s$-Hopf algebras (Lemma \ref{lemma: Hopf adjunction}) and since $f_1,\ldots,f_m$ lie in the image, it is surjective. Now the claim follows from Lemma \ref{lemma: sclosed embedding}.
\end{proof}


\section{Zariski closures of difference algebraic groups and the growth group} \label{sec: Zariski closures of difference algebraic groups and the growth group}

%

In this section we show that the eventual growth of the Zariski closures of a $\s$-closed subgroup $G$ of an algebraic group $\G$ is governed by an algebraic group, called the \emph{growth group} of $G$ (with respect to the $\s$-closed embedding $G\hookrightarrow\G$). This result is key for obtaining the finiteness results in the next section. We also introduce the $\s$-dimension and the order of a $\s$-algebraic group.

\medskip

Let $k$ be a $\s$-field and $\G$ an algebraic group over $k$. Then, for ever $i\geq 0$, ${^{\sigma^i}\!\G}$ and $\G[i]=\G\times{^{\sigma}\!\G}\times\cdots\times{^{\sigma^i}\!\G}$ (see Section \ref{subsec: Zariski closures}) are naturally algebraic groups over $k$. 
For every $i\geq 1$ the maps $\pi_i\colon\G[i]\to\G[i-1]$ and $\s_i\colon\G[i]\to{\hs(\G[i-1])}$ are morphisms of algebraic groups over $k$.


Let $G$ be a $\s$-closed subgroup of $\G$. For every $i\geq 0$ we have an inclusion $k[\G[i]]\subseteq k\{\G\}$ of Hopf algebras. Since $\I(G)\subseteq k\{\G\}$ is a Hopf ideal, $\I(G)\cap k[\G[i]]$ is a Hopf ideal of $k[\G[i]]$. So the $i$-th order Zariski closure $G[i]$ of $G$ in $\G$ is a closed subgroup of $\G[i]$. The maps $\pi_i\colon G[i]\to G[i-1]$ and $\s_i\colon G[i]\to{\hs(G[i-1])}$ are morphisms of algebraic groups over $k$ and form a commutative diagram:
\begin{equation} \label{eqn: comm diag pi si}
\xymatrix{
	G[i] \ar_{\s_i}[d] \ar^-{\pi_i}[r] & G[i-1]  \ar^{\s_{i-1}}[d] \\
	{}^{\s\!}(G[i-1]) \ar^-{{}^{\s\!}\pi_{i-1}}[r] & {}^{\s\!}(G[i-2]) \\
}
\end{equation}
For $i\geq 1$ we set
$$\G_i=\ker(\pi_i)\leq G[i].$$ We also set $\G_0=G[0]$. Because of (\ref{eqn: comm diag pi si}) we have induced morphisms $\s_i\colon \G_i\to{\hs(\G_{i-1})}$ of algebraic groups over $k$.

\begin{prop} \label{prop: existence of growth group}
	For every $i\geq 1$ the map $\s_i\colon \G_i\to{\hs(\G_{i-1})}$ is a closed embedding and there exists an integer $m\geq 1$ such that
	$\s_i\colon \G_i\to{\hs(\G_{i-1})}$ is an isomorphism for every $i\geq m$.
\end{prop}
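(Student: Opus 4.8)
The plan is to prove the two assertions separately: first that each $\s_i$ is a closed embedding, and then that these closed embeddings are isomorphisms for $i$ large. Throughout I would identify $\G_i$ and ${}^{\s\!}(\G_{i-1})$ with closed subgroups of the single algebraic group ${}^{\s^i\!}\G$. Since $\G_i=\ker(\pi_i)$ consists of those points of $G[i]$ whose first $i$ coordinates are trivial, projection onto the last factor identifies $\G_i$ with the closed subgroup $H_i:=\{g\in{}^{\s^i\!}\G\mid (e,\ldots,e,g)\in G[i]\}$ of ${}^{\s^i\!}\G$; likewise ${}^{\s\!}(\G_{i-1})$ is identified, via its last factor, with ${}^{\s\!}H_{i-1}\leq{}^{\s^i\!}\G$, the base extension of $H_{i-1}\leq{}^{\s^{i-1}\!}\G$. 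Chasing the commutative diagram (\ref{eqn: comm diag pi si}) shows that $\s_i(\G_i)\subseteq{}^{\s\!}(\G_{i-1})$ and that under these identifications $\s_i$ is simply the inclusion $H_i\hookrightarrow{}^{\s\!}H_{i-1}$ inside ${}^{\s^i\!}\G$. As an inclusion of closed subgroups of ${}^{\s^i\!}\G$ it is a closed embedding, which settles the first assertion.

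For the stabilization I would first reduce to the case that $\s$ is an automorphism of $k$ by base changing to the inversive closure $k^*$ of $k$. The formation of the Zariski closures $G[i]$, of the kernels $\G_i$, and of the maps $\s_i$ all commute with the faithfully flat extension $k\to k^*$ (flatness lets the defining $\s$-ideal $\I(G)$, together with its intersections with the $k[\G[i]]$, be extended and intersected compatibly, and kernels commute with base change), while being a closed embedding or an isomorphism can be checked after this faithfully flat base change. So it suffices to treat $k$ inversive.

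With $\s$ invertible on $k$, base extension ${}^{\s^{-i}\!}(-)$ along the automorphism $\s^{-i}$ is an auto-equivalence of finite-type $k$-schemes carrying ${}^{\s^i\!}\G$ back to $\G$. Applying ${}^{\s^{-i}\!}(-)$ to the inclusions $H_i\subseteq{}^{\s\!}H_{i-1}$ from the first part yields a descending chain $\widetilde{H}_0\supseteq\widetilde{H}_1\supseteq\widetilde{H}_2\supseteq\cdots$ of closed subgroup schemes $\widetilde{H}_i:={}^{\s^{-i}\!}H_i$ of the one fixed Noetherian scheme $\G$. By the descending chain condition this chain stabilizes, say $\widetilde{H}_i=\widetilde{H}_{i-1}$ for all $i\geq m$; untwisting by ${}^{\s^i\!}(-)$ gives $H_i={}^{\s\!}H_{i-1}$, that is, $\s_i$ is an isomorphism, for all $i\geq m$.

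The main obstacle is the non-invertibility of $\s$: without it one cannot untwist the subgroups $H_i\leq{}^{\s^i\!}\G$, which sit in a moving sequence of ambient groups, into a genuine descending chain inside one fixed Noetherian scheme, and a naive dimension count does not suffice, since two closed subgroups of equal dimension (with the same identity component but different component groups, or different nilpotent structure) need not coincide. The device of passing to the inversive closure converts the problem into a plain application of the Noetherian descending chain condition and simultaneously handles the non-reduced and disconnected parts. The remaining care lies in the two compatibility points flagged above: the identification of $\s_i|_{\G_i}$ with the inclusion $H_i\hookrightarrow{}^{\s\!}H_{i-1}$, and the flat base-change compatibility of the whole construction.
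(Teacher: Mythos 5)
Your proposal is correct and follows essentially the same route as the paper: the closed-embedding part rests on identifying $\G_i$ with a closed subgroup of ${}^{\s^i\!}\G$ via the last coordinate (the paper phrases this as $k[\G_i]$ being generated by the image of $\s^i(\overline{a})$), and the stabilization is obtained by first passing to the inversive closure $k^*$ and then invoking the descending chain condition on closed subschemes of the finite-type scheme $\G$ (your explicit untwisting by ${}^{\s^{-i}\!}(-)$ is exactly the paper's ``descending chain $\G_0\hookleftarrow\G_1\hookleftarrow\cdots$'' built from the semilinear surjections $\psi_i$), followed by faithfully flat descent back to $k$. The two compatibility points you flag are the same ones the paper handles, so there is no gap.
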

\begin{proof}
	Let us start by describing $\s_i$ in algebraic terms. Assume that $A=\{a_1,\ldots,a_n\}$ generates $k[\G]$ as a $k$-algebra and let $\overline{A}$ denote the image of $A$ in $k\{G\}$. So $$k[\G]=k[A]\subseteq k[A,\s(A),\ldots]=k\{\G\}$$ and $k[G[i]]=k[\overline{A},\ldots,\s^i(\overline{A})]$. The morphism $\s_i\colon G[i]\to{\hs(G[i-1])}$ corresponds to the map
	$${\hs\big(k[\overline{A},\ldots,\s^{i-1}(\overline{A})]\big)}\longrightarrow k[\overline{A},\ldots,\s^i(\overline{A})],\ f\otimes\lambda\mapsto \s(f)\lambda$$ and the morphism $\s_i\colon \G_i\to{\hs(\G_{i-1})}$ corresponds to the map
	\begin{align*}{\hs\big(k[\overline{A},\ldots,\s^{i-1}(\overline{A})]\otimes_{k[\overline{A},\ldots,\s^{i-2}(\overline{A})]} k\big)} & \longrightarrow k[\overline{A},\ldots,\s^{i}(\overline{A})]\otimes_{k[\overline{A},\ldots,\s^{i-1}(\overline{A})]} k \\  (f\otimes\lambda)\otimes\mu & \longmapsto \s(f)\otimes\s(\lambda)\mu
	\end{align*}
	where the tensor products are formed in virtue of the counit $\varepsilon\colon k\{G\}\to k$. Since $k[\overline{A},\ldots,\s^{i}(\overline{A})]\otimes_{k[\overline{A},\ldots,\s^{i-1}(\overline{A})]}k$ is generated as a $k$-algebra by the image of $\s^i(\overline{A})$, the above map is clearly surjective. Thus $\s_i\colon \G_i\to{\hs(\G_{i-1})}$ is a closed embedding.
	
	To prove the second claim of the proposition, let us first assume that $k$ is inversive. The map \begin{align*} \psi_i\colon k[\overline{A},\ldots,\s^{i-1}(\overline{A})]\otimes_{k[\overline{A},\ldots,\s^{i-2}(\overline{A})]} k & \longrightarrow k[\overline{A},\ldots,\s^{i}(\overline{A})]\otimes_{k[\overline{A},\ldots,\s^{i-1}(\overline{A})]} k\\  f\otimes\lambda& \longmapsto \s(f)\otimes\s(\lambda)
	\end{align*} is a morphism of rings but it is not a morphism of $k$-algebras. However, since $k$ is inversive, it is surjective. We thus have a descending chain of closed subschemes
	$$\G_0\hookleftarrow\G_1\hookleftarrow\G_2\cdots.$$ Since $\G_0$ is of finite type over $k$, this sequence must stabilize. That is, there exists an integer $m\geq 1$ such that $\psi_i$ is bijective for every $i\geq m$. Since $k$ is inversive, $$k[\G_i]\to{\hs(k[\G_i])},\ f\mapsto f\otimes 1$$ is bijective. It follows that for $i\geq m$, the morphism ${\hs(k[\G_{i-1}])}\to k[\G_i]$ dual to $\s_i$ is an isomorphism, since it can be obtained as the composition ${\hs(k[\G_{i-1}])}\to k[\G_{i-1}]\xrightarrow{\psi_i} k[\G_i]$ of two bijective maps. This proves the proposition for $k$ inversive.
	
	The general case can be reduced to the inversive case: Let $k^*$ denote the inversive closure of $k$ (\cite[Definition 2.1.6]{Levin:difference}). So, in particular, $k^*$ is an inversive $\s$-field extension of $k$. The formation of Zariski closures and of the $\G_i$ is compatible with base change. It therefore follows from the inversive case, that there exists an integer $m\geq 1$ such that for every $i\geq m$ the morphism $\s_i\colon \G_i\to{\hs(\G_{i-1})}$ becomes an isomorphism after base change from $k$ to $k^*$. But then already $\s_i$ must be an isomorphism for $i\geq m$.
\end{proof}

Proposition \ref{prop: existence of growth group} allows us to associate to the inclusion $G\leq\G$ an algebraic group, that measures the (eventual) growth of the Zariski closures $G[i]$ of $G$ in $\G$.

\begin{defi}
	Let $\G$ be an algebraic group and $G\leq\G$ a $\s$-closed subgroup. For $i\geq 1$ let $\G_i$ denote the kernel of the projection $\pi_i\colon G[i]\to G[i-1]$ between the Zariski closures of $G$ in $\G$. Let $m\geq 0$ denote the smallest integer such that $\s_i\colon \G_i\to{\hs(\G_{i-1})}$ is an isomorphism for every $i>m$. Then $\G_m$ is called the \emph{growth group of $G$}\index{growth group of $G$} with respect to the $\s$-closed embedding $G\hookrightarrow\G$.
\end{defi}

\begin{ex} \label{ex: ssubgroup of Gm}
	Let $0\leq \alpha_1<\cdots<\alpha_n$ and $1\leq\beta_1,\ldots,\beta_n$ be integers and $G\leq\Gm$ the $\s$-closed subgroup of the multiplicative group $\Gm$ given by $$G(R)=\{g\in R^\times|\ \s^{\alpha_1}(g)^{\beta_1}\cdots\s^{\alpha_n}(g)^{\beta_n}=1\}$$
	for every $k$-$\s$-algebra $R$. Then the growth group of $G$ with respect to the given embedding $G\hookrightarrow\Gm$ is $\mu_{\beta_n}$, where $\mu_{\beta_n}(T)=\{g\in T^\times|\ g^{\beta_n}=1\}$ for any $k$-algebra $T$.
\end{ex}

\begin{ex} \label{ex: growth group of linear}
	Let $G$ be a $\s$-closed subgroup of the additive group defined by a linear difference equation. (Cf. Example \ref{ex: linear}.) Then the growth group with respect to the given embedding $G\iar\Ga$ is trivial.
\end{ex}

\begin{ex} \label{ex: growth group of algebraic group}
	Let $\G$ be an algebraic group. The growth group with respect to the tautological embedding $G=[\s]_k\G\hookrightarrow \G$ is $\G$ itself.
\end{ex}

The following example shows that the growth group does indeed depend on the embedding $G\hookrightarrow\G$ and therefore is not an invariant of $G$.
\begin{ex}
	Let $G=\Gm$ (considered as a $\s$-algebraic group). For $n\geq 1$ the $\s$-closed embedding
	$$G\to\Gm^2,\ g\mapsto (g,\s(g)^n)$$ identifies $G$ with the $\s$-closed subgroup $G\leq\Gm^2$ given by
	$$G(R)=\{(g_1,g_2)\in\Gm(R)^2|\ \s(g_1)^n=g_2\}$$
	for any $k$-$\s$-algebra $R$. The growth group of $G$ with respect to the embedding $G\hookrightarrow\Gm^2$ is $\mu_n\times\Gm$.
\end{ex}

Even though the growth group itself does depend on the chosen embedding $G\hookrightarrow\G$, it carries some information that only depends on $G$. We will show below that the dimension of the growth group does not depend on the embedding $G\hookrightarrow\G$. In Section \ref{sec: limit degree} we will see that also the size of the growth group is independent of the chosen embedding.

The following theorem can be seen as a group theoretic analog of the classical theorem on the existence of the so-called dimension polynomial of an extension of $\s$-fields (\cite[Theorem 4.2.1]{Levin:difference}). See also \cite[Lemma 4.21]{Hrushovski:elementarytheoryoffrobenius} (but note that we do not need any integrality assumptions).

\begin{theo} \label{theo: existence sdim}
	Let $\G$ be an algebraic group and $G\leq\G$ a $\s$-closed subgroup. For $i\geq 0$ let $d_i=\dim(G[i])$ denote the dimension of the $i$-th order Zariski closure of $G$ in $\G$. Then there exist integers $d,e\geq0$ such that
	$$d_i=d(i+1)+e \text{ for } i\gg 0.$$
	The integer $d$ only depends on $G$ and not on the choice of $\G$ and the $\s$-closed embedding $G\hookrightarrow \G$. If $d=0$, the integer $e$ only depends on $G$ and not on the choice of $\G$ and the $\s$-closed embedding $G\hookrightarrow \G$.
\end{theo}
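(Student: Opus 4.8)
The plan is to establish the existence of $d$ and $e$ for one fixed embedding by analyzing the fibration $\pi_i\colon G[i]\to G[i-1]$ together with Proposition \ref{prop: existence of growth group}, and then to prove the invariance assertions by comparing the $\s$-generating sets coming from two embeddings.

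For existence, I would first note that the inclusion $k[G[i-1]]\subseteq k[G[i]]$ of coordinate rings inside $k\{G\}$ makes $\pi_i$ dominant; since it is a homomorphism of algebraic groups its image is a closed subgroup, so $\pi_i$ is surjective with kernel $\G_i$. The dimension formula for a surjective homomorphism of algebraic groups then gives
$$d_i=d_{i-1}+\dim(\G_i)\qquad(i\ge 1).$$
By Proposition \ref{prop: existence of growth group} the map $\s_i\colon\G_i\to{\hs(\G_{i-1})}$ is a closed embedding, and as base extension along $\s\colon k\to k$ preserves dimension, $\dim(\G_i)\le\dim({\hs(\G_{i-1})})=\dim(\G_{i-1})$. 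Hence $(\dim(\G_i))_{i\ge 1}$ is a non-increasing sequence of non-negative integers; let $d$ be its eventual value and $m$ be as in the definition of the growth group, so that $\dim(\G_i)=d$ for all $i>m$ (where $\s_i$ is an isomorphism). Summing the recursion gives, for $i\ge m$,
$$d_i=d_m+(i-m)d=d(i+1)+e,\qquad e:=d_m-(m+1)d.$$

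To see $e\ge 0$, I would use the case $i=1$ of the closed embedding, namely $\s_1\colon\G_1\to{\hs(\G_0)}$ with $\G_0=G[0]$, to get $d\le\dim(\G_1)\le\dim(G[0])=d_0$. Since $(\dim(\G_j))_j$ is non-increasing we also have $\dim(\G_j)\ge d$ for $1\le j\le m$, so
$$d_m=d_0+\sum_{j=1}^m\dim(\G_j)\ge d_0+md\ge (m+1)d,$$
which yields $e\ge 0$ and completes the existence part.

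For invariance, let $G\hookrightarrow\G$ and $G\hookrightarrow\G'$ be two $\s$-closed embeddings, with associated data $d_i,d,e$ and $d_i',d',e'$. Taking images of $k$-algebra generators of $k[\G]$ and $k[\G']$ produces finite $\s$-generating sets $\overline a,\overline b$ of $k\{G\}$ with $k[G[i]]=k[\overline a,\ldots,\s^i(\overline a)]$ and $k[G'[i]]=k[\overline b,\ldots,\s^i(\overline b)]$. Since $k\{G\}=\bigcup_i k[G[i]]$, there is an $r$ with $\overline b\subseteq k[G[r]]$; applying $\s^l$ gives $\s^l(\overline b)\subseteq k[G[r+l]]$, whence $k[G'[i]]\subseteq k[G[i+r]]$. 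An inclusion of finitely generated $k$-algebras does not increase Krull dimension, so $d_i'\le d_{i+r}$, and by symmetry $d_i\le d'_{i+r'}$ for some $r'$. Comparing the leading coefficients of the eventual linear expressions forces $d'\le d$ and $d\le d'$, hence $d=d'$. If in addition $d=d'=0$, then $d_i=e$ and $d_i'=e'$ for $i\gg 0$, and the same two inequalities give $e'\le e$ and $e\le e'$, so $e=e'$. I expect the main obstacle to lie entirely in the existence paragraph: verifying the dimension additivity $d_i=d_{i-1}+\dim(\G_i)$ for $\pi_i$ (surjectivity and the fibre-dimension behaviour for homomorphisms of possibly non-reduced, non-smooth affine group schemes) and that $\s$-base extension preserves dimension; once these are in place everything else is bookkeeping on Proposition \ref{prop: existence of growth group}, and the invariance is soft, resting only on the fact that any two finite $\s$-generating sets of $k\{G\}$ are related by a bounded shift.
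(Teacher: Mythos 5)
Your proposal is correct and follows essentially the same route as the paper: decompose $d_i$ via the kernels $\G_i$ of the projections, use Proposition \ref{prop: existence of growth group} to get monotonicity and eventual stabilization of $\dim(\G_i)$ (hence the linear formula and $e\geq 0$), and prove invariance by relating two $\s$-generating sets through a bounded shift of the filtrations. The only cosmetic differences are that you spell out the surjectivity and dimension-additivity of $\pi_i$, which the paper takes for granted, and you deduce the invariance of $e$ directly from the inequalities $d_i'\leq d_{i+r}$ rather than via the paper's intrinsic characterization of $e$ as the maximal dimension of a finitely generated $k$-subalgebra of $k\{G\}$; both are valid.
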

\begin{proof}
	Let $\G_i$ denote the kernel of $G[i]\to G[i-1]$. Then $\dim(G[i])=\dim(G[i-1])+\dim(\G_i)$. Let $m\geq 0$ denote the smallest integer such that $\s_i\colon \G_i\to{\hs(\G_{i-1})}$ is an isomorphism for every $i>m$ (Proposition \ref{prop: existence of growth group}). It follows that for $i\gg 0$
	\begin{align*}\dim(G[i]) &=\dim(\G_i)+\cdots+\dim(\G_0)=\\
	& = (i-m+1)\dim(\G_m)+\dim(\G_{m-1})+\cdots+\dim(\G_0)=\\
	& = (i+1)\dim(\G_m)+\dim(\G_{m-1})+\cdots+\dim(\G_0)-m\dim(\G_m)=\\
	& = d(i+1)+e.
	\end{align*}
	By Proposition \ref{prop: existence of growth group} we have $e\geq 0$.
	
	Let us now show that $d=\dim(\G_m)$ is independent of the chosen embedding $G\hookrightarrow\G$. So let $G\hookrightarrow \G'$ be another $\s$-closed embedding. Let $G[i]'$ denote the $i$-th order Zariski closure of $G$ in $\G'$ and let $d_i', d',e'$ have the analogous meaning. We have to show that $d=d'$.
	
	Let $A$ be a finite subset of $k\{G\}$ that generates $k[G[0]]\subseteq k\{G\}$ as a $k$-algebra. Similarly, let $A'$ be a finite subset of $k\{G\}$ that generates $k[G[0]']\subseteq k\{G\}$ as a $k$-algebra. Then $k[G[i]]=k[A,\ldots,\s^i(A)]$ and there exists an integer $n\geq 0$ such that $A'$ lies in $k[G[n]]$. Then $$k[G[i]']=k[A',\ldots,\s^i(A')]\subseteq k[A,\ldots,\s^{n+i}(A)]=k[G[n+i]].$$ Therefore $d_i'\leq d_{n+i}$ and for $i\gg 0$ we have
	$d'(i+1)+e'\leq d(n+i+1)+e$. Letting $i$ tend to infinity we find $d'\leq d$. By symmetry, $d'=d$.
	
	Let us now assume that $d=0$. We have to show that $e$ does not depend on the choice of the embedding $G\hookrightarrow\G$. To do this we will show that
	\begin{equation} \label{eqn: order}
	e=\max\big\{\dim(R)|\ R \text{ is a finitely generated $k$-subalgebra of $k\{G\}$}\big\}.
	\end{equation}
	
	For $i\gg0$ the finitely generated $k$-subalgebra $k[G[i]]$ of $k\{G\}$ has dimension $e$. Conversely, if $R$ is a finitely generated $k$-subalgebra of $k\{G\}$, then $R$ is contained in some $k[G[i]]$ and therefore $\dim(R)\leq e$. This proves (\ref{eqn: order}).
\end{proof}

\begin{defi}
	Let $G$ be a $\s$-algebraic group. The integer $d\geq 0$ defined in Theorem \ref{theo: existence sdim} above is called the \emph{$\s$-dimension of $G$}\index{$\s$-dimension of $G$} and denoted by $\sdim(G).$
	
	If $\sdim(G)=0$, the integer $e\geq 0$ defined in Theorem~\ref{theo: existence sdim} is called the \emph{order of $G$}\index{order of $G$} and denoted by $\ord(G).$ If $G$ has positive $\s$-dimension the order of $G$ is defined to be infinity.
\end{defi}

The order is equivalent to the \emph{total dimension} introduced in \cite{Hrushovski:elementarytheoryoffrobenius} in a somewhat different setting. We have chosen to stick to the more traditional naming from \cite{Levin:difference} and \cite{Cohn:difference}.

\begin{ex}
	Let $n\in\nn$ and let $G$ be the $\s$-closed subgroup of $\Ga^2$ given by
	$$G(R)=\left\{(g_1,g_2)\in R^2|\ \s^n(g_1)=g_2\right\}$$
	for any \ks-algebra $R$. For $i\geq 0$ let $G[i]$ denote the $i$-th order Zariski closure of $G$ in $\Ga$. As $k\{G\}=k[y_1,\ldots,\s^{n-1}(y_1),y_2,\s(y_2),\ldots]$ we see that
	$\dim(G[i])=(i+1)+n$ for $i\gg 0$. In particular, $G$ has $\s$-dimension one.
\end{ex}

\begin{ex}
	Let $\G$ be an algebraic group. Then $\sdim([\s]_k\G)=\dim(\G)$. Indeed, if we tautologically consider $G=[\s]_k\G$ as a $\s$-closed subgroup of $\G$, then \mbox{$G[i]=\G\times\cdots\times{^{\sigma^i}\!\G}$} and so $\dim(G[i])=\dim(\G)(i+1)$ for every $i\geq 0$. This also shows that either $\ord(G)=\infty$ (if $\dim(\G)>0$) or $\ord(G)=0$ (if $\dim(\G)=0$).
\end{ex}
The following example also motivates the naming ``order''.
\begin{ex}
	Let $f=\s^n(y)+\lambda_{n-1}\s^{n-1}(y)+\cdots+\lambda_0y$ be a linear difference equation and $G$ the $\s$-closed subgroup of $\Ga$ defined by $f$. Then
	$\ord(G)=n$, i.e., the order of $G$ equals the order of $f$.
\end{ex}

\begin{ex}
	The $\s$-algebraic group from Example \ref{ex: ssubgroup of Gm} has $\s$-dimension zero and order $\alpha_n$.
\end{ex}

\begin{ex}
	Let $G$ be the $\s$-algebraic group from Example \ref{ex: general constr}. Then $k\{G\}=k[\G\times\cdots\times {}^{\s^{m-1}}\!\G]$ and therefore $\sdim(G)=0$ and $\ord(G)=m\dim(\G)$.
\end{ex}

\begin{ex}
	The $\s$-algebraic group from Example \ref{ex: Frobenius algebraic group} has $\s$-dimension zero and order equal to the dimension of $\G$.
\end{ex}

\begin{ex}
	The $\s$-algebraic group from Example \ref{ex: split finite} has $\s$-dimension zero and order zero.
\end{ex}
From the proof of Theorem \ref{theo: existence sdim}, we immediately obtain:
\begin{cor} \label{cor: sdim equals dim of growth group}
	Let $G$ be a $\s$-algebraic group. Then the dimension of the growth group of $G$ with respect to some $\s$-closed embedding $G\hookrightarrow\G$ of $G$ into some algebraic group $\G$ equals the $\s$-dimension of $G$. In particular, the dimension of the growth group does not depend on the choice of the $\s$-closed embedding $G\hookrightarrow\G$. \qed
\end{cor}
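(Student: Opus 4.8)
The plan is to extract the identity directly from the dimension computation carried out in the proof of Theorem~\ref{theo: existence sdim}. First I would fix a $\s$-closed embedding $G\hookrightarrow\G$ and recall that the growth group is, by definition, the algebraic group $\G_m$, where $m\geq 0$ is the smallest integer such that $\s_i\colon\G_i\to\hs(\G_{i-1})$ is an isomorphism for every $i>m$. This is precisely the index $m$ that appears in the proof of Theorem~\ref{theo: existence sdim}.

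Next I would recall why that proof forces $d=\dim(\G_m)$. Because $\s_i\colon\G_i\to\hs(\G_{i-1})$ is an isomorphism for $i>m$, and base extension along $\s$ preserves dimension, one has $\dim(\G_i)=\dim(\G_{i-1})$ for $i>m$, hence $\dim(\G_i)=\dim(\G_m)$ for all $i\geq m$. Substituting this into the telescoping identity $\dim(G[i])=\sum_{j=0}^i\dim(\G_j)$ gives $d_i=(i+1)\dim(\G_m)+e$ for $i\gg0$, so that the leading coefficient is $d=\dim(\G_m)$. Since $\sdim(G)=d$ by the very definition of the $\s$-dimension, this yields $\dim(\G_m)=\sdim(G)$, which is the asserted equality between the dimension of the growth group and the $\s$-dimension of $G$.

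Finally, the \emph{in particular} clause is immediate: Theorem~\ref{theo: existence sdim} already establishes that $d=\sdim(G)$ depends only on $G$ and not on the choice of $\G$ or of the embedding $G\hookrightarrow\G$. Transporting this through the equality $\dim(\G_m)=d$ shows that the dimension of the growth group is likewise independent of the embedding.

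I expect no genuine obstacle here, since the corollary is in effect a reinterpretation of a quantity ($d=\dim(\G_m)$) already computed en route to Theorem~\ref{theo: existence sdim}. The only point requiring a moment's care is the verification that the integer $m$ defining the growth group coincides with the integer $m$ used in the dimension formula; but both are characterized as the least index beyond which $\s_i$ is an isomorphism, so they agree.
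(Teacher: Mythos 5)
Your proposal is correct and follows exactly the paper's route: the paper derives this corollary directly from the telescoping computation $\dim(G[i])=(i-m+1)\dim(\G_m)+\dim(\G_{m-1})+\cdots+\dim(\G_0)$ in the proof of Theorem~\ref{theo: existence sdim}, which identifies the leading coefficient $d$ with $\dim(\G_m)$, and then invokes the embedding-independence of $d$ proved there. Your check that the integer $m$ in the definition of the growth group agrees with the $m$ in that proof is a sensible precaution, and both are indeed characterized identically.
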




We will show that our notions of dimension and order generalize the classical notions. (See page 394 in \cite{Levin:difference}.)
Classically, the $\s$-dimension of a $\s$-variety $X$ is defined by means of the $\s$-transcendence degree, which however only makes sense if $k\{X\}$ is an integral domain with $\s\colon k\{X\}\to k\{X\}$ injective.
The \emph{$\s$-transcendence degree} of a $\s$-field extension $K/k$ is the largest integer $n\geq 1$ such that the $\s$-polynomial ring $k\{y_1,\ldots,y_n\}$ may be embedded into $K$. (If no such integer exists the $\s$-transcendence degree is infinite.) See Section 4.1 in \cite{Levin:difference} for more details on the $\s$-transcendence degree. 
\begin{prop} \label{prop: characterize sdim}
	Let $G$ be a $\s$-algebraic group such that $k\{G\}$ is a integral domain with $\s\colon k\{G\}\to k\{G\}$ injective. Then the $\s$-dimension of $G$ equals the $\s$-transcendence degree of the field of fractions of $k\{G\}$ over $k$.
	
	If $G$ is a $\s$-algebraic group such that $k\{G\}$ is an integral domain, then the order of $G$ equals the transcendence degree of the field of fractions of $k\{G\}$ over $k$.
\end{prop}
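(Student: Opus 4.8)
The plan is to connect the dimension-theoretic invariants from Theorem \ref{theo: existence sdim} with the $\s$-transcendence degree by exploiting the explicit description of the Zariski closures $G[i]$ in terms of generators. The key observation is that for a $\s$-integral $\s$-algebraic group $G$, one may choose a $\s$-closed embedding $G\hookrightarrow\G$ into an algebraic group (Theorem \ref{theo: linearization}), and then $k\{G\}$ is generated as a $k$-$\s$-algebra by a finite tuple $\overline{a}$, with $k[G[i]]=k[\overline{a},\s(\overline{a}),\ldots,\s^i(\overline{a})]$ sitting inside the $\s$-domain $k\{G\}$. The dimension $d_i=\dim(G[i])$ is then exactly the transcendence degree $\trdeg_k k(\overline{a},\ldots,\s^i(\overline{a}))$ of the field generated by these coordinates, because $G[i]$ is integral (being a closed subgroup whose coordinate ring embeds in the domain $k\{G\}$).

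First I would treat the $\s$-dimension statement. By definition of the $\s$-transcendence degree and its characterization via the dimension polynomial (Theorem 4.2.1 in \cite{Levin:difference}), the $\s$-transcendence degree of $\operatorname{Frac}(k\{G\})$ over $k$ equals the leading coefficient $d$ in the expression $\trdeg_k k(\overline{a},\ldots,\s^i(\overline{a}))=d(i+1)+e$ for $i\gg 0$, where $\overline{a}$ $\s$-generates the field extension. Since Theorem \ref{theo: existence sdim} already establishes that $d=\sdim(G)$ is independent of the embedding and computed precisely from these transcendence degrees $d_i$, the two quantities coincide. The main point to verify carefully is that the classical dimension polynomial of the $\s$-field extension $\operatorname{Frac}(k\{G\})|k$ has the same leading term as our sequence $d_i=\dim(G[i])$; this follows because both count $\trdeg_k k(\overline{a},\ldots,\s^i(\overline{a}))$, using that $\s$ is injective on the domain $k\{G\}$ so that $\s^i(\overline{a})$ behaves like the $i$-th transform in the sense of \cite[Section 4.1]{Levin:difference}.

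For the second statement, I would assume $G$ is integral (so $k\{G\}$ is a domain but $\s$ need not be injective) and that $\ord(G)$ is finite, which forces $\sdim(G)=d=0$. In this case the formula (\ref{eqn: order}) from the proof of Theorem \ref{theo: existence sdim} gives
$$e=\ord(G)=\max\{\dim(R)\mid R \text{ a finitely generated $k$-subalgebra of } k\{G\}\}.$$
Since $k\{G\}$ is a domain and every finitely generated $k$-subalgebra $R$ is an integral domain whose fraction field embeds in $\operatorname{Frac}(k\{G\})$, each $\dim(R)=\trdeg_k\operatorname{Frac}(R)\leq\trdeg_k\operatorname{Frac}(k\{G\})$, giving $e\leq\trdeg_k\operatorname{Frac}(k\{G\})$. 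Conversely, any finite collection of elements of $\operatorname{Frac}(k\{G\})$ algebraically independent over $k$ can be realized, after clearing denominators, inside some finitely generated subalgebra $R$, so $\trdeg_k\operatorname{Frac}(k\{G\})\leq e$. Combining the two inequalities yields equality.

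The step I expect to require the most care is matching our $d_i=\dim(G[i])$ with the transcendence degrees that define the classical $\s$-transcendence degree: one must check that $\dim(G[i])$ genuinely equals $\trdeg_k k(\overline{a},\s(\overline{a}),\ldots,\s^i(\overline{a}))$, which rests on $G[i]$ being geometrically integral over the relevant base (or at least that its coordinate ring's Krull dimension equals this transcendence degree). This is where $\s$-integrality versus mere integrality matters: injectivity of $\s$ ensures the $\s^i(\overline{a})$ form a genuine chain of transforms, so that the classical theory of \cite{Levin:difference} applies directly to $\operatorname{Frac}(k\{G\})|k$.
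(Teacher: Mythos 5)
Your proposal follows essentially the same route as the paper: identify $\dim(G[i])$ with $\trdeg_k k(\overline{a},\ldots,\s^i(\overline{a}))$, recognize $d(i+1)+e$ as the difference dimension polynomial of $\operatorname{Frac}(k\{G\})|k$ and invoke Theorem 4.2.1 of \cite{Levin:difference} for the $\s$-dimension, then use equation (\ref{eqn: order}) for the order. The only (trivial) omission is the case $\ord(G)=\infty$ in the second statement, where one notes that $\sdim(G)>0$ makes $\dim(G[i])$ unbounded and hence $\trdeg_k\operatorname{Frac}(k\{G\})=\infty$ as well.
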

\begin{proof}
	Let us fix a $\s$-closed embedding $G\hookrightarrow\G$. Assume that the finite set $A$ generates $k[G[0]]\subseteq k\{G\}$ as a $k$-algebra. Let $K$ denote the field of fractions of $k\{G\}$.
	Since $\s\colon k\{G\}\to k\{G\}$ is injective, $K$ is naturally a $\s$-field extension of $k$ and $A$ generates $K$ as a $\s$-field extension of $k$. Since $\trdeg(k(A,\ldots,\s^i(A))/k)=\dim(G[i])$, we see that
	$d(t+1)+e$ (with $d$ and $e$ as in Theorem \ref{theo: existence sdim}) is the difference dimension polynomial (Definition 4.2.2 in \cite{Levin:difference}) of the $\s$-field extension $K/k$ associated with $A$. It follows from Theorem 4.2.1 (iii) in \cite{Levin:difference} that $\sdim(G)=d=\strdeg(K/k)$.
	
	For the second claim, let again $K$ denote the field of fractions of $k\{G\}$. If $\sdim(G)>0$, then it is clear from Theorem \ref{theo: existence sdim} that the transcendence degree of $K$ over $k$ is infinite. So $\ord(G)=\trdeg(K/k)$ in this case. If $\sdim(G)=0$, the claim follows from equation (\ref{eqn: order}) above.
\end{proof}

\begin{ex}
	Let $G$ be the $\s$-algebraic group from Example \ref{ex: new}. Then the coordinate ring of $G$ is $k\{G\}=k\{y\}\otimes_k k\{x,x^{-1}\}$. The field of fractions of $k\{G\}$ is the field of fractions of a $\s$-polynomial ring in two $\s$-variables. Therefore the field of fractions of $k\{G\}$ has $\s$-transcendence degree two over $k$. (Cf. \cite[Prop. 4.1.6, p. 248]{Levin:difference}.) Thus $\sdim(G)=2$ by Proposition~\ref{prop: characterize sdim}.
\end{ex}

%
%
%
%

\section{Two finiteness theorems} \label{sec: two finiteness theorems}

There is no direct difference analog of Hilbert's basis theorem: There exist infinite strictly ascending chains of difference ideals in the $\s$-polynomial ring $k\{y_1,\ldots,y_n\}$. (See \cite[Example 3, p. 73]{Cohn:difference}.) The Ritt-Raudenbush basis theorem in difference algebra (\cite{RittRaudenbush:IdealTheoryandAlgebraicDifferenceEquations} or \cite[Theorem 2.5.11]{Levin:difference}) only asserts that every ascending chain of perfect difference ideals in $k\{y_1,\ldots,y_n\}$ is finite. However, in our group theoretic setup the situation is better behaved: Let $A$ be a finitely $\s$-generated $k$-$\s$-Hopf algebra. The first finiteness theorem (Theorem~\ref{theo: finiteness1}) asserts that every $\s$-Hopf ideal of $A$ is finitely generated as a $\s$-ideal. The second finiteness theorem (Theorem \ref{theo: finiteness2}) asserts that every $k$-$\s$-Hopf subalgebra of $A$ is finitely $\s$-generated over $k$. In Section \ref{section: components} we will prove a third finiteness theorem (Theorem \ref{theo: Hrushovskiconj}): The set of minimal prime $\s$-ideals of $A$ is finite.


\begin{theo}  \label{theo: finiteness1}
	Let $H$ be a $\s$-algebraic group and $G\leq H$ a $\s$-closed subgroup. Then the defining ideal $\I(G)\subseteq k\{H\}$ of $G$ is finitely generated as a $\s$-ideal.
\end{theo}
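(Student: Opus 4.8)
The plan is to exploit the group structure, since the naive approach is hopeless: as the section's preamble notes, there is no Hilbert basis theorem for $\s$-ideals, so one cannot argue by Noetherianity alone. The engine will instead be the stabilization of the growth group from Proposition~\ref{prop: existence of growth group}. First I would reduce to the case where the ambient group is an algebraic group. By Theorem~\ref{theo: linearization} there is a $\s$-closed embedding $H\hookrightarrow\G$ into an algebraic group $\G$, and then $G$ is a $\s$-closed subgroup of $\G$. The dual map $k\{\G\}\to k\{H\}$ is a surjective morphism of $k$-$\s$-algebras (Lemma~\ref{lemma: sclosed embedding}) carrying the defining ideal of $G$ in $k\{\G\}$ onto the defining ideal of $G$ in $k\{H\}$; since the image of a finite $\s$-generating set is again a finite $\s$-generating set, it suffices to treat $H=\G$. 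So I assume $G\leq\G$, write $\ida_i=\I(G)\cap k[\G[i]]$ for the defining ideal of the $i$-th order Zariski closure $G[i]$ inside $k[\G[i]]$, and recall that $\I(G)=\bigcup_{i\geq 0}\ida_i$.

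The heart of the argument, and the step I expect to be the main obstacle, is a fiber-product decomposition of $G[i]$ for large $i$. Let $m$ be as in Proposition~\ref{prop: existence of growth group}, so that $\s_i\colon\G_i\to{}^{\s\!}(\G_{i-1})$ is an isomorphism for every $i>m$. For such $i$ I claim that the homomorphism $(\pi_i,\s_i)\colon G[i]\to P$, where $P=G[i-1]\times_{{}^{\s\!}(G[i-2])}{}^{\s\!}(G[i-1])$ (the map lands in the fiber product by the commutativity of~(\ref{eqn: comm diag pi si})), is an isomorphism. It is a closed immersion because $\ker(\pi_i)\cap\ker(\s_i)=\ker(\s_i|_{\G_i})$ is trivial. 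To see surjectivity one compares the two extensions $1\to\G_i\to G[i]\to G[i-1]\to 1$ and $1\to{}^{\s\!}(\G_{i-1})\to P\to G[i-1]\to 1$: the map $(\pi_i,\s_i)$ is the identity on the common quotient $G[i-1]$ and restricts to $\s_i$ on kernels, so the short five lemma forces it to be an isomorphism precisely when $\s_i$ is. The delicate points here are checking that $(\pi_i,\s_i)$ is a monomorphism and that the bottom row is exact with kernel ${}^{\s\!}(\G_{i-1})$.

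Finally I would translate this into ideals and conclude. Since $\G[i]=\G[i-1]\times_{{}^{\s\!}(\G[i-2])}{}^{\s\!}(\G[i-1])$, and the defining ideal of a fiber product of closed subgroups is generated by the two individual defining ideals (a purely algebraic tensor-product computation, valid with no reducedness hypothesis), the decomposition above yields $\ida_i=(\ida_{i-1},\s(\ida_{i-1}))$ in $k[\G[i]]$ for $i>m$, where $\s(\ida_{i-1})$ denotes the defining ideal of ${}^{\s\!}(G[i-1])$. Induction then gives $\ida_i=(\ida_m,\s(\ida_m),\ldots,\s^{i-m}(\ida_m))$ for all $i\geq m$, so that $\I(G)=\bigcup_i\ida_i=[\ida_m]$. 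As $k[\G[m]]$ is a finitely generated $k$-algebra and hence Noetherian, $\ida_m$ is finitely generated as an ordinary ideal, and therefore $\I(G)$ is finitely generated as a $\s$-ideal. Everything apart from the fiber-product decomposition is bookkeeping with the Zariski closures, so the crux is genuinely the passage from the isomorphism of growth-group pieces $\s_i$ to the structural identity $\ida_i=(\ida_{i-1},\s(\ida_{i-1}))$.
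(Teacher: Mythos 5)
Your proof is correct and follows essentially the same route as the paper: your fiber product $P$ is precisely the subgroup $\H_i=(G[i-1]\times{}^{\sigma^i}\!\G)\cap (\G\times{\hs(G[i-1])})$ used there, and your short-five-lemma step is exactly the paper's observation that $G[i]\hookrightarrow\H_i$ and $\pi_i\colon\H_i\twoheadrightarrow G[i-1]$ have the same kernel $\G_i=1\times{\hs(\G_{i-1})}$ for $i>m$. The reduction to $H=\G$, the resulting identity $\I(G[i])=\big(\I(G[i-1]),\s(\I(G[i-1]))\big)$, and the Noetherian conclusion from $\I(G)=[\I(G[m])]$ all match the paper's argument.
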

\begin{proof}
	We may embed $H$ as a $\s$-closed subgroup in some algebraic group $\G$. For example, we may choose $\G=\Gl_n$ by Proposition \ref{prop: linearization}.
	If the defining ideal of $G$ in $k\{\G\}$ is finitely \mbox{$\s$-gen}erated, then also the defining ideal of $G$ in $k\{H\}=k\{G\}/\I(H)$ is finitely $\s$-generated. We can therefore assume that $H=\G$.
	
	As in Section \ref{sec: Zariski closures of difference algebraic groups and the growth group}, let $\G_i$ denote the kernel of the projection $\pi_i\colon G[i]\to G[i-1]$ between the Zariski closures of $G$ in $\G$. By Proposition \ref{prop: existence of growth group} there exists an integer $m\geq 1$ such that
	$\s_i\colon \G_i\to {\hs(\G_{i-1})}$ is an isomorphism for every $i> m$.
	To prove the theorem we will show that $\I(G[m])=\I(G)\cap k[\G[m]]$ $\s$-generates $\I(G)\subseteq k\{\G\}=\cup_{i\geq 0}k[\G[i]]$. To do this it is sufficient to show that
	\begin{equation} \label{eqn: sideals of Gi}
	\I(G[i])=\big(\I(G[i-1]),\s(\I(G[i-1]))\big)\subseteq k[\G[i]]
	\end{equation}
	for $i> m$.
	The ideal to the right-hand side of (\ref{eqn: sideals of Gi}) defines an algebraic group
	$$\H_i=(G[i-1]\times{}^{\sigma^i}\!\G)\cap (\G\times{\hs(G[i-1])})\leq \G[i]=\G\times{^{\sigma}\!\G}\times\cdots\times{^{\sigma^i}\!\G}.$$
	Clearly $G[i]\leq\H_i$ and the projection $\pi_i\colon \H_i\to G[i-1]$ is a quotient map, i.e., the dual map is injective.

	The kernel of $\pi_i\colon \H_i\to G[i-1]$ is $1\times {\hs(\G_{i-1})}$. Since $i>m$ we have $1\times {\hs(\G_{i-1})}=\G_i$. Thus the downwards arrows
	in the commutative diagram
	\[\xymatrix{
		G[i] \ar@{^{(}->}[rr] \ar_-{\pi_i}[rd] & & \H_i \ar^-{\pi_i}[ld] \\
		& G[i-1] &
	}
	\]
	are both quotient maps with the same kernel. This implies that $G[i]=\H_i$ and identity (\ref{eqn: sideals of Gi}) is proved.
\end{proof}

We have actually proved a slightly stronger statement which we record for later use.
\begin{cor} \label{cor: finiteness1}
	Let $\G$ be an algebraic group and $G\leq\G$ a $\s$-closed subgroup. For $i\geq 0$ let $G[i]$ denote the $i$-th order Zariski closure of $G$ in $\G$. Then there exists an integer $m\geq 0$ such that $\I(G[i])=(\I(G[i-1],\s(\I(G[i-1]))$ for $i>m$, i.e.,
	$G[i]=(G[i-1]\times{}^{\sigma^i}\!\G)\cap (\G\times{\hs(G[i-1])}).$ \qed
\end{cor}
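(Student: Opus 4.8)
The plan is to recognize this corollary as the identity (\ref{eqn: sideals of Gi}) already isolated inside the proof of Theorem \ref{theo: finiteness1}, now read directly for the $\s$-closed subgroup $G\leq\G$. Concretely, I would fix the kernels $\G_i=\ker(\pi_i\colon G[i]\twoheadrightarrow G[i-1])$ of the projections between Zariski closures (as in Section \ref{sec: Zariski closures of difference algebraic groups and the growth group}) and apply Proposition \ref{prop: existence of growth group} to produce an integer $m\geq 0$ such that $\s_i\colon\G_i\to{\hs(\G_{i-1})}$ is an isomorphism for every $i>m$. After translating the $\s$-ideal $(\I(G[i-1]),\s(\I(G[i-1])))\subset k[\G[i]]$ into the closed subgroup
\[
\H_i=(G[i-1]\times{}^{\sigma^i}\!\G)\cap(\G\times{\hs(G[i-1])})\leq\G[i]
\]
that it cuts out, the asserted identity (\ref{eqn: sideals of Gi}) becomes the equality of closed subgroups $G[i]=\H_i$ for $i>m$, and this is what I would prove.

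The inclusion $G[i]\leq\H_i$ holds for all $i\geq 1$ with no condition on $i$: since $\I(G[i-1])=\I(G)\cap k[\G[i-1]]\subset\I(G[i])$ and, because $\I(G)$ is a $\s$-ideal, also $\s(\I(G[i-1]))\subset\s(\I(G))\subset\I(G)$ while $\s(\I(G[i-1]))\subset k[\G[i]]$, both families of generators of the defining ideal of $\H_i$ lie in $\I(G[i])$. Equivalently, $G[i]$ is carried into $G[i-1]$ both by $\pi_i$ and by $\s_i$, which says exactly that $G[i]\leq\H_i$.

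For the reverse inclusion I would compare the two surjections onto $G[i-1]$ furnished by $\pi_i$. Both $\pi_i\colon G[i]\to G[i-1]$ and $\pi_i\colon\H_i\to G[i-1]$ are surjective: for $G[i]$ the dual map is the inclusion $k[G[i-1]]\hookrightarrow k[G[i]]$ of subrings of $k\{G\}$, and for $\H_i$ one lifts a point of $G[i-1]$ through the surjection ${\hs\pi_{i-1}}\colon{\hs(G[i-1])}\to{\hs(G[i-2])}$. Their kernels agree when $i>m$: the kernel of $\pi_i\colon\H_i\to G[i-1]$ is $1\times{\hs(\G_{i-1})}$, living in the last factor ${}^{\sigma^i}\!\G$, while the kernel of $\pi_i\colon G[i]\to G[i-1]$ is $\G_i$, and the isomorphism $\s_i\colon\G_i\xrightarrow{\sim}{\hs(\G_{i-1})}$ of Proposition \ref{prop: existence of growth group} identifies these as one and the same closed subgroup of $\G[i]$. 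Since $G[i]\leq\H_i$ sits over the same base $G[i-1]$ with the same kernel, I would conclude $G[i]=\H_i$.

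The one point needing care---and the main obstacle---is that this last step is an equality of closed subschemes, not merely of points, and $\G$ need not be reduced. I would therefore make the comparison scheme-theoretically: a surjective homomorphism of algebraic groups over $k$ is faithfully flat, so the closed immersion $G[i]\hookrightarrow\H_i$ of groups faithfully flat over $G[i-1]$ is an isomorphism as soon as it restricts to an isomorphism on the fibers over the identity, that is, on the scheme-theoretic kernels---which is precisely the content of $\s_i\colon\G_i\cong{\hs(\G_{i-1})}$. All remaining verifications (the description of $\H_i$, the location of the kernels in the last factor, the surjectivity claims) are the bookkeeping already carried out in the proof of Theorem \ref{theo: finiteness1}.
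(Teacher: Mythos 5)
Your proposal is correct and follows essentially the same route as the paper: the corollary is exactly the identity (\ref{eqn: sideals of Gi}) isolated in the proof of Theorem \ref{theo: finiteness1}, established by introducing $\H_i=(G[i-1]\times{}^{\sigma^i}\!\G)\cap(\G\times{\hs(G[i-1])})$, checking $G[i]\leq\H_i$, and comparing the two surjections onto $G[i-1]$, whose kernels coincide for $i>m$ via $\s_i\colon\G_i\xrightarrow{\sim}{\hs(\G_{i-1})}$ from Proposition \ref{prop: existence of growth group}. Your added justification of the final step (equal kernels over the same base force the closed immersion $G[i]\hookrightarrow\H_i$ to be an isomorphism) is a correct elaboration of what the paper leaves implicit.
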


\begin{cor} \label{cor: descending chain}
	Every descending chain of $\s$-closed subgroups of a $\s$-algebraic group is finite.
\end{cor}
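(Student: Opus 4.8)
The plan is to translate the statement into the language of $\s$-Hopf ideals and then invoke the first finiteness theorem. A descending chain of $\s$-closed subgroups $G\geq H_1\geq H_2\geq\cdots$ of a $\s$-algebraic group $G$ corresponds, via Lemma \ref{lemma: sclosed sungroup corresponds to sHopfideal}, to an ascending chain of $\s$-Hopf ideals $\I(H_1)\subset\I(H_2)\subset\cdots$ in $k\{G\}$. Since this correspondence between $\s$-closed subgroups and $\s$-Hopf ideals is a bijection, it suffices to show that the ascending chain of $\s$-Hopf ideals must stabilize.

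First I would form the union $\ida=\bigcup_{j\geq 1}\I(H_j)$ and check that $\ida$ is again a $\s$-Hopf ideal of $k\{G\}$. The defining conditions of a $\s$-Hopf ideal, namely being a $\s$-ideal together with $\Delta(\ida)\subset\ida\otimes_k k\{G\}+k\{G\}\otimes_k\ida$, $\varepsilon(\ida)=0$ and $S(\ida)\subset\ida$, are all conditions that can be verified one element at a time, and each element of $\ida$ already lies in some $\I(H_j)$; hence these conditions pass to the ascending union. By Lemma \ref{lemma: sclosed sungroup corresponds to sHopfideal} the $\s$-Hopf ideal $\ida$ corresponds to a $\s$-closed subgroup, so $\ida=\I(H)$ for some $\s$-closed subgroup $H$ of $G$.

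The key step is to apply Theorem \ref{theo: finiteness1}, taking the ambient group to be $G$ itself: the defining ideal $\ida=\I(H)$ is finitely generated as a $\s$-ideal, say $\ida=[f_1,\ldots,f_r]$ with $f_1,\ldots,f_r\in\ida$. Each generator $f_\ell$ lies in some $\I(H_{j_\ell})$, so setting $N=\max_\ell j_\ell$ we have $f_1,\ldots,f_r\in\I(H_N)$. Therefore $\ida=[f_1,\ldots,f_r]\subset\I(H_N)\subset\ida$, which forces $\I(H_N)=\ida$ and hence $\I(H_j)=\ida$ for all $j\geq N$. By the injectivity of $H\mapsto\I(H)$ this yields $H_j=H_N$ for all $j\geq N$, so the chain is finite.

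The argument is essentially a Hopf-algebraic repackaging of the equivalence between a Noetherian-type finiteness condition and the ascending chain condition; the genuine content is supplied entirely by Theorem \ref{theo: finiteness1}, which guarantees finite $\s$-generation of the limiting $\s$-Hopf ideal. The only point requiring a little care is verifying that the union of the chain of $\s$-Hopf ideals is again a $\s$-Hopf ideal, so that the finiteness theorem is applicable to it; since each of the relevant closure conditions is pointwise, this presents no real obstacle.
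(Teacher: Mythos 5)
Your proof is correct and follows exactly the paper's route: translate the descending chain of subgroups into an ascending chain of $\s$-Hopf ideals, observe that the union is again a $\s$-Hopf ideal (corresponding to the intersection $\bigcap H_i$), and apply Theorem \ref{theo: finiteness1} to get finite $\s$-generation and hence stabilization. Your write-up merely makes explicit the pointwise verification that the union is a $\s$-Hopf ideal, which the paper leaves implicit.
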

\begin{proof}
	A descending chain $H_1\geq H_2\geq\cdots$ of $\s$-closed subgroups of a $\s$-algebraic group $G$ corresponds to an ascending chain
	$\I(H_1)\subseteq\I(H_2)\subseteq\cdots$ of $\s$-Hopf ideals in $k\{G\}$. By Theorem \ref{theo: finiteness1} the union $\bigcup\I(H_i)$ (which corresponds to the intersection $\bigcap H_i$) is finitely generated as a $\s$-ideal. Thus there exists an integer $n\geq 1$ such that $\bigcup\I(H_i)=\I(H_n)$. Then $H_{i+1}=H_{i}$ for $i\geq n$.
\end{proof}

To prove the second finiteness theorem we need a lemma on $k$-$\s$-Hopf algebras.

\begin{lemma} \label{lemma: directed union for ksHopfalgebras}
	Let $A$ be a $k$-$\s$-Hopf algebra. Then every finite subset of $A$ is contained in a finitely $\s$-generated $k$-$\s$-Hopf subalgebra.
\end{lemma}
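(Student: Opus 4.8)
The plan is to combine the corresponding statement for ordinary commutative Hopf algebras with a $\s$-saturation argument. First I would forget the difference structure and invoke the classical fact, already used in the proof of Theorem \ref{theo: linearization}, that in a commutative Hopf algebra every finite subset is contained in a Hopf subalgebra which is finitely generated as a $k$-algebra (\cite[Section 3.3, p. 24]{Waterhouse:IntroductiontoAffineGroupSchemes}). Applying this to the given finite subset $S\subseteq A$ yields a Hopf subalgebra $B_0\subseteq A$ with $S\subseteq B_0$ and $B_0$ finitely generated as a $k$-algebra. Note that $B_0$ need not be stable under $\s$.

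Next I would form $B=k\{B_0\}$, the smallest $k$-$\s$-subalgebra of $A$ containing $B_0$; concretely $B$ is the $k$-subalgebra of $A$ generated by $\bigcup_{i\geq 0}\s^i(B_0)$. Since $B_0$ is finitely generated as a $k$-algebra, $B$ is finitely $\s$-generated over $k$, and by construction $B$ is stable under $\s$. It then remains only to check that $B$ is a Hopf subalgebra, i.e.\ that it is stable under the structure maps $\Delta$, $S$ and $\varepsilon$.

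The key point here is that $\s\colon A\to A$ is a morphism of Hopf algebras: by the definition of a $k$-$\s$-Hopf algebra the maps $\Delta$, $S$ and $\varepsilon$ commute with $\s$, where $\s$ acts diagonally on $A\otimes_k A$, so that $\Delta\s=(\s\otimes\s)\Delta$ and $S\s=\s S$. Thus for $b\in B_0$ with $\Delta(b)=\sum_j b_j'\otimes b_j''\in B_0\otimes_k B_0$ one has $\Delta(\s^i(b))=\sum_j\s^i(b_j')\otimes\s^i(b_j'')\in\s^i(B_0)\otimes_k\s^i(B_0)\subseteq B\otimes_k B$, and similarly $S(\s^i(b))=\s^i(S(b))\in\s^i(B_0)\subseteq B$. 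Because $k$ is a field, $B\otimes_k B$ embeds as a subalgebra of $A\otimes_k A$; since $\Delta$ is a $k$-algebra homomorphism sending the algebra generators $\s^i(B_0)$ of $B$ into $B\otimes_k B$, we obtain $\Delta(B)\subseteq B\otimes_k B$, and likewise $S(B)\subseteq B$ (using that $S$ is an algebra (anti)homomorphism), while $\varepsilon(B)\subseteq k$ is automatic. Hence $B$ is the desired finitely $\s$-generated $k$-$\s$-Hopf subalgebra containing $S$.

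The only genuinely non-trivial input is the classical finiteness statement for commutative Hopf algebras; once that is available, the $\s$-saturation $B=k\{B_0\}$ is forced to remain a Hopf subalgebra precisely because $\s$ respects the comultiplication and the antipode. The step requiring the most care is verifying $\Delta(B)\subseteq B\otimes_k B$ rather than merely $\Delta(B)\subseteq A\otimes_k A$, and this is exactly where the compatibility $\Delta\s=(\s\otimes\s)\Delta$ enters.
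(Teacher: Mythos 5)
Your proof is correct and follows essentially the same route as the paper: invoke the classical finiteness result from Waterhouse to get a finitely generated Hopf subalgebra $B_0$ containing the given finite set, then pass to the $\s$-saturation $k\{B_0\}$ and observe that it remains a Hopf subalgebra because $\Delta$ and $S$ commute with $\s$. The paper states this in two lines; you have merely spelled out the verification that $\Delta(B)\subseteq B\otimes_k B$ in more detail.
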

\begin{proof}
	By \cite[Section 3.3]{Waterhouse:IntroductiontoAffineGroupSchemes} a finite subset of $A$ is contained in a Hopf subalgebra $B$ that is finitely generated as a $k$-algebra. Then $k\{B\}\subseteq A$ is finitely $\s$-generated over $k$ and since the comultiplication and the antipode are $\s$-morphisms, $k\{B\}$ is a Hopf subalgebra.
\end{proof}

\begin{theo} \label{theo: finiteness2}
	Let $A$ be a $k$-$\s$-Hopf algebra that is finitely $\s$-generated over $k$ and $B\subseteq A$ a $k$-$\s$-Hopf subalgebra. Then $B$ is finitely $\s$-generated over $k$.
\end{theo}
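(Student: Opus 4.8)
The plan is to interpret the statement geometrically and reduce it to the first finiteness theorem together with the faithful flatness of commutative Hopf algebras over Hopf subalgebras. Write $A=k\{G\}$ for the $\s$-algebraic group $G$, and consider the group functor $H=\sps(B)=\Hom(B,-)$ together with the morphism $q\colon G\to H$ dual to the inclusion $B\hookrightarrow A$. Let $N$ be the kernel of $q$, i.e.\ the functor $R\rightsquigarrow\ker(G(R)\to H(R))$. As in the classical theory $N$ is represented by $A/B^+A$, where $B^+=\ker(\varepsilon|_B)$: indeed $B^+A$ is a $\s$-ideal (because $B^+$ is stable under $\s$), and a direct computation with the comultiplication shows $\Delta(B^+)\subset B^+\otimes A+A\otimes B^+$, so $B^+A$ is in fact a $\s$-Hopf ideal. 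Thus $N\leq G$ is a (normal) $\s$-closed subgroup with $\I(N)=B^+A$. By Theorem~\ref{theo: finiteness1}, $\I(N)=B^+A$ is finitely generated as a $\s$-ideal of $A$.

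First I would descend this finiteness from $A$ to $B$. Over the field $k$ every commutative Hopf algebra is faithfully flat over any Hopf subalgebra (a theorem of Takeuchi); in particular $A$ is faithfully flat over $B$. Choosing finitely many $\s$-generators of $B^+A$ and expanding them as $A$-linear combinations of elements of $B^+$ produces a finite set $S\subset B^+$ with $B^+A=[S]_A$. Let $J=[S]_B\subset B^+$ be the $\s$-ideal of $B$ generated by $S$. Tensoring the inclusion $J\hookrightarrow B^+$ with the faithfully flat $B$-algebra $A$ gives $JA=[S]_A=B^+A=B^+\!\otimes_B A$, so $J\hookrightarrow B^+$ becomes an isomorphism after $-\otimes_B A$ and hence was already an isomorphism. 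Therefore $B^+=[S]_B$ is finitely generated as a $\s$-ideal of $B$.

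Next I would upgrade ``$B^+$ finitely $\s$-generated as an ideal'' to ``$B$ finitely $\s$-generated as an algebra''. By Lemma~\ref{lemma: directed union for ksHopfalgebras} the finite set $S$ lies in a finitely $\s$-generated $k$-$\s$-Hopf subalgebra $B_0\subset B$. Since the counit of $B$ restricts to that of $B_0$, we have $S\subset B_0^+=B^+\cap B_0$, whence the $\s$-ideal $B_0^+B$ of $B$ contains $[S]_B=B^+$ and is visibly contained in $B^+$; thus $B_0^+B=B^+$ and $B\otimes_{B_0}k=B/B_0^+B=B/B^+=k$. Applying the faithful flatness theorem once more, $B$ is faithfully flat over $B_0$, and the structure theory of faithfully flat Hopf algebra extensions (the Galois map $B\otimes_{B_0}B\xrightarrow{\sim}B\otimes_k(B/B_0^+B)$ is an isomorphism) shows that a trivial quotient forces $B_0=B$: here the Galois map becomes the multiplication $B\otimes_{B_0}B\to B$, its injectivity yields $b\otimes 1=1\otimes b$ for all $b\in B$, and faithfully flat descent then gives $B=B_0$. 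Hence $B=B_0$ is finitely $\s$-generated.

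The difference-specific ingredients (Theorem~\ref{theo: finiteness1} and Lemma~\ref{lemma: directed union for ksHopfalgebras}) enter almost formally, so the main obstacle is importing the correct commutative-Hopf-algebra input and checking that it is compatible with the difference structure: one must verify that $N$ is genuinely a $\s$-closed subgroup, that $B_0$ can be taken to be a $\s$-Hopf subalgebra, and that the descent arguments respect $\s$. The faithful flatness of $A$ over $B$ and of $B$ over $B_0$, together with the ``trivial quotient implies isomorphism'' principle, is where the real weight of the argument lies, since flatness is not something the difference structure alone provides.
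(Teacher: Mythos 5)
Your proof is correct and follows essentially the same route as the paper's: apply Theorem~\ref{theo: finiteness1} to the $\s$-Hopf ideal $B^+A=(\m_B)\subset A$, use Lemma~\ref{lemma: directed union for ksHopfalgebras} to capture a finite generating set inside a finitely $\s$-generated $k$-$\s$-Hopf subalgebra $B_0\subset B$, and conclude from the injectivity of the assignment sending a Hopf subalgebra to its extended augmentation ideal. The only differences are that the paper skips your middle descent step by comparing $(\m_{B_0})$ and $(\m_B)$ directly as ideals of $A$, and it cites Takeuchi's Corollary~3.10 for the injectivity where you reprove it via the Galois map and faithfully flat descent.
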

\begin{proof}
	For a Hopf subalgebra $C$ of $A$ let $\m_C\subseteq C$ denote the kernel of the counit $\varepsilon\colon C\to k$. The ideal $(\m_B)\subseteq A$ is a $\s$-Hopf ideal. By Theorem \ref{theo: finiteness1} it is finitely $\s$-generated. So there exists a finite set $F\subseteq \m_B$ such that $[F]=(\m_B)$.
	By Lemma \ref{lemma: directed union for ksHopfalgebras} there exists a finitely $\s$-generated $k$-$\s$-Hopf subalgebra $C$ of $B$ containing $F$.
	Then $(\m_C)=(\m_B)$. By Corollary~3.10 in \cite{Takeuchi:ACorrespondenceBetweenHopfidealsAndSubHopfalgebras} the mapping $C\mapsto (\m_C)$ from Hopf subalgebras to Hopf ideals is injective. Thus $B=C$ is finitely $\s$-generated over $k$.
\end{proof}

%
%
%
%
%
%
%
%

As an application of Corollary \ref{cor: finiteness1}, we will prove a dimension theorem for $\s$-algebraic groups.
 A dimension theorem for differential algebraic groups has been proved in \cite{Sit:TypicalDifferentialDimensionOfTheIntersectionOfLinearDifferentialAlgebraicGroups}.
It is interesting to note that the dimension theorem fails for difference varieties. See \cite[Chapter 8, Section 8]{Cohn:difference}. 

\begin{theo} \label{theo: dimension theorem}
	Let $H_1$ and $H_2$ be $\s$-closed subgroups of a $\s$-algebraic group $G$. Then
	\begin{equation} \label{eqn: sdimintersect}
	\sdim(H_1\cap H_2)+\sdim(G)\geq \sdim(H_1)+\sdim(H_2)\end{equation} and
	\begin{equation} \label{eqn: orderintersect} \ord(H_1\cap H_2)+\ord(G) \geq \ord(H_1)+\ord(H_2).\end{equation}
\end{theo}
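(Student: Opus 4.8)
The plan is to reduce the dimension theorem for $\s$-algebraic groups to the classical dimension theorem for algebraic groups by passing to the Zariski closures $H_1[i]$, $H_2[i]$, $G[i]$ inside a common ambient algebraic group $\G$. First I would embed $G$ as a $\s$-closed subgroup of some algebraic group $\G$ using Theorem \ref{theo: linearization}, so that $H_1$, $H_2$ and $G$ all become $\s$-closed subgroups of $\G$ and their $i$-th order Zariski closures $H_1[i]$, $H_2[i]$, $G[i]$ are genuine closed subgroups of $\G[i]$. The key point is that taking $i$-th order Zariski closures turns the intersection of $\s$-closed subgroups into (at worst) the intersection of algebraic subgroups; the classical dimension theorem for affine algebraic groups states that for closed subgroups $K_1,K_2$ of an algebraic group $\Gamma$ one has $\dim(K_1\cap K_2)+\dim(\Gamma)\geq\dim(K_1)+\dim(K_2)$ (on each irreducible component the codimensions are subadditive). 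Applying this in $\Gamma=G[i]$ to the subgroups $H_1[i]$ and $H_2[i]$ would give a relation between $\dim(H_1[i]\cap H_2[i])$ and the $d_i$'s for $H_1$, $H_2$, $G$.

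The main obstacle is the comparison between $(H_1\cap H_2)[i]$ and $H_1[i]\cap H_2[i]$. These need not be equal: the $i$-th order Zariski closure of the intersection is always contained in the intersection of the $i$-th order Zariski closures, i.e. $(H_1\cap H_2)[i]\subseteq H_1[i]\cap H_2[i]$, but the reverse inclusion can fail for small $i$. The crucial observation to push through is that this discrepancy does not affect the \emph{leading coefficient} $d$ (and, when $d=0$, the constant $e$) of the dimension polynomial. So the strategy is to prove $\dim\big((H_1\cap H_2)[i]\big)$ and $\dim\big(H_1[i]\cap H_2[i]\big)$ have the same eventual value up to the relevant order, or more robustly, to obtain the inequality directly from $\dim\big((H_1\cap H_2)[i]\big)\leq\dim\big(H_1[i]\cap H_2[i]\big)$ together with the classical dimension inequality applied to $H_1[i],H_2[i]\leq G[i]$.

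Concretely I would argue as follows. For each $i\gg 0$ the classical dimension theorem in $G[i]$ gives
\begin{equation*}
\dim\big(H_1[i]\cap H_2[i]\big)+\dim(G[i])\geq\dim(H_1[i])+\dim(H_2[i]).
\end{equation*}
Now $\dim(H_1[i])=\sdim(H_1)(i+1)+e_1$ and similarly for $H_2$ and $G$ for $i\gg 0$ by Theorem \ref{theo: existence sdim}. The remaining task is to bound $\dim\big(H_1[i]\cap H_2[i]\big)$ from above by something whose leading behaviour is governed by $H_1\cap H_2$. Since $H_1[i]\cap H_2[i]$ is the Zariski closure in $\G[i]$ of the closed subgroup cut out by $\I(H_1)+\I(H_2)$ intersected with $k[\G[i]]$, and since $\I(H_1\cap H_2)$ is the $\s$-ideal generated by $\I(H_1)$ and $\I(H_2)$ (as noted after Lemma \ref{lemma: correspondence sideal ssubvarriety}), one sees that $H_1[i]\cap H_2[i]$ and $(H_1\cap H_2)[i]$ differ only by finitely many extra ``lower order'' relations coming from $\s$-shifts; Corollary \ref{cor: finiteness1} guarantees that beyond a fixed threshold $m$ no genuinely new relations appear at the top level, so the two dimension sequences agree to the precision needed.

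Substituting $\dim\big(H_1[i]\cap H_2[i]\big)=\sdim(H_1\cap H_2)(i+1)+O(1)$ into the displayed inequality and dividing by $i+1$, then letting $i\to\infty$, yields $\sdim(H_1\cap H_2)+\sdim(G)\geq\sdim(H_1)+\sdim(H_2)$, which is \eqref{eqn: sdimintersect}. For the order inequality \eqref{eqn: orderintersect} I would assume all the $\s$-dimensions involved are zero (otherwise both sides may be infinite and the inequality is vacuous or immediate), in which case the dimension polynomials are eventually constant, equal to the respective orders by definition, and the same classical dimension inequality applied for $i\gg 0$ gives the constant-term inequality $\ord(H_1\cap H_2)+\ord(G)\geq\ord(H_1)+\ord(H_2)$ directly. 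The delicate part throughout is justifying that the Zariski-closure-of-intersection versus intersection-of-Zariski-closures discrepancy is harmless at the level of the relevant coefficient; I expect this to be where Corollary \ref{cor: finiteness1} does the real work.
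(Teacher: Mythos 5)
Your strategy for the inequality \eqref{eqn: sdimintersect} is essentially the paper's: fix an embedding $G\hookrightarrow\G$, apply the classical dimension theorem to $H_1[i],H_2[i]\leq G[i]$, and use Corollary \ref{cor: finiteness1} to control the discrepancy between $(H_1\cap H_2)[i]$ and $H_1[i]\cap H_2[i]$. You correctly locate where the work is: the paper makes your vague ``the two dimension sequences agree to the precision needed'' precise by applying Corollary \ref{cor: finiteness1} to $H_1\cap H_2$ to get $\I((H_1\cap H_2)[i])=\big(\I((H_1\cap H_2)[i-1]),\s(\I((H_1\cap H_2)[i-1]))\big)$ for $i>m$, then choosing $n\geq m$ with $\I((H_1\cap H_2)[m])\subset\I(H_1[n]\cap H_2[n])$ (possible because $\I(H_1\cap H_2)=\I(H_1)+\I(H_2)$), and propagating by $\s$-shifts to obtain $\I((H_1\cap H_2)[m+i])\subset\I(H_1[n+i]\cap H_2[n+i])$ for all $i\geq0$. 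This yields $\dim(H_1[n+i]\cap H_2[n+i])\leq\dim((H_1\cap H_2)[m+i])+(n-m)\dim(\G)$, which combined with the classical inequality gives \eqref{eqn: sdimintersect} after comparing leading coefficients. Modulo spelling this out, your argument for the $\s$-dimension part goes through.

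The order inequality is where you have a genuine gap. You claim the classical dimension inequality ``gives the constant-term inequality directly,'' but the comparison you have between $\dim((H_1\cap H_2)[m+i])$ and $\dim(H_1[n+i]\cap H_2[n+i])$ carries an index shift from $m+i$ to $n+i$ and an additive error of up to $(n-m)\dim(\G)$. That error is invisible at the level of the leading coefficient but is exactly of the same size as the constant term you are trying to control, and $\dim(\G)$ is typically positive (e.g.\ $\G=\Gl_n$) even when $\sdim(G)=0$; so the naive substitution only gives $\ord(H_1\cap H_2)\geq\ord(H_1)+\ord(H_2)-\ord(G)-(n-m)\dim(\G)$, which is strictly weaker than \eqref{eqn: orderintersect}. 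The paper closes this by a finer argument: the containment $\I((H_1\cap H_2)[m+i])\subset\I(H_1[n+i]\cap H_2[n+i])$, intersected back with $k[\G[m+i]]$, shows the projection $\f\colon H_1[n+i]\cap H_2[n+i]\to(H_1\cap H_2)[m+i]$ is surjective, and since $\sdim(H_1)=0$ the kernel of $H_1[n+i]\twoheadrightarrow H_1[m+i]$ is finite for $i\gg0$, hence $\f$ has finite kernel; therefore $\dim((H_1\cap H_2)[m+i])=\dim(H_1[n+i]\cap H_2[n+i])$ exactly, and the classical inequality then delivers the constant-term bound. Without some such equality (rather than an inequality up to $(n-m)\dim(\G)$), your order argument does not close.
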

\begin{proof}
	Let $\G$ be an algebraic group containing $G$ as a $\s$-closed subgroup. We consider the Zariski closures inside $\G$.
	By Corollary \ref{cor: finiteness1} there exists an integer $m\geq 0$ such that
	$$\I((H_1\cap H_2)[i])=\big(\I((H_1\cap H_2)[i-1]),\s(\I((H_1\cap H_2)[i-1]))\big)$$ for $i>m$. Since $\I(H_1\cap H_2)=\I(H_1)+\I(H_2)$ there exists $n\geq m$ such that $$\I((H_1\cap H_2)[m])\subseteq \I(H_1[n])+\I(H_2[n])=\I(H_1[n]\cap H_2[n]).$$ But then
	\begin{equation} \label{eqn: proof}\I((H_1\cap H_2)[m+i])\subseteq \I(H_1[n+i]\cap H_2[n+i])\end{equation} for every $i\geq 0$ and so
	$$H_1[n+i]\cap H_2[n+i]\leq (H_1\cap H_2)[m+i]\times{{}^{\s^{m+i+1}}\!\G}\times\cdots\times {}^{\s^{n+i}}\!\G.$$
	Therefore
	\begin{equation*}  \dim(H_1[n+i]\cap H_2[n+i])\leq \dim((H_1\cap H_2)[m+i])+(n-m)\dim(\G) \end{equation*}
	and so
	\begin{align*}
	\dim((H_1\cap & H_2)[m+i])  \geq \dim(H_1[n+i]\cap H_2[n+i])-(n-m)\dim(\G) \\
	& \geq \dim(H_1[n+i])+\dim(H_2[n+i])-\dim(G[n+i])-(n-m)\dim(\G).
	\end{align*}
	Now using Theorem \ref{theo: existence sdim} and comparing the coefficients of $i$ yields identity (\ref{eqn: sdimintersect}).
	
	It remains to prove (\ref{eqn: orderintersect}). Obviously this is true if $\ord(G)=\infty$. So we can assume $\ord(G)<\infty$, i.e., $\sdim(G)=0$. Note that (\ref{eqn: proof}) implies that the intersection of \mbox{$\I(H_1[n+i]\cap H_2[n+i])$} with $k[\G[m+i]]$ equals $\I((H_1\cap H_2)[m+i])$. This means that the morphism of algebraic groups
	$$\f\colon H_1[n+i]\cap H_2[n+i]\to (H_1\cap H_2)[m+i]$$
	induced from the projection $\G\times{{}^{\s}\!\G}\times\cdots\times {}^{\s^{n+i}}\!\G\to \G\times{{}^{\s}\!\G}\times\cdots\times {}^{\s^{m+i}}\!\G$ is
	a quotient map. Since $\sdim(H_1)=0$, the kernel of the projections $H_1[n+i]\to H_1[m+i]$ is finite for $i\gg 0$. Therefore, also $\f$ has finite kernel and it follows that for $i\gg 0$
	\begin{align*}\ord(H_1\cap H_2)&=\dim((H_1\cap H_2)[m+i]) =\dim(H_1[n+i]\cap H_2[n+i]) \\
	& \geq \dim(H_1[n+i])+\dim(H_2[n+i])-\dim(G[n+i])\\
	& =\ord(H_1)+\ord(H_2)-\ord(G).
	\end{align*}
\end{proof}

\section{The limit degree} \label{sec: limit degree}

The limit degree is a classical and important numerical invariant of extensions of difference fields. See \cite[Section 4.3]{Levin:difference}. Based on the results of Section \ref{sec: Zariski closures of difference algebraic groups and the growth group}, we introduce the limit degree of a difference algebraic group. A connection between the two notions is that the limit degree of a $\s$-Picard-Vessiot extension (in the sense of \cite{DiVizioHardouinWibmer:DifferenceGaloisTheoryOfLinearDifferentialEquations}) agrees with the limit degree of the corresponding $\s$-Galois group. We also note that, in our restricted affine setting, difference algebraic groups of limit degree one correspond to \emph{algebraic $\s$-groups} (\cite{KowalskiPillay:OnAlgebraicSigmaGroups}).
%

By the \emph{size} $|\G|$ of an algebraic group $\G$ we mean the dimension of $k[\G]$ as a $k$-vector space. So the size is either a non-negative integer or $\infty$ (and may exceed $|\G(k)|$). In the sequel we will employ the usual rules for calculating with the symbol $\infty$.
E.g., if $\G_1\xrightarrow{\f_1}\G_2\xrightarrow{\f_2} \G_3$
 are quotient maps of algebraic groups, then
\begin{equation}\label{eqn: multiplicativity of size}
|\ker(\f_2\circ\f_1)|=|\ker(\f_2)|\cdot |\ker(\f_1)|.
\end{equation}

In Section \ref{sec: Zariski closures of difference algebraic groups and the growth group} we have seen that the dimension of the growth group of a $\s$-closed subgroup $G$ of an algebraic group $\G$ only depends on $G$. The following proposition shows that the same is true for the size of the growth group.

\begin{prop} \label{prop: existence of limit degree}
	Let $G$ be a $\s$-algebraic group and $G\hookrightarrow\G$ a $\s$-closed embedding. Then the size of the growth group of $G$ with respect to the embedding $G\hookrightarrow\G$ does not depend on the choice of $\G$ and the $\s$-closed embedding.
\end{prop}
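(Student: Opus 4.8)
The plan is to reduce everything to the \emph{limit degree} $\ell$, by which I mean the eventual constant value of $|\G_i|$. Since $\s_i\colon\G_i\to{\hs(\G_{i-1})}$ is an isomorphism for $i>m$ and base change along $\s$ preserves the $k$-dimension of the coordinate ring, we have $|\G_i|=|\G_m|$ for all $i\geq m$; thus the size of the growth group is precisely this eventual value $\ell$. If $\sdim(G)>0$ there is nothing to prove: by Corollary \ref{cor: sdim equals dim of growth group} the growth group has dimension $\sdim(G)>0$, so $\ell=\infty$ irrespective of the embedding. I may therefore assume $\sdim(G)=0$, in which case every $\G_i$ is a finite algebraic group and $\ell$ is a positive integer. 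Moreover, since $\sdim(G)=0$, the order $e=\ord(G)$ is a well-defined integer that, by Theorem \ref{theo: existence sdim}, does not depend on the embedding; this guarantees the finiteness of the kernels appearing below.

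Given two $\s$-closed embeddings $G\hookrightarrow\G$ and $G\hookrightarrow\G'$ with limit degrees $\ell$ and $\ell'$, I would pass to the diagonal embedding $G\hookrightarrow\G\times\G'$ and let $\widetilde\ell$ be its limit degree; it suffices to prove $\widetilde\ell=\ell$ (the equality $\widetilde\ell=\ell'$ then follows by symmetry). Working inside $k\{G\}=\bigcup_i k[G[i]]$, the $i$-th Zariski closure of the diagonal has coordinate ring $k[\widetilde{G[i]}]=k[G[i]]\cdot k[G[i]']$. As in the proof of Theorem \ref{theo: existence sdim}, a set of generators of $k[G[0]']$ lies in some $k[G[n]]$, whence $k[G[i]']\subset k[G[n+i]]$ and therefore
$$k[G[i]]\subset k[\widetilde{G[i]}]\subset k[G[n+i]].$$
Dualizing yields, for each $i$, surjections of algebraic groups $G[n+i]\twoheadrightarrow\widetilde{G[i]}\xrightarrow{p_i}G[i]$ whose composite is the projection $G[n+i]\twoheadrightarrow G[i]$, all compatible with the transition maps $\pi_i$.

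The heart of the argument is a size computation. Write $\widetilde\G_i=\ker(\widetilde{G[i]}\to\widetilde{G[i-1]})$ and $K_i=\ker(p_i\colon\widetilde{G[i]}\to G[i])$. For $i\gg 0$ all the groups $G[i]$ and $\widetilde{G[i]}$ have dimension $e$, so, the relevant maps being surjective, the kernels $\G_i$, $\widetilde\G_i$ and $K_i$ are finite. Computing the size of the kernel of the composite $\widetilde{G[i]}\to G[i-1]$ along the two factorizations $\widetilde{G[i]}\to\widetilde{G[i-1]}\to G[i-1]$ and $\widetilde{G[i]}\to G[i]\to G[i-1]$, and using the multiplicativity of size (\ref{eqn: multiplicativity of size}), gives $|\widetilde\G_i|\cdot|K_{i-1}|=|K_i|\cdot|\G_i|$, that is $|\widetilde\G_i|/|\G_i|=|K_i|/|K_{i-1}|$. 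Telescoping this identity over a range $N<i\leq N+t$ of indices with $i\gg 0$, where $|\G_i|=\ell$ and $|\widetilde\G_i|=\widetilde\ell$, gives $(\widetilde\ell/\ell)^{t}=|K_{N+t}|/|K_N|$. Finally, factoring the projection $G[n+i]\twoheadrightarrow G[i]$ through $p_i$ shows that $|K_i|$ divides $|\ker(G[n+i]\to G[i])|=\ell^{n}$, so $1\leq|K_i|\leq\ell^n$ is bounded independently of $i$. Hence $(\widetilde\ell/\ell)^t$ remains in a fixed interval bounded away from both $0$ and $\infty$ for every $t$, which forces $\widetilde\ell=\ell$.

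I expect the main obstacle to be the bookkeeping that makes the size identity $|\widetilde\G_i|/|\G_i|=|K_i|/|K_{i-1}|$ legitimate: one must know that the kernels in question are genuinely finite, so that (\ref{eqn: multiplicativity of size}) applies and the ratios make sense. This is exactly where the embedding-independence of $\ord(G)$ from Theorem \ref{theo: existence sdim}, together with Corollary \ref{cor: finiteness1} guaranteeing the eventual stabilization $|\G_i|=\ell$, enters. Once finiteness and the uniform bound on $|K_i|$ are in place, the telescoping closes the proof.
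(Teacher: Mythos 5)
Your argument is correct, but it is organized differently from the paper's. The paper never passes through the diagonal embedding: it interleaves the two coordinate-ring filtrations directly, obtaining for $j\geq m$ a four-term chain of surjections $G[m+j+i]\twoheadrightarrow G[j+i]'\twoheadrightarrow G[m+i]'\twoheadrightarrow G[i]$, from which the multiplicativity of size gives only the \emph{inequality} $d^{\,m+j}\geq d'^{\,j-m}$; letting $j\to\infty$ yields $d\geq d'$ and then symmetry finishes. It also does not reduce to $\sdim(G)=0$ first, since it computes with the symbol $\infty$ throughout. You instead compare each embedding with the diagonal $G\hookrightarrow\G\times\G'$, which buys you surjections in both directions at \emph{every} level, $G[n+i]\twoheadrightarrow\widetilde{G[i]}\twoheadrightarrow G[i]$, and hence the exact telescoping identity $|\widetilde\G_i|\cdot|K_{i-1}|=|K_i|\cdot|\G_i|$ rather than a one-sided estimate; the uniform bound $1\leq|K_i|\leq\ell^n$ then forces $\widetilde\ell=\ell$. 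Both proofs rest on the same two ingredients --- cofinality of the filtrations and multiplicativity of size --- so neither is more general, but your version isolates the ``error term'' $|K_i|$ explicitly and shows it is bounded, which is arguably more transparent, at the modest cost of the preliminary reduction to $\sdim(G)=0$ (which you justify correctly via Corollary \ref{cor: sdim equals dim of growth group}) and of checking that the two factorizations of $\widetilde{G[i]}\to G[i-1]$ agree (they do, since all maps are dual to inclusions of subalgebras of $k\{G\}$).
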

\begin{proof}
	For $i\geq 0$ let $G[i]$ denote the $i$-th order Zariski closure of $G$ in $\G$ and $\G_i$ the kernel of the projection $\pi_i\colon G[i]\to G[i-1]$. By Proposition \ref{prop: existence of growth group} the integer $d=|\G_i|$ does not depend on $i$ for $i\gg0$. Assume that the finite set $A$ generates $k[G[0]]\subseteq k\{G\}$ as a $k$-algebra. Then $A$ also $\s$-generates $k\{G\}$ as a $k$-$\s$-algebra.
	
	Let $\G'$ be another algebraic group and $G\hookrightarrow\G'$ a $\s$-closed embedding. Let $G[i]'$ denote the $i$-th order Zariski closure of $G$ in $\G'$ and let $d'$ and $A'$ be as above. We have to show that $d=d'$.
	
	Since $A$, as well as $A'$, $\s$-generate $k\{G\}$, there exists an integer $m\geq 1$ such that
	$A'\in k[A,\ldots,\s^m(A)]$ and $A\in k[A',\ldots,\s^m(A')]$. Then, for $i\geq 0$, we have $k[A',\ldots,\s^i(A')]\subseteq k[A,\ldots,\s^{m+i}(A)]$ and $k[A,\ldots,\s^i(A)]\subseteq k[A',\ldots,\s^{m+i}(A')]$. So for $j\geq m$:
	$$k[A,\ldots,\s^i(A)]\subseteq k[A',\ldots,\s^{m+i}(A')]\subseteq k[A',\ldots,\s^{j+i}(A')]\subseteq k[A,\ldots,\s^{m+j+i}(A)].$$
	These inclusions of Hopf algebras correspond to quotient maps of algebraic groups
	$$G[m+j+i]\to G[j+i]'\to G[m+i]'\to G[i].$$
	We have $|\ker(G[m+j+i]\to G[i])|\geq |\ker(G[j+i]'\to G[m+i]')|$ by (\ref{eqn: multiplicativity of size}).
	But by (\ref{eqn: multiplicativity of size}) and Proposition \ref{prop: existence of growth group} we also have $$|\ker(G[m+j+i]\to G[i])|=d^{m+j} \quad \text{ and } \quad |\ker(G[j+i]'\to G[m+i]')|=d'^{j-m}$$ for $i\gg 0$. Consequently, $d^{m+j}\geq d'^{j-m}$. Letting $j$ tend to infinity, we find $d\geq d'$. By symmetry, $d=d'$.
\end{proof}

\begin{defi}
	Let $G$ be a $\s$-algebraic group. Choose an algebraic group $\G$ and a \mbox{$\s$-closed} embedding $G\hookrightarrow\G$. The size of the growth group of $G$ with respect to the $\s$-closed embedding $G\hookrightarrow\G$ is called the \emph{limit degree}\index{limit degree} of $G$ and is denoted by
	$\ld(G).$
	By Proposition \ref{prop: existence of limit degree} the limit degree of $G$ does not depend on the choice of $\G$ and the $\s$-closed embedding $G\hookrightarrow\G$.
\end{defi}
The expression ``limit degree'' is motivated by the fact that
$\ld(G)=\lim_{i\to\infty}\deg(\pi_i),$
where $\deg(\pi_i)$ denotes the degree of the projection $\pi_i\colon G[i]\to G[i-1]$. The naming is also motivated by Lemma \ref{lemma: correspondence of ld} below. 
We note that, by Corollary \ref{cor: sdim equals dim of growth group}, the limit degree of a $\s$-algebraic group is finite if and only if it has $\s$-dimension zero.

\begin{ex}
	Let $0\leq \alpha_1<\cdots<\alpha_n$ and $1\leq\beta_1,\ldots,\beta_n$ be integers and $G\leq\Gm$ the $\s$-closed subgroup of the multiplicative group $\Gm$ given by $$G(R)=\{g\in R^\times|\ \s^{\alpha_1}(g)^{\beta_1}\cdots\s^{\alpha_n}(g)^{\beta_n}=1\}$$
	for every $k$-$\s$-algebra $R$. Then $\ld(G)=\beta_n$ by Example \ref{ex: ssubgroup of Gm}.
\end{ex}

\begin{ex}
	By Example \ref{ex: growth group of linear} the limit degree of a $\s$-closed subgroup of $\Ga$ defined by a linear difference equation is one.
\end{ex}

\begin{ex} \label{ex: ld of algebraic group}
	Let $\G$ be an algebraic group. Then $\ld([\s]_k\G)=|\G|$ by Example \ref{ex: growth group of algebraic group}.
	
\end{ex}

We will next show that our definition of the limit degree corresponds to the classical notion (whenever both notions make sense). To this end, let us recall the definition of the limit degree of an extension $K/k$ of $\s$-fields. Assume that there exists a finite set $A\subseteq K$ such that $a,\s(A),\ldots$ generates $K$ as a field extension of $k$, then the limit degree $\ld(K/k)$ is the limit $\lim_{i\to\infty} d_i$, where $d_i$ is the degree of the field extension $k(A,\ldots,\s^{i}(A))/k(A,\ldots,\s^{i-1}(A)).$ The limit exists and does not depend on the choice of $A$ (\cite[Section 4.3]{Levin:difference}).

\begin{lemma} \label{lemma: correspondence of ld}
	Let $G$ be a $\s$-algebraic group such that $k\{G\}$ is an integral domain and $\s\colon k\{G\}\to k\{G\}$ is injective. Then the limit degree of $G$ equals the limit degree of the field of fractions of $k\{G\}$ over $k$.
\end{lemma}
\begin{proof}
	Let $\G$ be an algebraic group containing $G$ as a $\s$-closed subgroup. For $i\geq 0$ let $G[i]$ denote the $i$-th order Zariski closure of $G$ in $\G$ and let $A\subseteq k[G[0]]$ be a finite set that generates $k[G[0]]\subseteq k\{G\}$ as a $k$-algebra. Let $K$ denote the field of fractions of $k\{G\}$. Then $A,\s(A),\ldots$ generates $K$ as a field extension of $k$ and $k(A,\s(A),\ldots,\s^i(A))$ is the field of fractions of $k[G[i]]$. Therefore the degree of the field extension $$k(A,\ldots,\s^{i}(A))/k(A,\ldots,\s^{i-1}(A))$$ equals the degree of the projection $G[i]\to G[i-1]$.
\end{proof}

%


It is well-known that a finitely $\s$-generated extension of $\s$-fields has limit degree one if and only if the extension is finitely generated as a field extension (\cite[Prop. 4.3.12]{Levin:difference}). A similar statement is true for difference Hopf algebras:

\begin{lemma} \label{lemma: ld equal 1}
	Let $G$ be a $\s$-algebraic group. Then $\ld(G)=1$ if and only if $k\{G\}$ is finitely generated as a $k$-algebra.
\end{lemma}
\begin{proof}
	Fix a $\s$-closed embedding $G\hookrightarrow\G$ and let $G[i]$ denote the $i$-th order Zariski closure of $G$ in $\G$. So $k\{G\}=\cup_{i\geq 0}k[G[i]]$.
	If $\ld(G)=1$, there exists an integer $m\geq 0$ such that the quotients maps $\pi_i\colon G[i]\to G[i-1]$ have trivial kernel for $i> m$. So $\pi_i$ is an isomorphism and therefore $k[G[i]]=k[G[i-1]]$. Consequently, $k\{G\}=k[G[m]]$ is finitely generated as a $k$-algebra.
	
	Conversely, if $k\{G\}$ is a finitely generated $k$-algebra, we can consider the algebraic group $\G$ associated with the Hopf algebra $k\{G\}$, i.e., $k[\G]=k\{G\}$ as $k$-algebras.
	By Lemma \ref{lemma: Hopf adjunction} the morphism $k\{\G\}\to k\{G\}$ induced from the identity map $k[\G]\to k\{G\}$ is a morphism of \ks-Hopf algebras.	
	 With respect to the corresponding $\s$-closed embedding $G\hookrightarrow\G$ we have $k\{G\}=k[G[0]]=k[G[1]]=\ldots$ and therefore the associated growth group is trivial. Thus $\ld(G)=1$.
\end{proof}

\begin{ex}
	It is clear from Lemma \ref{lemma: ld equal 1} that the $\s$-algebraic groups in Examples~\ref{ex: general constr}, \ref{ex: Frobenius algebraic group} and \ref{ex: split finite} have limit degree one.
\end{ex}

We conclude this section by explaining the connection between the category of algebraic $\s$-groups introduced in \cite{KowalskiPillay:OnAlgebraicSigmaGroups} and affine difference algebraic groups of limit degree one.
Let us begin by recalling the definition of algebraic $\s$-groups from \cite{KowalskiPillay:OnAlgebraicSigmaGroups}. Let $k$ be a $\s$-field. 
An \emph{algebraic $\s$-variety\footnote{The assumptions in \cite{KowalskiPillay:OnAlgebraicSigmaGroups} on $k$ and $\X$ are slightly more restrictive but this is irrelevant for our discussion here.}} over $k$ is a scheme $\X$ of finite type over $k$ together with a morphism $\wtilde{\s}\colon\X\to{\hs\X}$ of schemes over $k$. Here, as in Section \ref{subsec: Zariski closures}, $\hs \X$ denotes the scheme over $k$ obtained from $\X$ by base change via $\s\colon k\to k$. 
 A morphism between algebraic $\s$-varieties is a morphism $\f\colon\X\to\Y$ of schemes over $k$ such that
$$
\xymatrix{
	\X \ar^-{\wtilde{\s}}[r] \ar_\f[d] & \hs \X \ar^-{\hs\f}[d]\\
	\Y \ar^-{\wtilde{\s}}[r] & \hs \Y
}
$$
commutes. An \emph{algebraic $\s$-group} is a group object in the category of algebraic $\s$-varieties. Equivalently, an algebraic $\s$-group is a (not necessarily affine) algebraic group $\G$ over $k$ together with a morphism $\wtilde{\s}\colon\G\to{\hs\G}$ of algebraic groups.
The category of \emph{affine algebraic $\s$-groups} consists of the algebraic $\s$-groups whose underlying scheme is affine; the morphisms are morphism of algebraic $\s$-varieties that are also morphisms of algebraic groups.

\begin{prop} \label{prop: algebraic sgroups}
	The category of affine algebraic $\s$-groups is equivalent to the category of \mbox{$\s$-algebraic} groups of limit degree one.
\end{prop}
\begin{proof}
	Let $R$ be a $k$-algebra. To define a $k$-$\s$-algebra structure on $R$ is equivalent to defining a morphism of $k$-algebras $\overline{\s}\colon {\hs R}\to R$: Given $\s\colon R\to R$, we can define $$\overline{\s}\colon {\hs R}\to R,\ r\otimes\lambda\mapsto \s(r)\lambda.$$ Conversely, given $\overline{\s}\colon {\hs R}\to R$, we can define $\s\colon R\xrightarrow{\id\otimes 1}R\otimes_k k={\hs R}\xrightarrow{\overline{\s}} R$.
	Moreover, if $R$ and $S$ are $k$-$\s$-algebras and $\psi\colon R\to S$ is a morphism of $k$-algebras, then $\psi$ is a morphism of $k$-$\s$-algebras if and only if
	$$
	\xymatrix{
		\hs R \ar^-{\overline{\s}}[r] \ar_-{\hs\psi}[d] & R \ar^\psi[d]\\
		\hs S \ar^-{\overline{\s}}[r] & S
	}
	$$
	commutes.
	
	Dualizing the definition, an affine algebraic $\s$-group $\G$ corresponds to a finitely generated Hopf algebra $k[\G]$ together with a morphism of Hopf algebras $\overline{\s}=({\wtilde{\s}})^*\colon{\hs (k[\G])}\to k[\G]$. By the remark from the beginning of the proof, the statement that $\overline{\s}$ is a morphism of Hopf algebras corresponds to the statement that $k[\G]$ is a $k$-$\s$-Hopf algebra. Thus the category of affine algebraic $\s$-groups is anti-equivalent to the category of $k$-$\s$-Hopf algebras that are finitely generated as $k$-algebras. Now the claim follows from Remark \ref{rem: equivalence sgroups sHopfalgebras} and Lemma~\ref{lemma: ld equal 1}.
\end{proof}

\section{Connected components of difference algebraic groups} \label{section: components}

In this section we study the connected components of $\spec(k\{G\})$ for a difference algebraic group $G$. Our main result is that there are only finitely many invariant connected components. This result can be reinterpreted as a positive answer (in a special case) to a question raised by E. Hrushovski.

\subsection{Two equivalent conjectures}
%

In \cite[Section 4.6]{Hrushovski:elementarytheoryoffrobenius} E. Hrushovski raised the question whether or not it is possible to strengthen the classical Ritt-Raudenbusch basis theorem (\cite[Theorem 2.5.11]{Levin:difference}). For clarity, let us state the question as a conjecture:

\begin{conj} \label{conj: Hrushovski1}
	Let $k$ be a $\s$-field and $R$ a finitely $\s$-generated $k$-$\s$-algebra. Then every ascending chain of radical, well-mixed $\s$-ideals in $R$ is finite.
\end{conj}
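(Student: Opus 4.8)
The plan is to attempt the classical ``maximal counterexample'' strategy used to prove the Ritt--Raudenbush basis theorem, staying alert to the single place where that argument uses genuinely more than the radical and mixed hypotheses. First I would record the routine reformulations. The asserted ascending chain condition is equivalent to the assertion that every radical mixed $\s$-ideal of $R$ is the radical mixed closure $\langle B\rangle$ of some finite set $B$ (the smallest radical mixed $\s$-ideal containing $B$), and a union of a chain of radical mixed $\s$-ideals is again radical and mixed. Hence, if the conclusion failed, the family of radical mixed $\s$-ideals of $R$ that are \emph{not} of the form $\langle B\rangle$ for finite $B$ would be nonempty, and by Zorn's lemma it would contain a maximal element $M$.

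The crux is to show that such an $M$ is prime. The intended argument is the usual one: if $ab\in M$ with $a\notin M$ and $b\notin M$, then $\langle M,a\rangle$ and $\langle M,b\rangle$ strictly contain $M$, so by maximality each is the radical mixed closure of a finite set, and I would try to assemble a finite basis of $M$ from these together with the relation $ab\in M$, contradicting the choice of $M$. This assembly rests on an intersection identity of the shape $\langle M,a\rangle\cap\langle M,b\rangle=M$ (equivalently $\langle ab\rangle=\langle a\rangle\cap\langle b\rangle$ after passing to $R/M$), which is exactly where the classical proof divides by $\s$.

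Granting primality, the finish is clean in one case. I would record that every prime $\s$-ideal is automatically radical and mixed, and that a prime $\s$-ideal is perfect precisely when it is reflexive, that is $\s$-prime in the sense $\s^{-1}(M)=M$. For such a $\s$-prime $M$ one passes to the $\s$-domain $R/M$ and its $\s$-field of fractions and invokes the classical theory of finitely $\s$-generated $\s$-field extensions (the Ritt--Raudenbush theorem for perfect $\s$-ideals), concluding that $M$ is finitely generated as a perfect, hence as a radical mixed, $\s$-ideal. Thus the only primes left to treat would be the non-reflexive ones.

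The main obstacle is precisely the failure of reflexivity, and I do not expect it to yield to a formal manipulation --- which is why the statement is recorded here only as a conjecture. A perfect $\s$-ideal is the same thing as a radical mixed \emph{reflexive} $\s$-ideal, so dropping reflexivity enlarges the class strictly: already for a single $\s$-variable, $[\s(y)]\subset k\{y\}$ is prime, hence radical and mixed, yet $\s(y)\in[\s(y)]$ while $y\notin[\s(y)]$, so it is neither reflexive nor perfect, and the Ritt--Raudenbush theorem simply does not govern the chains in question. Both steps above where I invoked perfectness --- the intersection identity forcing $M$ prime, and the handling of $M$ once it is prime --- are the two points at which Cohn's shuffling argument uses $\s^{n}(c)\in I\Rightarrow c\in I$. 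One tempting fix is to replace each $I$ by its reflexive closure $I^{*}=\{a\in R \mid \s^{n}(a)\in I\text{ for some }n\geq 0\}$, which is radical, mixed and reflexive, hence perfect; an ascending chain then yields $I_{1}^{*}\subset I_{2}^{*}\subset\cdots$, which stabilizes by Ritt--Raudenbush. But $I\mapsto I^{*}$ is far from injective, so stabilization upstairs does not descend to the original chain. The one setting where the missing reflexivity can be compensated is when $R$ carries a Hopf structure, where the comultiplication and antipode supply enough rigidity; this is why the companion finiteness result on finitely many minimal prime $\s$-ideals is provable while the general conjecture is expected to be much harder.
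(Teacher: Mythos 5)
You have correctly diagnosed the situation: the statement is labelled a conjecture in the paper and is nowhere proved there in the stated generality, so there is no ``paper's own proof'' to match your attempt against. Your account of why the classical maximal--counterexample strategy stalls is essentially accurate. The preliminary reductions (ACC equivalent to finite generation of every radical mixed $\s$-ideal as such, existence of a maximal counterexample $M$ by Zorn) are fine, and the precise point of failure is the one you name: Cohn's shuffling lemma gives $a\s(b)\in\{ab\}$ for free from mixedness, but the companion containment $\s(a)b\in\{ab\}$ needs reflexivity, and without it neither the intersection identity nor the finite reassembly of a basis for $M$ goes through. Your observation that passing to reflexive closures $I\mapsto I^{*}$ destroys injectivity and therefore does not transport the stabilization back down is also the right reason that the obvious repair fails.

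Two points of comparison with what the paper actually establishes. First, the paper does not prove even the Hopf--algebra case of Conjecture \ref{conj: Hrushovski1} (the chain condition); Theorem \ref{theo: Hrushovskiconj} is the Hopf--algebra case of the \emph{equivalent} Conjecture \ref{conj: Hrushovski2} on minimal prime $\s$-ideals, and the equivalence of the two conjectures is only cited from Hrushovski, not proved. Second, your closing intuition --- that a Hopf structure compensates for the missing reflexivity --- is right in spirit but the mechanism in the paper is more specific than ``rigidity from the comultiplication.'' The proof of Theorem \ref{theo: Hrushovskiconj} shows that for a finitely $\s$-generated $k$-$\s$-Hopf algebra $k\{G\}$ every minimal prime $\s$-ideal is already a minimal prime ideal (Corollary \ref{cor: minsprime is minimal}, via the component analysis of Lemmas \ref{lemma:connectedcomponentsareirreducible} and \ref{lemma: scomponent}), then transports the question to the group of components $\pi_0(G)$, whose coordinate ring is $\s$-\'{e}tale (Theorem \ref{theo: pi0}, Lemma \ref{lemma: correspondence components}). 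In a $\s$-\'{e}tale algebra every prime ideal is maximal, so every prime $\s$-ideal satisfies $\p\subset\s^{-1}(\p)$ with both sides maximal and is therefore automatically reflexive, i.e.\ $\s$-prime; at that point the classical Ritt--Raudenbush theorem applies legitimately (Lemma \ref{lemma: setalimpliesfinite}, Theorem \ref{theo: finiteness of scomponents}). So the Hopf hypothesis is used to reduce to a subquotient where reflexivity holds for structural reasons, exactly filling the gap you isolated --- but only for the minimal--primes statement, not for the full ascending chain condition, which remains open.
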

Recall that a $\s$-ideal $\ida$ is \emph{well-mixed}, if $ab\in\ida$ implies $a\s(b)\in\ida$. E. Hrushovski proved Conjecture \ref{conj: Hrushovski1} under certain additional assumptions on $R$ (\cite[Lemma 4.35]{Hrushovski:elementarytheoryoffrobenius}). Moreover, J. Wang (\cite{Wang:FiniteBasisForRadicalWellmixedDifferenceIdeals} \cite{Wang:MonomialDifferenceIdeals}) proved the conjecture (appropriately reformulated) for monomial and binomial difference ideals.
In \cite{Levin:OnTheAscendingChainCondition} A. Levin showed that the conjecture fails if the assumption that the $\s$-ideals are radical is dropped.

A prime $\s$-ideal $\p$ of a $\s$-ring $R$ is a $\s$-ideal that is a prime ideal. It is a \emph{minimal prime $\s$-ideal} if for every prime $\s$-ideal $\q$ of $R$ with $\q\subseteq \p$ we have $\q=\p$. We will show below that Conjecture \ref{conj: Hrushovski1} is equivalent to the following conjecture.

\begin{conj} \label{conj: Hrushovski2}
	Let $k$ be a $\s$-field and $R$ a finitely $\s$-generated $k$-$\s$-algebra. Then the set of minimal prime $\s$-ideals of $R$ is finite.
\end{conj}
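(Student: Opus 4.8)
The plan is to attack Conjecture \ref{conj: Hrushovski2} through its equivalence (via \cite[Lemma 4.34]{Hrushovski:elementarytheoryoffrobenius}) with Conjecture \ref{conj: Hrushovski1}, i.e.\ to establish the ascending chain condition for radical mixed $\s$-ideals, and to build everything on the one genuinely available Noetherianity statement in this setting, the Ritt--Raudenbush basis theorem (\cite[Theorem 2.5.11]{Levin:difference}). Since every prime $\s$-ideal is radical, one may first replace $R$ by $R/\mathfrak{n}$, where $\mathfrak{n}$ is the nilradical (a $\s$-ideal, as $\s$ preserves nilpotents); this does not change the minimal prime $\s$-ideals. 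Writing $R=k\{y_1,\ldots,y_n\}/\ida$, the minimal prime $\s$-ideals of $R$ correspond to the minimal prime $\s$-ideals of $k\{y\}$ containing $\ida$, so it suffices to bound the latter inside the $\s$-polynomial ring, where the Ritt--Raudenbush machinery and Dickson-type combinatorics are available.

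The reflexive layer is controlled by Ritt--Raudenbush directly. Let $\mathfrak{r}$ be the perfect closure of $\ida$. Being a perfect $\s$-ideal, $\mathfrak{r}=\q_1\cap\cdots\cap\q_s$ is a finite intersection of its finitely many minimal $\s$-prime (that is, reflexive prime $\s$-) ideals. Now every prime $\s$-ideal $\p\supseteq\ida$ has a reflexive hull $\p^*=\bigcup_{j\ge 0}\s^{-j}(\p)$: the ideals $\s^{-j}(\p)$ form an increasing chain of primes, their union is a reflexive prime $\s$-ideal, and it contains $\mathfrak{r}$, hence contains some $\q_i$. This sorts the minimal prime $\s$-ideals over $\ida$ into finitely many families according to their reflexive hull and reduces the global problem to a local one: fixing a reflexive $\s$-prime $\q$, bound the number of minimal prime $\s$-ideals $\p$ with $\p^*=\q$. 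Since $\q$ is reflexive, $\s$ maps $R\setminus\q$ into itself, so $\s$ descends to the localization $R_\q$, and one may carry out this count in $R_\q$ or after a faithfully flat base change trivializing $\q$.

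The main obstacle is exactly this non-reflexive layer, and it is the reason the statement remains a conjecture in general. A minimal prime $\s$-ideal need not be reflexive: already for $R=k\{y\}/[y\s(y)]$ the unique minimal prime $\s$-ideal is $\p=(\s(y),\s^2(y),\ldots)$, which is not reflexive because $y\in\s^{-1}(\p)\setminus\p$. One is therefore forced to understand prime $\s$-ideals strictly below a reflexive one, a regime in which Ritt--Raudenbush gives no information and in which $k\{y\}$ is emphatically non--Noetherian. The monomial example is nevertheless encouraging: there $\s$-stability forces the exponent support of a prime $\s$-ideal to be an ``up--set'', and Dickson's lemma then yields finiteness. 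Accordingly I would try to isolate a general Dickson / well--quasi--ordering principle for the supports of the minimal prime $\s$-ideals lying below a fixed reflexive $\q$, exploiting the fact that $\s$-stability imposes a one--sided shift constraint on these supports.

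Finally, I would use the already-established Hopf-algebra case (Theorem \ref{theo: Hrushovskiconj}) both as a consistency check and as a source of technique, while being clear about why it does not extend for free. In that case the non-reflexive layer is tamed by the group structure: the growth-group stabilization of Proposition \ref{prop: existence of growth group} forces the tower of Zariski closures, and with it the component structure, to become eventually constant. No such stabilization is available for a bare $\s$-variety---this is the same failure that makes the dimension theorem break down for difference varieties while holding for $\s$-algebraic groups---so the decisive step is to manufacture a substitute for the growth group that operates without any group or Noetherian crutch. I expect this to be the crux on which a complete proof of the general conjecture stands or falls.
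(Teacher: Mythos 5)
The statement you were handed is stated in the paper as a conjecture: the paper does not prove it in general, and only establishes the special case in which $R$ carries a $k$-$\s$-Hopf algebra structure (Theorem \ref{theo: Hrushovskiconj}). Your proposal, to its credit, does not pretend otherwise, and the reductions you actually carry out are sound: killing the nilradical is harmless; the reflexive hull $\p^*=\bigcup_{j\geq 0}\s^{-j}(\p)$ of a prime $\s$-ideal is a reflexive, hence perfect, prime $\s$-ideal, so it contains one of the finitely many minimal $\s$-primes $\q_1,\ldots,\q_s$ of the perfect closure furnished by Ritt--Raudenbush; and $[y\s(y)]$ is a correct example of a non-reflexive minimal prime $\s$-ideal. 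But two gaps remain. The smaller one: your ``sorting into finitely many families according to their reflexive hull'' is not justified, because $\p^*$ only \emph{contains} some $\q_i$ and need not equal it (indeed, if $\p$ is non-reflexive and minimal, no $\q_i$ can be contained in $\p$ itself), so a priori infinitely many distinct hulls could occur over a single $\q_i$, and the finiteness of the set of hulls is itself part of what must be proved. The decisive one: the finiteness of the non-reflexive layer --- minimal prime $\s$-ideals lying strictly below a reflexive prime --- is exactly the open content of the conjecture, and the hoped-for Dickson/well-quasi-ordering principle for their ``supports'' is not constructed; nothing is said about why it should apply to non-monomial ideals. So, as a proof, the proposal has a genuine gap, which you have correctly located rather than papered over.

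For what it is worth, the paper's proof of the Hopf-algebra case does not analyze the non-reflexive layer at all; it eliminates it. The Hopf structure forces the connected components of $\spec(k\{G\})$ to be irreducible and cut out by idempotents (Lemma \ref{lemma:connectedcomponentsareirreducible}), whence every minimal prime $\s$-ideal is already a minimal prime ideal (Corollary \ref{cor: minsprime is minimal}); the $\s$-stable components are then counted by passing to the $\s$-\'{e}tale quotient $\pi_0(G)$, where every prime $\s$-ideal is automatically reflexive and Ritt--Raudenbush applies (Lemma \ref{lemma: setalimpliesfinite}, Theorem \ref{theo: finiteness of scomponents}). Your closing heuristic about the growth group is reasonable, but the mechanism the paper actually exploits is this idempotent/component rigidity, not Proposition \ref{prop: existence of growth group}.
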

The main result of this section is a special case of Conjecture \ref{conj: Hrushovski2}:
\begin{theo} \label{theo: Hrushovskiconj}
	Conjecture \ref{conj: Hrushovski2} holds under the additional assumption that $R$ can be equipped with the structure of a $k$-$\s$-Hopf algebra.
\end{theo}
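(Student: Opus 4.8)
The plan is to control the minimal prime $\s$-ideals of $R=k\{G\}$ through the connected components of the Zariski closures $G[i]$ of $G$ in an ambient algebraic group. First I would use Theorem~\ref{theo: linearization} to fix a $\s$-closed embedding $G\hookrightarrow\G$ into an algebraic group, so that $k\{G\}=\bigcup_{i\ge 0}k[G[i]]$ is a directed union of the finitely generated Hopf algebras $k[G[i]]$. To any prime $\s$-ideal $\p$ one associates the family $\p[i]=\p\cap k[G[i]]$; since every element of $k\{G\}$ lies in some $k[G[i]]$ one has $\p=\bigcup_i\p[i]$, with $\p[i-1]=\p[i]\cap k[G[i-1]]$ and $\s(\p[i-1])\subseteq\p[i]$, and the chain $\p[0]\subseteq\p[1]\subseteq\cdots$ is increasing. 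Thus a prime $\s$-ideal is the same thing as a compatible, $\s$-stable chain of primes $\p[i]\subset k[G[i]]$, and $\p$ is recovered from its tail $(\p[i])_{i\ge N}$ for any fixed $N$.

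The key geometric input is that, for a \emph{minimal} prime $\s$-ideal, the ideals $\p[i]$ are eventually minimal primes of $k[G[i]]$, i.e.\ $\V(\p[i])$ is an irreducible (equivalently, connected) component of the algebraic group $G[i]$. Hence for $i\gg 0$ the datum of $\p[i]$ is the datum of a component $c_i\in\pi_0(G[i])$, and $\s$-stability together with the maps of (\ref{eqn: comm diag pi si}) forces the compatibilities $\pi_i(c_i)=c_{i-1}$ and $\s_i(c_i)={\hs(c_{i-1})}$. The heart of the proof is then to show that these two compatibilities pin $c_i$ down: for $i$ larger than the stabilization index $m$ of Proposition~\ref{prop: existence of growth group}, there should be a \emph{unique} component $c_i$ lying over the pair $(c_{i-1},{\hs(c_{i-1})})$. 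Indeed, by Corollary~\ref{cor: finiteness1} one has, for $i>m$, the fibre-product description $G[i]=(G[i-1]\times{}^{\s^i}\!\G)\cap(\G\times{\hs(G[i-1])})$, which exhibits $G[i]$ as the fibre product of $G[i-1]$ and ${\hs(G[i-1])}$ over ${\hs(G[i-2])}$. One must check that $\pi_0$ of this fibre product maps bijectively onto the fibre product of the $\pi_0$'s; equivalently, that the relevant fibre product of identity components is connected. This is exactly where the isomorphisms $\s_i\colon\G_i\xrightarrow{\sim}{\hs(\G_{i-1})}$ of Proposition~\ref{prop: existence of growth group} enter: beyond level $m$ the kernel $\G_i$ contributes no genuinely new component data, only a $\s$-shift of the previous level, and it is this absence of new data that should yield the required uniqueness.

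Granting the uniqueness, $c_i$ is determined by $c_{i-1}$ for all $i>N$ (with $N\ge m$ chosen so that both the minimality of the $\p[i]$ and the stabilization hold), so the entire tail $(c_i)_{i\ge N}$ is determined by the single component $c_N\in\pi_0(G[N])$. Since $G[N]$ is an algebraic group, $\pi_0(G[N])$ is finite, and since the assignment $\p\mapsto(c_i)_{i\ge N}$ is injective (the tail recovers each $\p[i]$, hence $\p=\bigcup_{i\ge N}\p[i]$), there are at most $|\pi_0(G[N])|$ minimal prime $\s$-ideals. The minimal prime $\s$-ideal contained in the augmentation ideal $\m=\ker(\varepsilon)$ corresponds to $c_i=G[i]^\circ$ and cuts out the connected component $G^\circ$, the other minimal prime $\s$-ideals being its ``$\s$-translates''. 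I expect the main obstacle to be precisely the uniqueness assertion of the middle paragraph, namely the component-theoretic transversality of the fibre product and the bookkeeping of the base-field twist by $\s$ (which I would neutralize by first passing to the inversive closure $k^*$ of $k$, exactly as in the proof of Proposition~\ref{prop: existence of growth group}), together with the verification that $\p[i]$ is genuinely a minimal prime of $k[G[i]]$ for $i\gg 0$.
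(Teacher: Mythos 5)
Your route is genuinely different from the paper's (which proves that minimal prime $\s$-ideals are minimal prime ideals, Corollary \ref{cor: minsprime is minimal}, passes to the $\s$-\'{e}tale group of components $\pi_0(G)$ --- finitely $\s$-generated thanks to Theorem \ref{theo: finiteness2} --- and then applies the Ritt--Raudenbush basis theorem via Lemma \ref{lemma: setalimpliesfinite}), but the step you correctly single out as the heart of your argument fails with the justification you propose. Concretely, let $\operatorname{char}(k)\neq 2$ and let $G\leq\Gm^2$ be defined by $\s(u)=v^2$ and $u=\s(v)^2$. Then $k\{G\}\cong k[v_0^{\pm 1},v_1^{\pm 1},v_2^{\pm 1},\ldots]/(v_{j+2}^2-v_j^2\,:\,j\geq 0)$ with $u=v_1^2$; one checks that $G[0]\cong G[1]\cong\Gm^2$ are connected, that $\G_i\cong\mu_2$ with $\s_i\colon\G_i\to{\hs(\G_{i-1})}$ an isomorphism for all $i\geq 2$ (so $m=1$), and that the fibre-product description of Corollary \ref{cor: finiteness1} holds for all $i\geq 2$. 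Nevertheless $k[G[2]]\cong k[v_0^{\pm1},v_1^{\pm1},v_2^{\pm1}]/(v_2^2-v_0^2)$ has two components, and both of them surject onto the unique component of $G[1]$ under $\pi_2$ and onto the unique component of ${\hs(G[1])}$ under $\s_2$; moreover both branches really do continue to minimal prime $\s$-ideals of $k\{G\}$ (the two $\s$-stable sign patterns $v_{j+2}=\epsilon v_j$ for all $j$, with $\epsilon=\pm 1$). So at level $i=m+1$ there are two components over the pair $(c_{i-1},{\hs(c_{i-1})})$ even though every ingredient you invoke is already in force there. The underlying point is that $\pi_0$ of a fibre product of connected algebraic groups over a connected algebraic group need not be trivial (already $\Gm\times_{\Gm}\Gm$ for two squaring maps is disconnected), and the triviality of $\ker(\pi_i,\s_i)=\G_i\cap\ker(\s_i)$ gives a closed embedding of $G[i]$ into $G[i-1]\times{\hs(G[i-1])}$ but not injectivity on component groups.

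In this example uniqueness does hold from level $3$ on, and a statement of the form ``$c_i$ is determined by $c_{i-1}$ for all $i\gg 0$'' may well be true in general; but it is not a consequence of Proposition \ref{prop: existence of growth group} and Corollary \ref{cor: finiteness1}, and proving it would require a genuinely new finiteness argument --- in effect a reformulation of the theorem you are trying to prove. (Passing to the inversive closure does not help: the example above lives over $k=\mathbb{C}$ with $\s=\operatorname{id}$.) Your other pending step, that $\p[i]=\p\cap k[G[i]]$ is eventually a minimal prime of $k[G[i]]$ when $\p$ is a minimal prime $\s$-ideal, is also not mere bookkeeping: it amounts to the statement that a minimal prime $\s$-ideal is a minimal prime ideal, which is exactly Corollary \ref{cor: minsprime is minimal} and rests on Lemma \ref{lemma: scomponent} (via reflexive closures); once that is granted, going-down for the faithfully flat inclusions $k[G[i]]\hookrightarrow k\{G\}$ does the rest. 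As it stands, the proposal identifies the right combinatorial picture but leaves its two essential claims unproved, and the proposed mechanism for the central one is refuted by the example above.
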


The proof of Theorem \ref{theo: Hrushovskiconj} is given at the end of this section.
We will first show that the two conjectures are equivalent. To this end
we need a basic topological lemma. Recall that a topological space $X$ is \emph{Noetherian} if every descending chain of closed subsets of $X$ is finite.

\begin{lemma} \label{lemma: topological}
	A topological space is Noetherian if and only if
	\begin{enumerate}
		\item every descending chain of irreducible closed subsets is finite and
		\item every closed subset is a finite union of irreducible closed subsets.
	\end{enumerate}
\end{lemma}
Assume that $X$ is a Noetherian topological space. Then clearly (i) is satisfied and it is also well-known that (ii) holds for $X$ (\cite[\href{https://stacks.math.columbia.edu/tag/0050}{Lemma 5.9.2, Tag 0050}]{stacks-project}).

The proof of the reverse direction is a variation of the proof of K\"{o}nig's Lemma: Assume that $X$ is a topological space satisfying (i) and (ii). Let 
\begin{equation} \label{eq: descending sequence}
X_1\supseteq X_2\supseteq \ldots
\end{equation} be a descending chain of closed subsets of $X$. Using (ii), for every $i\geq 1$, we may write $X_i=Y_{i,1}\cup\ldots\cup Y_{i,n_i}$ as an irredundant union of irreducible closed subsets $Y_{i,j}$ of $X$. 
We define a directed graph as follows: The set of vertices is $\{(i,j)\ | \ 1\leq i \text{ and } 1\leq j\leq n_i\}$ and there is an arrow from $(i,j)$ to $(i',j')$ if $i'=i-1$ and $Y_{i,j}\subseteq Y_{i',j'}$.

Let $i\geq 2$ and $1\leq j\leq n_i$. Since $X_{i}\subseteq X_{i-1}$ and $Y_{i,j}$ is irreducible, there exists a $j'$ with $1\leq j'\leq n_{i-1}$ and $Y_{i,j}\subseteq Y_{i-1,j'}$. So we see that for every vertex on the $i$-level (i.e., for ever vertex of the form $(i,j)$) there exists an arrow to a vertex on the $i-1$ level. In particular, for every vertex there exists a path to the $1$-level.

If $S_1=\{Y_{i,j}\ | \ 1\leq i \text{ and } 1\leq j\leq n_i\}$ is finite, then also (\ref{eq: descending sequence}) is finite. So we suppose that $S_1$ is infinite. Since $S_1$ is infinite, there must exists a $j_1$ with $1\leq j_1\leq n_1$ such that $$S_2=\left\{Y_{i,j} \ | \ i\geq 2,\ 1\leq j\leq n_i, \text{ there exists a path from $(i,j)$ to $(1,j_1)$}\right\}$$ is infinite. Set $Z_1=Y_{1,j_1}$. Since $S_2$ is infinite, there exists a $j_2$ with $1\leq j_2\leq n_2$ such that $Y_{2,j_2}\in S_2$ and 
$$S_3=\{Y_{i,j} \ |\ i\geq 3,\ 1\leq j\leq n_i, \text{ there exists a path from $(i,j)$ to $(2,j_2)$}\}$$
is infinite. Set $Z_2=Y_{2,j_2}$. Then $Z_1\supseteq Z_2$. Continuing like this, we obtain a descending chain $Z_1\supseteq Z_2\supseteq Z_3\supseteq\ldots$ of irreducible closed subsets of $X$ and a descending chain $S_1\supseteq S_2\supseteq S_3\supseteq\ldots$ of infinite sets.

We claim that $Z_i=Z_{i+1}$ implies $S_i=S_{i+1}$. But if $Z_i=Z_{i+1}$, then there is exactly one arrow from level $i+1$ to level $i$ that points to $(i,j_i)$ (because the representations $X_i=Y_{i,1}\cup\ldots\cup Y_{i,n_i}$ are irredundant). So $S_i=S_{i+1}$.

By (i) we have $Z_i=Z_{i+1}$ for $i\gg 1$. Therefore $S_i=S_{i+1}$ for $i\gg 1$. But then $S_i$ must consist of a single element for $i\gg 1$. This contradicts the fact that $S_i$ is infinite.

\begin{prop} \label{prop: conjectures equivalent}
	Let $R$ be a finitely $\s$-generated \ks-algebra. Then the following two statements are equivalent:
	\begin{enumerate}
		\item Every ascending chain of radical well-mixed $\s$-ideals in $R$ is finite.
		\item Every radical, well-mixed $\s$-ideal of $R$ is the finite intersection of prime $\s$-ideals.
	\end{enumerate}
\end{prop}
\begin{proof}
	We consider the set $X$ of all prime $\s$-ideals of $R$ as a topological space with respect to the topology induced from the Zariski topology on $\spec(R)$. Since every radical well-mixed $\s$-ideal $\ida$ of $R$ is the intersection of the prime $\s$-ideals of $R$ that contain $\ida$ (\cite[Lemma 2.10]{Hrushovski:elementarytheoryoffrobenius} or \cite[Prop. 1.2.28]{Wibmer:LectureNotes}), it follows that the map $\ida\mapsto V(\ida)=\{\p \ |\ \ida\subseteq \p\}$ from the set of radical well-mixed $\s$-ideals of $R$ to the set of closed subsets of $X$ is bijective. Moreover, for a radical, well-mixed $\s$-ideal $\ida$, the closed subset $V(\ida)$ is irreducible if and only if $\ida$ is prime.

	An ascending chain of radical well-mixed $\s$-ideals in $R$ corresponds to a descending chain of closed subset of $X$. Thus, condition (i) translates to $X$ being a Noetherian topological space. On the other hand, condition (ii) translates to: Every closed subset of $X$ is a finite union of irreducible closed subsets. 
	
	It is known that every ascending chain of prime $\s$-ideals of $R$ is finite (\cite[Lemma 4.34]{Hrushovski:elementarytheoryoffrobenius} or \cite[Cor. 5.1.6]{Wibmer:LectureNotes}). In other words, every descending chain of irreducible closed subsets of $X$ is finite. Thus the claim follows from Lemma \ref{lemma: topological}.
	\end{proof}
From Proposition \ref{prop: conjectures equivalent} we immediately obtain: 
\begin{cor}
	Conjectures \ref{conj: Hrushovski1} and \ref{conj: Hrushovski2} are equivalent.
\end{cor}

\subsection{Connected components}

For the proof of Theorem \ref{theo: Hrushovskiconj} we need to introduce the group of connected components $\pi_0(G)$ of a $\s$-algebraic group $G$. We do not aim for an exhaustive study of $\pi_0(G)$ and the closely related notion of the identity component $G^o$ here. (The interested reader is referred to \cite[Section 4]{Wibmer:Habil}.) We rather work towards a proof of Theorem \ref{theo: Hrushovskiconj} as directly as possible.

An \emph{irreducible (or connected) component} of a topological space is a maximal irreducible (or connected) subset. An irreducible (or connected) component is closed. Every topological space is the disjoint union of its connected components.

A \emph{connected (or irreducible) component} of a $\s$-algebraic group $G$ is a connected (or irreducible) component of $\spec(k\{G\})$. If $R$ is a ring and $\ida\subseteq R$, we denote by $\VV(\ida)$ the closed subset of $\spec(R)$ defined by $\ida$. The following lemma is really about affine group scheme, rather than difference algebraic groups. But for lack of a suitable reference we include the proof. 

\begin{lemma} \label{lemma:connectedcomponentsareirreducible}
	Let $G$ be a $\s$-algebraic group. The connected components and the irreducible components of $G$ coincide. Moreover, if $\p$ is a prime ideal of $k\{G\}$, then the connected component of $G$ containing $\p$ equals $\VV(\ida)$, where $\ida$ is the ideal of $k\{G\}$ generated by all idempotent elements of $k\{G\}$ contained in $\p$.
\end{lemma}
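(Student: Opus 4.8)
The plan is to reduce the whole statement to the single assertion that $\VV(\ida)$ is \emph{irreducible}, and then to prove that via the filtration $k\{G\}=\bigcup_{i\geq 0}k[G[i]]$ by coordinate rings of algebraic groups. First I would isolate the purely ring-theoretic part, which needs no group structure. The clopen subsets of $\spec(k\{G\})$ are exactly the sets $\{\q\mid e\in\q\}$ with $e$ an idempotent of $k\{G\}$. From this one checks directly that the quasi-component of $\p$ (the intersection of all clopen sets containing $\p$) is precisely $\VV(\ida)$: if $e\in\p$ is idempotent then $\{\q\mid e\in\q\}$ is a clopen neighbourhood of $\p$, which gives one inclusion; conversely, for any idempotent $e\notin\p$ one has $1-e\in\ida$, and $\VV(\ida)$ meets the clopen set $\{\q\mid e\notin\q\}$ exactly where $\p$ does, which gives the other. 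Since the connected component $C(\p)$ is always contained in the quasi-component, we get $C(\p)\subset\VV(\ida)$, with equality as soon as $\VV(\ida)$ is known to be connected.

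Granting irreducibility of $\VV(\ida)$, both assertions of the lemma are then formal. Irreducible sets are connected, so a connected component that is irreducible is automatically a maximal irreducible set, i.e.\ an irreducible component; conversely any irreducible component, being connected, lies in a connected component which is itself irreducible and hence cannot be strictly larger. Thus connected and irreducible components coincide, and moreover $\VV(\ida)$, being connected and containing $\p$, must equal $C(\p)$. So everything comes down to showing that $\VV(\ida)$ is irreducible.

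To prove this I would pass to the filtration. Every idempotent of $k\{G\}$ lies in some $k[G[i]]$, so writing $E_i$ for the finite set of idempotents of $k[G[i]]$ lying in $\p_i:=\p\cap k[G[i]]$, the ideal $\ida$ is the increasing union of the ideals $(E_i)\subset k[G[i]]$, whence
$$ k\{G\}/\ida \;=\; \varinjlim_i\, k[G[i]]/(E_i). $$
For each fixed $i$ the ring-theoretic computation of the first paragraph identifies $\VV((E_i))$ with the quasi-component of $\p_i$ in $\spec(k[G[i]])$. Since $G[i]$ is an algebraic group of finite type, its connected and irreducible components coincide and are the cosets of the identity component $G[i]^\circ$, so this quasi-component is an irreducible coset of $G[i]^\circ$; hence $B_i:=\big(k[G[i]]/(E_i)\big)_{\mathrm{red}}$ is an integral domain, and $k\{G\}/\ida$ has the same spectrum as $\varinjlim_i B_i$. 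Because a directed union of domains along \emph{injective} transition maps is again a domain, it suffices to check that each $B_i\to B_{i+1}$ is injective.

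This injectivity is the one place where the group structure is essential, and I expect it to be the main obstacle. It amounts to saying that the projection $\pi_{i+1}\colon G[i+1]\twoheadrightarrow G[i]$ restricts to a dominant morphism between the two cosets, i.e.\ carries the generic point of the component of $\p_{i+1}$ to that of $\p_i$. This holds because a surjective homomorphism of algebraic groups carries identity component onto identity component: $\pi_{i+1}(G[i+1]^\circ)$ is a closed connected subgroup of finite index in $G[i]^\circ$, hence clopen, hence all of $G[i]^\circ$ by connectedness; translating by a suitable point shows $\pi_{i+1}$ maps the coset through $\p_{i+1}$ onto the coset through $\p_i$. With this, $\varinjlim_i B_i$ is a domain, so $\VV(\ida)$ is irreducible, and the lemma follows as explained.
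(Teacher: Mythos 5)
Your proof is correct and follows essentially the same route as the paper's: both filter $k\{G\}=\bigcup_{i\geq 0}k[G[i]]$, identify the candidate component levelwise with a connected (hence irreducible) component of the algebraic group $G[i]$, and realize $\VV(\ida)$ as the spectrum of a filtered colimit of rings with prime nilradical --- your $(E_i)$ is exactly the paper's $\idb_i$. One remark: the injectivity of $B_i\to B_{i+1}$, which you single out as the main obstacle, is not actually needed, since a filtered colimit of rings each having prime nilradical has prime nilradical whatever the transition maps are; dropping it also sidesteps the only fragile point in your argument, namely ``translating by a suitable point,'' which presupposes a rational point in the relevant component and need not exist over a non-algebraically-closed $k$ (if you do want the injectivity, use instead that $\pi_{i+1}$ is faithfully flat of finite type, hence open, so the image of the clopen set $C_{i+1}$ is a nonempty open, hence dense, subset of the irreducible set $C_i$).
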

\begin{proof}
	Let us fix a $\s$-closed embedding $G\hookrightarrow\G$ of $G$ into some algebraic group $\G$ and for $i\geq 0$ let $G[i]$ denote the $i$-th order Zariski-closure of $G$ in $\G$. Let $C\subseteq\spec(k\{G\})$ denote a connected component of $G$. Then $C=\VV(\ida)$ for a unique ideal $\ida$ of $k\{G\}$ generated by idempotent elements. (See \cite[Tag 00EB]{stacks-project}.) For every $i\geq 0$, the closure of the image of $C$ under the projection $\spec(k\{G\})\to G[i]$ is connected and equal to $\VV(\ida\cap k[G[i]])\subseteq G[i]$. So $\VV(\ida\cap k[G[i]])\subseteq G[i]$ is contained in a connected component of $G[i]$. Assume that $G[i]$ has $n_i$ connected components. Then $$k[G[i]]=e_{i,1}k[G[i]]\oplus\cdots\oplus e_{i,n_i}k[G[i]]$$ for some primitive idempotent elements $e_{i,1},\ldots,e_{i,n_i}\in k[G[i]]$ and $\VV(\ida\cap k[G[i]])\subseteq \VV(\idb_i)$ where $\idb_i=(e_{i,1},\ldots e_{i,j_i-1}, e_{i,j_i+1},\ldots,e_{i,n_i})\subseteq k[G[i]]$ for a unique $j_i\in \{1,\ldots,n_i\}$.
	We have $\idb_{i+1}\cap k[G[i]]=\idb_i$ for $i\geq 0$ and $\idb:=\cup_{i\geq 0} \idb_i$ is an ideal of $k\{G\}$.
	From $\VV(\ida\cap k[G[i]])\subseteq \VV(\idb_i)$ it follows that $\idb_i$ is contained in the radical of $\ida\cap k[G[i]]$. Since the $e_{i,j}$'s are idempotent this shows that $\idb_i\subseteq\ida$. So $\idb\subseteq\ida$.
	
	Since $k\{G\}/\idb$ may be interpreted as the directed union of the algebras $k[G[i]]/\idb_i\simeq e_{i,j_i}k[G[i]]$ which have a prime nilradical, it is clear that $\VV(\idb)$ is irreducible (and a fortiori connected). As $C=\VV(\ida)\subseteq \VV(\idb)$ it follows from the maximality of $C$ that $\VV(\ida)=\VV(\idb)$. By the uniqueness of $\ida$ we have $\ida=\idb$.
	
	We have thus shown that every connected component of $G$ is irreducible. So the connected and the irreducible components of $G$ coincide.
	The claimed form of the connected component of a prime ideal of $k\{G\}$ follows from the above arguments.
\end{proof}

By Lemma \ref{lemma:connectedcomponentsareirreducible}, the connected components of a $\s$-algebraic group $G$ are in bijection with the minimal prime ideals of $k\{G\}$. Contrary to the case of algebraic groups or differential algebraic groups, a difference algebraic group can have infinitely many components. Moreover, the components need not be open. This is illustrated in the following example. 

\begin{ex} \label{ex: infinitely many components}
	Let $G\leq\Gm$ be the $\s$-algebraic group given by
	$$G(R)=\{g\in R^\times|\ g^2=1\}\leq\Gm(R)$$
	for any $k$-$\s$-algebra $R$. We have $$k\{G\}=k\{y\}/[y^2-1]=k[y,\s(y),\ldots]/(y^2-1,\s(y)^2-1,\ldots).$$ Let us assume that the characteristic of $k$ is not equal to $2$. Then the prime ideals of $k\{G\}$ are in bijection with the set of all sequences $(a_i)_{i\in\mathbb{N}}$ such that $a_i\in\{1,-1\}$.
	Every prime ideal of $k\{G\}$ is its own component. In particular, $G$ has infinitely many components. The open subsets of $\spec(k\{G\})$ are all infinite, thus the components are not open.
\end{ex}
For a $\s$-algebraic group $G$ we define $$\Sigma\colon\spec(k\{G\})\to\spec(k\{G\}),\ \p\mapsto\s^{-1}(\p),$$ where $\s\colon k\{G\}\to k\{G\}$. A connected component $C$ of $G$ is \emph{invariant} if $\Sigma(C)\subseteq C$.
%

\begin{ex}
	The $\s$-algebraic group from Example \ref{ex: infinitely many components} has two invariant connected components, namely the prime ideals corresponding to the sequences $(1,1,\ldots)$ and $(-1,-1,\ldots)$.
\end{ex}

\begin{lemma} \label{lemma: scomponent}
	Let $G$ be a $\s$-algebraic group and $C\subseteq\spec(k\{G\})$ a connected component of $G$. Then $C$ is invariant if and only if $C$ contains a prime $\s$-ideal.
\end{lemma}
\begin{proof}
	Assume first that $C$ is invariant. By Lemma \ref{lemma:connectedcomponentsareirreducible} the connected component $C$ is of the form $C=\VV(\p)$, for some prime ideal $\p$ of $k\{G\}$. Since $C$ is invariant, $\s^{-1}(\p)\in\VV(\p)$, i.e., $\p\subseteq \s^{-1}(\p)$. Thus $\s(\p)\subseteq\p$ and $\p\in C$ is a prime $\s$-ideal.
	
	Conversely, assume that $\p\in C$ is a prime $\s$-ideal. It follows from Lemma \ref{lemma:connectedcomponentsareirreducible} that $C=\VV(E)$, where $E$ is the set of idempotent elements contained in $\p$. Since $\sigma(\p)\subseteq \p$ we have $\s(E)\subseteq E$ and it follows that $\Sigma(C)\subseteq C$.
\end{proof}

\begin{cor} \label{cor: minsprime is minimal}
	Let $k\{G\}$ be a $k$-$\s$-Hopf algebra that is finitely $\s$-generated over $k$. Then a minimal prime $\s$-ideal of $k\{G\}$ is a minimal prime ideal of $k\{G\}$.
\end{cor}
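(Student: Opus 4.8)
The plan is to deduce the corollary from the component description in Lemma~\ref{lemma:connectedcomponentsareirreducible} together with the characterization of $\s$-components in Lemma~\ref{lemma: scomponent}. Let $\p$ be a minimal prime $\s$-ideal of $k\{G\}$. Since in any commutative ring every prime ideal contains a minimal prime ideal (Zorn's lemma applied to descending chains of prime ideals below $\p$), I would first pick a minimal prime ideal $\q\subseteq\p$ of $k\{G\}$. The goal is then to show $\q=\p$, which identifies $\p$ with a minimal prime ideal.

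Next I would pass to the component determined by $\q$. Let $C=\VV(\q)$ be the component of $G$ corresponding to the minimal prime $\q$; by Lemma~\ref{lemma:connectedcomponentsareirreducible} it is irreducible with $\q$ as its unique minimal prime (generic point), and since $\q\subseteq\p$ we have $\p\in C$. Now $\p$ is, by hypothesis, a prime $\s$-ideal lying in $C$, so condition (iv) of Lemma~\ref{lemma: scomponent} (``there exists a prime $\s$-ideal in $C$'') is satisfied for $C$. Since that lemma establishes the equivalence of (iv) with condition (iii) (``the ideal $\q$ is a $\s$-ideal''), I conclude that the minimal prime $\q$ defining $C$ is itself a $\s$-ideal.

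Finally I would invoke minimality. We now have a prime $\s$-ideal $\q$ with $\q\subseteq\p$, and $\p$ is minimal among prime $\s$-ideals, so $\q=\p$. Hence $\p=\q$ is a minimal prime ideal of $k\{G\}$, which is exactly the assertion.

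The only genuinely non-formal step is the middle one: turning the fact that $\p$ is merely \emph{some} $\s$-stable prime inside $C$ into the much sharper statement that the \emph{generic} prime $\q$ of $C$ is $\s$-stable. This is precisely the work already done by Lemma~\ref{lemma: scomponent}, where one passes through the ideal $\ida\subset k\{G\}$ generated by the idempotents of $\q$: a prime $\s$-ideal in $C$ produces, via its reflexive closure, a $\s$-prime $\s$-ideal in $C$; the idempotents generating the associated idempotent ideal are then $\s$-stable, forcing $\ida$ --- and therefore its radical $\q$ --- to be a $\s$-ideal. Once this is granted, the corollary follows purely formally, and no further use of finite $\s$-generation is needed beyond what is already built into the component theory of $\s$-algebraic groups.
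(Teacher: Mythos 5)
Your argument is correct and is essentially the paper's own proof: both pass to the component containing the given minimal prime $\s$-ideal, use Lemma \ref{lemma: scomponent} (the implication (iv)$\Rightarrow$(iii)) to conclude that the generic minimal prime of that component is a $\s$-ideal, and then invoke minimality among prime $\s$-ideals to identify the two. The only differences are the swapped roles of $\p$ and $\q$ and your slightly more explicit justification that a minimal prime exists below the given one.
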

\begin{proof}
	Let $\q\subseteq k\{G\}$ be a minimal prime $\s$-ideal and let $C\subseteq\spec(k\{G\})$ be the connected component that contains $\q$. Then $C=\VV(\p)$ for a minimal prime ideal $\p$ of $k\{G\}$. Since $\q\in C$ it follows from Lemma \ref{lemma: scomponent} that $C$ is invariant and so $\p$ is a $\s$-ideal. Therefore $\q=\p$ by the minimality of $\q$. 
\end{proof}


We will next introduce the group of connected components $\pi_0(G)$ of a $\s$-algebraic group $G$. This is a rather straight forward adaption of the standard construction for algebraic groups. See \cite[Sections 5 and 6]{Waterhouse:IntroductiontoAffineGroupSchemes} or \cite[Section 2.g]{Milne:AlgebraicGroupsTheTheoryOfGroupSchemesOfFiniteTypeOverAField}.
We begin by defining appropriate analogs of \'{e}tale algebras and \'{e}tale algebraic groups.
%

\begin{defi}
	A finitely $\s$-generated $k$-$\s$-algebra $R$ is \emph{$\s$-\'{e}tale} (over $k$) if $R$ is integral over $k$ and a separable $k$-algebra. A $\s$-algebraic group $G$ is \emph{$\s$-\'{e}tale} if $k\{G\}$ is $\s$-\'{e}tale over $k$.
\end{defi}

Thus, a finitely $\s$-generated $k$-$\s$-algebra $R$ is $\s$-\'{e}tale if and only if for every $r\in R$ there exists a separable polynomial $f$ over $k$ with $f(r)=0$. Related notions of \'{e}taleness in difference algebra occur in \cite{Tomasic:ATwistedTheoremOfChebotarev}, \cite{Tomasic:TwistedGaloisStratification} and \cite{TomasicWibmer:Babbit}.


The $\s$-algebraic group from Example \ref{ex: split finite} is $\s$-\'{e}tale. Moreover, if $\G$ is an \'{e}tale algebraic group over $k$, then $[\s]_k\G$ is a $\s$-\'{e}tale $\s$-algebraic group. $\s$-\'{E}tale $\s$-algebraic groups have a rather rigid structure which we will not discuss here. The interested reader is referred to Section 6 of \cite{Wibmer:Habil}.
%

Recall (\cite[Chapter V, \S 6]{Bourbaki:Algebra2}) that a $k$-algebra $A$ is \'{e}tale if $A\otimes_k\overline{k}$ is a finite direct product of copies of $\overline{k}$, where $\overline{k}$ denotes the algebraic closure of $k$. For a $k$-algebra $A$ we let
$\pi_0(A)$
denote the union of all \'{e}tale $k$-subalgebras of $A$. That is, $\pi_0(A)$ consists of all elements of $A$ that annul a separable polynomial over $k$. Then $\pi_0(A)$ is a $k$-subalgebra of $A$. (Cf. Section~6.7 in \cite{Waterhouse:IntroductiontoAffineGroupSchemes}.) 


\begin{prop} \label{prop: pi0}
	Let $G$ be a $\s$-algebraic group. There exists a $\s$-\'{e}tale $\s$-algebraic group $\pi_0(G)$ and a morphism $G\to \pi_0(G)$ of $\s$-algebraic groups satisfying the following universal property: If $G\to H$ is a morphism of $\s$-algebraic groups with $H$ $\s$-\'{e}tale, then there exists a unique morphism $\pi_0(G)\to H$ such that
	\[\xymatrix{
		G \ar[rr] \ar[rd] & & \pi_0(G) \ar@{..>}[ld] \\
		& H &
	}
	\]
	commutes.
\end{prop}
\begin{proof}
	The $k$-subalgebra $\pi_0(k\{G\})$ of $k\{G\}$ is a Hopf subalgebra. (Cf. Section 6.7 in \cite{Waterhouse:IntroductiontoAffineGroupSchemes}.)
	Let $a\in \pi_0(k\{G\})$ and let $f$ be a separable polynomial over $k$ with $f(a)=0$. Let ${\hs f}$ denote the polynomial obtained from $f$ by applying $\s$ to the coefficients. Since $1\in (f,f')$, it follows that also $1\in( {\hs f}, ({\hs f})')$. Thus ${\hs f}$ is separable. Since ${\hs f}(\s(a))=0$, we see that $\s(a)\in\pi_0(k\{G\})$. This shows that $\pi_0(k\{G\})$ is a $k$-$\s$-Hopf subalgebra of $k\{G\}$. Note that $\pi_0(k\{G\})$ is finitely $\s$-generated over $k$ by Theorem \ref{theo: finiteness2}. Let $\pi_0(G)$ denote the $\s$-\'{e}tale $\s$-algebraic group represented by $\pi_0(k\{G\})$ and let $G\to \pi_0(G)$ be the morphism corresponding to the inclusion $\pi_0(k\{G\})\subseteq k\{G\}$ (Remark \ref{rem: equivalence sgroups sHopfalgebras}).
	
	If $G\to H$ is a morphism of $\s$-algebraic groups with $H$ $\s$-\'{e}tale, then the image of the dual map $k\{H\}\to k\{G\}$ consists of elements that annul a separable polynomial. Thus the image lies in $\pi_0(k\{G\})$ and $k\{H\}\to k\{G\}$ factors uniquely through \mbox{$\pi_0(k\{G\})\hookrightarrow k\{G\}$}.
\end{proof}
Of course $\pi_0(G)$ is unique up to a unique isomorphism. 

\begin{defi}
	Let $G$ be a $\s$-algebraic group. The $\s$-\'{e}tale $\s$-algebraic group $\pi_0(G)$ defined by the universal property of Proposition \ref{prop: pi0} is the \emph{group of connected components} of $G$. The kernel $G^o$ of $G\to\pi_0(G)$ is the \emph{identity component} of $G$.
\end{defi}
The next lemma reduces the proof of Theorem \ref{theo: Hrushovskiconj} to $\s$-\'{e}tale \ks-algebras.

\begin{lemma} \label{lemma: correspondence components}
	Let $G$ be a $\s$-algebraic group. There is a one-to-one correspondence between the connected components of $G$ and the connected components of $\pi_0(G)$. Under this bijection invariant connected components correspond to invariant connected components. Moreover, every connected component of $\pi_0(G)$ is a single point.
\end{lemma}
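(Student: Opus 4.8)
The plan is to reduce everything to a single observation: the Hopf algebras $k\{G\}$ and $k\{\pi_0(G)\}$ have exactly the same idempotent elements. Recall from the proof of Theorem \ref{theo: pi0} that $k\{\pi_0(G)\}$ is the $k$-$\s$-Hopf subalgebra $R\subset k\{G\}$ consisting of all elements that annihilate a separable polynomial over $k$. Now any idempotent $e\in k\{G\}$ satisfies $e^2-e=0$, and $X^2-X$ is separable over $k$ in every characteristic (its derivative $2X-1$ is coprime to it). Hence every idempotent of $k\{G\}$ already lies in $R$, and conversely idempotents of $R$ are idempotents of $k\{G\}$; so the Boolean algebras of idempotents of $k\{G\}$ and of $R$ coincide, together with the action of $\s$ on them (since $\s(R)\subset R$ carries idempotents to idempotents, and it is the restriction of $\s$ on $k\{G\}$).

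First I would deduce the bijection of components. By Lemma \ref{lemma:connectedcomponentsareirreducible}, applied both to $G$ and to $\pi_0(G)$, the component containing a prime $\p$ is $\VV(\ida)$, where $\ida$ is the ideal generated by the idempotents lying in $\p$. Assigning to such an idempotent-generated ideal $\ida\subset k\{G\}$ the ideal $\ida\cap R\subset R$ generated by the same idempotents (with inverse $\idb\mapsto\idb\cdot k\{G\}$) then sets up a bijection between the components of $G$ and those of $\pi_0(G)$. Concretely this is the map induced by the canonical morphism $\spec(k\{G\})\to\spec(R)$, $\p\mapsto\p\cap R$ (which is surjective as $k\{G\}$ is faithfully flat over the Hopf subalgebra $R$): a component $C$ of $G$ has connected, hence image inside a unique component $C'$ of $\pi_0(G)$, and since for an idempotent $e\in R$ one has $e\in\p$ iff $e\in\p\cap R$, the set of idempotents contained in a prime is unchanged under $\p\mapsto\p\cap R$, so corresponding components have literally the same associated ideal of idempotents.

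The compatibility with $\s$-components is then immediate. By criterion (vi) of Lemma \ref{lemma: scomponent}, a component $\VV(\ida)$ is a $\s$-component exactly when the set of idempotents contained in $\ida$ is stable under $\s$. Since corresponding components carry the same set of idempotents and the same $\s$-action on them, one is a $\s$-component if and only if the other is. Finally, for the last assertion I would use that $R=k\{\pi_0(G)\}$ is $\s$-\'{e}tale, so every $r\in R$ annihilates a separable polynomial; its minimal polynomial $p$ is then squarefree, whence $k[r]\cong k[X]/(p)$ is a finite product of fields and there is $s\in k[r]$ with $r=r^2s$. Thus $R$ is von Neumann regular: every prime $\p$ is maximal, and for $r\in\p$ the element $e=rs$ is an idempotent in $\p$ with $r=re\in(e)$, so $\p$ is generated by the idempotents it contains. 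By Lemma \ref{lemma:connectedcomponentsareirreducible} the component of $\p$ is therefore $\VV(\p)=\{\p\}$.

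I expect the main obstacle to be the first step: correctly extracting a genuine bijection of connected components from the mere equality of the two Boolean algebras of idempotents, and checking that it is the one induced by $R\hookrightarrow k\{G\}$. Once this is in place, the matching of $\s$-components is purely formal via Lemma \ref{lemma: scomponent}, and the single-point statement is a routine consequence of the von Neumann regularity of $\s$-\'{e}tale algebras.
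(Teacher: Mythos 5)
Your proof is correct and follows essentially the same route as the paper: both arguments rest on the observation that every idempotent of $k\{G\}$ already lies in $k\{\pi_0(G)\}$ and realize the bijection by contracting primes along $k\{\pi_0(G)\}\hookrightarrow k\{G\}$, with the $\s$-statement read off from Lemma \ref{lemma: scomponent}. The only (harmless) divergences are in the bookkeeping: you justify surjectivity via faithful flatness of $k\{G\}$ over its Hopf subalgebra rather than Bourbaki's result on minimal primes, use criterion (vi) instead of (iii) of Lemma \ref{lemma: scomponent}, and derive the single-point claim from von Neumann regularity of a $\s$-\'{e}tale algebra where the paper simply notes that all primes are maximal, hence minimal.
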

\begin{proof}
	Every prime ideal of $k\{\pi_0(G)\}$ is maximal and hence also minimal. This shows that the connected components of $\pi_0(G)$ are points. We identify $k\{\pi_0(G)\}$ with its image $\pi_0(k\{G\})$ in $k\{G\}$. We claim that $\p\mapsto \p\cap k\{\pi_0(G)\}$ is a bijection between the minimal prime ideals of $k\{G\}$ and the (minimal) prime ideals of $k\{\pi_0(G)\}$. Every (minimal) prime ideal of $k\{\pi_0(G)\}$ is of the form $\p\cap k\{\pi_0(G)\}$ for some minimal prime ideal of $k\{G\}$ (\cite[Proposition 16, Chapter II, \S 2.6]{Bourbaki:commutativealgebra}). On the other hand, if $\p$ is a minimal prime ideal of $k\{G\}$, we see, using Lemma \ref{lemma:connectedcomponentsareirreducible}, that
	$\VV(\p)=\VV(\ida)$, where $\ida$ is the ideal of $k\{G\}$ generated by all idempotent elements of $k\{G\}$ contained in $\p$. Thus $\p=\sqrt{\ida}$. Since all idempotent elements of $k\{G\}$ lie in $k\{\pi_0(G)\}$, it follows that $(\ida\cap k\{\pi_0(G)\})=\ida$. Therefore $\sqrt{(\p\cap k\{\pi_0(G)\})}=\p$.
	
	If $\p$ is a $\s$-ideal, then $\p\cap k\{\pi_0(G)\}$ is a $\s$-ideal. Conversely, if $\p'\subseteq k\{\pi_0(G)\}$ is a $\s$-ideal, then $\sqrt{(\p')}\subseteq k\{G\}$ is a $\s$-ideal.
\end{proof}

To show that a $\s$-algebraic group has only finitely many invariant connected components it thus suffices to show that any $\s$-\'{e}tale $\s$-algebraic group has only finitely many invariant components. The latter statement holds indeed more generally:
\begin{lemma} \label{lemma: setalimpliesfinite}
	Let $R$ be a finitely $\s$-generated $k$-$\s$-algebra. If $R$ is $\s$-\'{e}tale, then $R$ has only finitely many prime $\s$-ideals.
\end{lemma}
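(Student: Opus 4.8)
The plan is to first reduce the notion of ``prime $\s$-ideal'' to the more rigid notion of ``$\s$-prime ideal'', and then to count the latter by passing to points over a separable closure. For the reduction, note that since $R$ is integral over the field $k$ it has Krull dimension zero, so every prime ideal is maximal. If $\p$ is a prime $\s$-ideal, then $\s(\p)\subseteq\p$ gives $\p\subseteq\s^{-1}(\p)$; but $\s^{-1}(\p)$ is again a prime ideal (proper, as $1\notin\s^{-1}(\p)$), hence maximal, so $\p=\s^{-1}(\p)$. Thus every prime $\s$-ideal is $\s$-prime, and $L:=R/\p$ is a $\s$-field extension of $k$, finitely $\s$-generated, separably algebraic over $k$, on which $\s$ is injective. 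Writing $R=k\{a_1,\dots,a_n\}$ with $a_i$ a root of a separable $f_i\in k[x]$ of degree $d_i$, the ideal $\p$ is exactly the kernel of the surjective $k$-$\s$-homomorphism $R\to L$, which is determined by the images $\overline{a_i}$, each a root of $f_i$.

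Next I would pass to a separable closure $k^{\mathrm{sep}}$ of $k$, with $\Gamma=\Gal(k^{\mathrm{sep}}/k)$, and set $R_m=k[\s^j(a_i)\mid 0\le j\le m,\ 1\le i\le n]$, so that each $R_m$ is a finite separable $k$-algebra and $R=\bigcup_m R_m$. The finite $\Gamma$-sets $X_m=\Hom_k(R_m,k^{\mathrm{sep}})$ encode the components of $R_m$, and $X=\varprojlim_m X_m=\Hom_k(R,k^{\mathrm{sep}})$ is a profinite $\Gamma$-set with $\spec(R)=X/\Gamma$. The essential structural input is that $R_{m+1}$ is generated as a $k$-algebra by $R_m$ together with $\s(R_m)$; over $k^{\mathrm{sep}}$ this yields a $\Gamma$-equivariant injection $X_{m+1}\hookrightarrow X_m\times{}^{\s}\!X_m$, sending $\theta$ to its restriction to $R_m$ and to $\theta\circ\s|_{R_m}$. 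Every prime $\s$-ideal of $R$ corresponds, via the dictionary above, to a $\Gamma$-orbit in $X$ that is stable under the ``twisted shift'' induced by $\s$.

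The clean nugget I expect to establish is this. For an extension $h$ of $\s$ to a field embedding $k^{\mathrm{sep}}\to k^{\mathrm{sep}}$, let $S_h=\{\theta\in X\mid \theta\circ\s=h\circ\theta\}$; these are exactly the $k$-$\s$-homomorphisms $R\to(k^{\mathrm{sep}},h)$. If $\theta\in S_h$, then $\theta|_{\s(R_m)}$ is forced by $\theta|_{R_m}$ through $\theta(\s(r))=h(\theta(r))$, so under the injection $X_{m+1}\hookrightarrow X_m\times{}^{\s}\!X_m$ the restriction $\theta|_{R_{m+1}}$ is determined by $\theta|_{R_m}$; inductively $\theta$ is determined by $\theta|_{R_0}$. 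Hence $|S_h|\le|X_0|\le\dim_k R_0$, a bound uniform in $h$. Every $\s$-prime arises as $\ker(\theta)$ for some $\theta$ lying in some $S_h$: one embeds $L=R/\p$ into $k^{\mathrm{sep}}$ over $k$ and extends the transported copy of $\s_L$ to an embedding $h$ of $k^{\mathrm{sep}}$.

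The hard part will be to assemble this over the (infinitely many) extensions $h$ into a single finite bound. The obstruction is genuinely present: an elementary example such as $R=\mathbb{Q}\{y\}/[y^2-2]$ with $\s=\mathrm{id}$ on $\mathbb{Q}$ has the two distinct $\s$-primes given by $\s(y)=y$ and by $\s(y)=-y$, and these require incompatible choices of $h$, so no single $h$ (equivalently, no single target $\s$-field) sees all of them. To control this I would track, for a given $\s$-prime, the residue fields $E_m=\kappa(\p\cap R_m)$ and their degrees $\delta_m=[E_m:E_{m-1}]$; the identity $E_{m+1}=E_m\cdot\s(E_m)$ together with $\s(E_{m-1})\subseteq E_m\cap\s(E_m)$ forces $\delta_{m+1}\le\delta_m$, so the sequence is non-increasing and stabilizes (this is the limit degree of $L|k$ in the sense of Section \ref{sec: limit degree} and \cite[Section 4.3]{Levin:difference}). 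The plan is to use this stabilization to show that the tree of $\s$-consistent components has only finitely many infinite branches — once the degree has stabilized along a branch the continuation becomes essentially rigid — and thereby to bound the number of prime $\s$-ideals, the expected sharp bound being $\prod_i\deg f_i$. Turning the per-branch monotonicity into a uniform bound on the number of branches, especially when the limit degree exceeds one, is the delicate point of the argument.
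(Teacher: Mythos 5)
Your opening reduction --- $R$ has Krull dimension zero, so every prime ideal is maximal, hence a prime $\s$-ideal $\p$ satisfies $\p=\s^{-1}(\p)$ and is $\s$-prime --- is exactly the first step of the paper's proof and is correct. But from that point on the proposal is not a proof; it is an explicitly unfinished plan. The per-embedding bound $|S_h|\le\dim_k R_0$ is fine as far as it goes, but it does not bound the number of $\s$-prime ideals: distinct $\s$-primes may require pairwise incompatible extensions $h$ of $\s$ to $k^{\mathrm{sep}}$ (your own example $\mathbb{Q}\{y\}/[y^2-2]$ already shows this), and there are infinitely many such $h$, so a priori infinitely many $\s$-primes could each live in its own $S_h$. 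The step that would close this --- converting the eventual stabilization of the residue degrees $\delta_m$ along each individual branch into a \emph{uniform} bound on the number of $\s$-consistent branches of the tree --- is precisely the step you acknowledge you have not carried out, and nothing in the proposal indicates how to exclude infinitely many branches once the stabilized degree exceeds one. That is a genuine gap, not a routine verification.

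The missing finiteness is available off the shelf, and this is how the paper concludes: by the Ritt--Raudenbush basis theorem (Theorems 2.5.7 and 2.5.11 in \cite{Levin:difference}) a finitely $\s$-generated $k$-$\s$-algebra has only finitely many minimal $\s$-prime ideals (the perfect closure of the zero ideal is a finite intersection of $\s$-prime ideals, and every $\s$-prime contains one of these). Since in the $\s$-\'{e}tale case every prime ideal is minimal, every prime $\s$-ideal is a minimal $\s$-prime ideal, hence there are only finitely many. Note that Ritt--Raudenbush applies only to perfect $\s$-ideals, so your reduction from ``prime $\s$-ideal'' to ``$\s$-prime ideal'' is doing real work --- it is exactly what makes the classical theorem applicable --- but you stopped short of the citation that finishes the argument and instead set out to reprove a special case of it by hand.
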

\begin{proof}
	Since $R$ is $\s$-\'{e}tale, every prime ideal of $R$ is maximal and hence also minimal. If $\p$ is a prime $\s$-ideal of $R$, then $\p\subseteq\s^{-1}(\p)$ and therefore $\p=\s^{-1}(\p)$. So every prime $\s$-ideal of $R$ is reflexive. It follows from the Ritt-Raudenbush basis theorem (cf. Theorems~2.5.11 and 2.5.7 in \cite{Levin:difference}) that a finitely $\s$-generated $k$-$\s$-algebra has only finitely many minimal reflexive prime $\s$-ideals. Since every prime $\s$-ideal of $R$ is a minimal reflexive prime $\s$-ideal of $R$, this implies that $R$ has only finitely many prime $\s$-ideals.
\end{proof}
The main results of this section now follow from the above considerations:

\begin{theo} \label{theo: finiteness of scomponents}
	A $\s$-algebraic group has only finitely many invariant connected components.
\end{theo}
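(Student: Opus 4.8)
The plan is to reduce the statement to the group of components $\pi_0(G)$ and then apply Lemma~\ref{lemma: setalimpliesfinite}. By Lemma~\ref{lemma: correspondence components} there is a one--to--one correspondence between the components of $G$ and those of $\pi_0(G)$ under which $\s$-components correspond to $\s$-components. Hence the number of $\s$-components of $G$ equals the number of $\s$-components of $\pi_0(G)$, and it suffices to prove that $\pi_0(G)$ has only finitely many $\s$-components.

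By construction (Theorem~\ref{theo: pi0}) the group $\pi_0(G)$ is $\s$-\'{e}tale, so $k\{\pi_0(G)\}$ is a finitely $\s$-generated $\s$-\'{e}tale $k$-$\s$-algebra. Again by Lemma~\ref{lemma: correspondence components}, every component of $\pi_0(G)$ consists of a single point $\{\p\}$, where $\p$ is a prime ideal of $k\{\pi_0(G)\}$ that is simultaneously maximal and minimal. For such a one--point component, $\s(\{\p\})=\{\s^{-1}(\p)\}$, so $\{\p\}$ is a $\s$-component exactly when $\s^{-1}(\p)=\p$, i.e.\ when $\p$ is a prime $\s$-ideal (equivalently, applying Lemma~\ref{lemma: scomponent} to the $\s$-algebraic group $\pi_0(G)$, when $\p$ is a $\s$-ideal). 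Thus the $\s$-components of $\pi_0(G)$ are in bijection with the prime $\s$-ideals of $k\{\pi_0(G)\}$.

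Finally, since $k\{\pi_0(G)\}$ is $\s$-\'{e}tale and finitely $\s$-generated, Lemma~\ref{lemma: setalimpliesfinite} shows that it has only finitely many prime $\s$-ideals, which completes the argument; in particular one recovers Theorem~\ref{theo: Hrushovskiconj} upon noting that the $\s$-components of $G$ correspond to the minimal prime $\s$-ideals of $k\{G\}$ (Corollary~\ref{cor: minsprime is minimal}). The assembly here is short; the real content lies in the inputs already in place, namely the construction of the $\s$-\'{e}tale quotient $\pi_0(G)$ and the proof of Lemma~\ref{lemma: setalimpliesfinite}, which ultimately rests on the Ritt--Raudenbush basis theorem for minimal $\s$-prime ideals. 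Accordingly, I expect the conceptual crux to be the reduction of the general case---where $G$ may be non--reduced and have infinitely many components---to the $\s$-\'{e}tale situation, in which the number of prime $\s$-ideals is governed by Ritt--Raudenbush.
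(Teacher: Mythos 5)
Your proof is correct and follows essentially the same route as the paper: reduce to $\pi_0(G)$ via the component correspondence of Lemma \ref{lemma: correspondence components}, then invoke Lemma \ref{lemma: setalimpliesfinite} for the $\s$-\'{e}tale algebra $k\{\pi_0(G)\}$. The extra details you supply (one-point components, the identification of $\s$-components with prime $\s$-ideals) are exactly what the paper's two-line proof delegates to those lemmas.
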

\begin{proof}
	Let $G$ be a $\s$-algebraic group. By Lemma \ref{lemma: correspondence components}, the invariant connected components of $G$ are in bijection with the invariant connected components of $\pi_0(G)$. By Lemma \ref{lemma: setalimpliesfinite} the $\s$-algebraic group $\pi_0(G)$ has only finitely many invariant connected components.
\end{proof}

\begin{proof}[Proof of Theorem \ref{theo: Hrushovskiconj}]
	By assumption $R=k\{G\}$ for a $\s$-algebraic group $G$. By Corollary~\ref{cor: minsprime is minimal} the set of minimal prime $\s$-ideals of $k\{G\}$ equals the set of minimal prime ideals of $k\{G\}$ that are $\s$-ideals. The latter set is finite by Theorem \ref{theo: finiteness of scomponents}.
\end{proof}

%
%

\section{Difference algebraic relations among solutions of linear differential equations}

Difference algebraic relations among solutions of differential equations are ubiquitous in the theory of special functions. A typical example is the recurrence relation $$xJ_{\alpha+2}(x)-2(\alpha+1)J_{\alpha+1}(x)+xJ_\alpha(x)=0$$
satisfied by the Bessel function $J_\alpha(x)$. A Galois theoretic framework to analyze these relations has been developed in \cite{DiVizioHardouinWibmer:DifferenceGaloisTheoryOfLinearDifferentialEquations} and \cite{DiVizioHardouinWibmer:DifferenceAlgebraicRel}. The Galois groups in this Galois theory are difference algebraic groups.

Combing this Galois theoretic approach with our Theorem \ref{theo: finiteness1}, we show that the difference ideal of all difference algebraic relations among the solutions of a linear differential equations is finitely generated.

\subsection{Difference algebras of finite presentation}

Before discussing differential equations and the relations among their solutions, we establish some preparatory results concerning difference algebras of finite presentation. It appears that the notion of a finitely presented difference algebra does not occur in the difference algebra literature. The results we shall need are a rather direct analog of standard results about algebras of finite presentation. However, a crucial difference to note is that, while a finitely generated algebra over a field is automatically finitely presented, a finitely $\s$-generated difference algebra over a difference field need not be finitely $\s$-presented.

As before, $k$ always denotes a $\s$-field.

\begin{defi} \label{defi: finite spresentation}
	A \ks-algebra $R$ is \emph{finitely $\s$-presented} if it is isomorphic to a quotient of a $\s$-polynomial ring over $k$ modulo a finitely $\s$-generated $\s$-ideal. That is, $R\simeq k\{y_1,\ldots,y_n\}/[f_1,\ldots,f_m]$.
\end{defi}

Note that, using Definition \ref{defi: finite spresentation}, Theorem \ref{theo: finiteness1} can be restated as: The coordinate ring $k\{G\}$ of a $\s$-algebraic group $G$ is finitely $\s$-presented.
%
%


\begin{lemma} \label{lemma: finite presentation}
	Let $\psi\colon k\{z_1,\ldots,z_\ell\}\to R$ be a surjective morphism of \ks-algebras from a $\s$-polynomial ring over $k$ onto $R$. If $R$ is finitely $\s$-presented, then the kernel of $\psi$ is a finitely $\s$-generated $\s$-ideal.
\end{lemma}
\begin{proof}
	We identify $R$ with $k\{y_1,\ldots,y_n\}/[f_1,\ldots,f_m]$. Choose $g_i\in k\{y\}$ such that $\psi(z_i)=\overline{g_i}$ for $i=1,\ldots,\ell$.
	Then $R$ is isomorphic to $k\{z,y\}/[z_i-g_i,f_j]$ via $\overline{y_j}\mapsto\overline{y_j}$. The inverse map is given
	by $\overline{z_i}\mapsto\overline{g_i},\ \overline{y_j}\mapsto\overline{y_j}$.
	
	Choose $h_j\in k\{z\}$ such that $\psi(h_j)=\overline{y_j}$ for $j=1,\ldots,n$ and consider the map $\varphi\colon k\{z,y\}\to k\{z\},\ z_i\mapsto z_i,\ y_j\mapsto h_j$.
	Then $\psi\circ \varphi$ is $k\{z,y\}\to R,\ \ z_i\mapsto\overline{g_i},\ y_j\mapsto\overline{y_j}$. As the kernel of $\psi\circ \varphi$ is $[z_i-g_i,f_j]$, it follows that the kernel of $\psi$ is $\s$-generated by $\varphi(z_i-g_i),\varphi(f_j)$.
\end{proof}

\begin{lemma} \label{lemma: finite spresentation and base extension}
	Let $R$ be a \ks-algebra and $k'/k$ a $\s$-field extension. Then $R$ is a finitely $\s$-presented \ks-algebra if and only if $R\otimes_k k'$ is a finitely $\s$-presented $k'$-$\s$-algebra.
\end{lemma}
\begin{proof}
	Clearly $R\otimes_k k'$ is finitely $\s$-presented if $R$ is. So let us assume that $R\otimes_k k'$ is a finitely $\s$-presented $k'$-$\s$-algebra. Then $R\otimes_k k'$ is finitely $\s$-generated over $k'$ and we may choose a finite set of $\s$-generators from $R\subseteq R\otimes_kk'$. Let $\psi\colon k\{y\}\to R$ be the corresponding morphism of \ks-algebras. Because the base change $\psi\otimes_k k'\colon k'\{y\}\to R\otimes_kk'$ is surjective, also $\psi$ must be surjective. 
	
	As $R\otimes_k k'$ is a finitely $\s$-presented $k'$-$\s$-algebra, it follows from Lemma \ref{lemma: finite presentation} that $\ker(\psi\otimes_kk')\subseteq R\otimes_kk'$ is finitely $\s$-generated. As $\ker(\psi\otimes_kk')=\ker(\psi)\otimes_kk'$ we can indeed find a finite set $F\subseteq\ker(\psi)$ that $\s$-generates $\ker(\psi\otimes_kk')$. Then 
	$$[F]\otimes_kk'=\ker(\psi\otimes_kk')=\ker(\psi)\otimes_kk'.$$
	Therefore $\ker(\psi)=[F]$ is finitely $\s$-generated and it follows that $R$ is finitely $\s$-presented.
\end{proof}

\subsection{Difference Galois theory of differential equations}

We briefly recall some of the basic notions from \cite{DiVizioHardouinWibmer:DifferenceGaloisTheoryOfLinearDifferentialEquations}. A \emph{$\de\s$-field} is a field $K$ together with a derivation $\de\colon K\to K$ and an endomorphism $\s\colon K\to K$ such that $\de$ and $\s$ commute up to a factor annulled by the derivation. We will always assume that $K$ is a $\de\s$-field of characteristic zero. A typical example, matching up with the example of the Bessel's function given above, is $K=\mathbb{C}(\alpha,x)$ with $\de=\frac{\partial}{\partial x}$ and $\s(f(\alpha,x))=f(\alpha+1,x)$.
Note that for a $\de\s$-field $K$, the field $k=K^\de=\{a\in K\ |\ \de(a)=0\}$ is a $\s$-field.

A \emph{$\s$-Picard-Vessiot extension} for $\de(y)=Ay$, where $A\in K^{n\times n}$, is a $\de\s$-field extension $L/K$ such that
\begin{itemize}
	\item there exists $Y\in\Gl_n(L)$ with $\de(Y)=AY$ such that $L$ is $\s$-generated by the entries of $Y$ as a $\s$-field extension of $K$ and
	\item $L^\de=K^\de$.
\end{itemize}	
Note that the $K$-$\s$-algebra $S=K\{Y,\frac{1}{\det(Y)}\}$, called a \emph{$\s$-Picard-Vessiot ring} for $\de(y)=Ay$, is also stable under the derivation $\de$. It encodes the difference algebraic relations among the solutions of $\de(y)=Ay$: If $Z$ is an $n\times n$-matrix of $\s$-indeterminates over $K$, then the kernel $\ida $ of $K\{Z,\frac{1}{\det(Z)}\}\to S,\ Z\mapsto Y$ is the difference ideal of all difference algebraic relations among the entries of $Y$ and $K\{Z,\frac{1}{\det(Z)}\}/\ida\simeq S$.

The $\s$-Galois group $G$ of $L/K$ is the functor from $\ksalg$ to $\groups$ that associates to every \ks-algebra $R$, the group of $\de\s$-automorphism of $S\otimes_k R$ over $K\otimes_k R$. (The derivation $\de$ on $R$ is assumed to be trivial.) It is shown in \cite[Prop. 2.5]{DiVizioHardouinWibmer:DifferenceGaloisTheoryOfLinearDifferentialEquations} that $G$ is a $\s$-algebraic group. It measures the difference algebraic relations among the entries of $Y$. We are now prepared to prove the main result of this section:

\begin{theo}
	Let $L/K$ be a $\s$-Picard-Vessiot extension for $\delta(y)=Ay$ and $Y\in\Gl_n(L)$ such that $\de(Y)=AY$. Then the kernel of $K\{Z,\frac{1}{\det(Z)}\}\to L,\ Z\mapsto Y$ is a finitely $\s$-generated $\s$-ideal. 
\end{theo}
\begin{proof}
	Let $z$ be a further $\s$-variable. If the kernel of $K\{Z,z\}\to L,\ Z\mapsto Y,\ z\mapsto \frac{1}{\det(Y)}$ is finitely $\s$-generated, also the kernel of $K\{Z,\frac{1}{\det(Z)}\}\to L,\ Z\mapsto Y$ is finitely $\s$-generated.
	Therefore, by Lemma \ref{lemma: finite presentation}, it suffices to show that the $\s$-Picard-Vessiot ring $S=K\{Y,\frac{1}{\det(Y)}\}$ is a finitely $\s$-presented $K$-$\s$-algebra. The algebraic recast of the torsor theorem (\cite[Lemma 2.7]{DiVizioHardouinWibmer:DifferenceGaloisTheoryOfLinearDifferentialEquations}) tells us that $S\otimes_KS\simeq S\otimes_k k\{G\}$, where $G$ is the $\s$-Galois group of $L/K$.
	
	This isomorphism extends to $L\otimes_KS\simeq L\otimes_k k\{G\}$. By Theorem \ref{theo: finiteness1} the coordinate ring $k\{G\}$ is a finitely $\s$-presented \ks-algebra. Therefore $L\otimes_k k\{G\}\simeq L\otimes_K S$ is a finitely $\s$-presented $L$-$\s$-algebra. It thus follows from Lemma \ref{lemma: finite spresentation and base extension} that $S$ is a finitely $\s$-presented $K$-$\s$-algebra.
\end{proof}


\bibliographystyle{alpha}
\bibliography{bibdata}
\end{document}